\documentclass[12pt, oneside]{amsart}
\usepackage{amsmath, amssymb, amsfonts, mathtools, tikz-cd, comment, geometry, enumitem, hyperref, fancyhdr, stmaryrd, circuitikz, graphicx, scalerel, mathrsfs, xcolor}
\usetikzlibrary{arrows,calc,matrix}
\usepackage[T1]{fontenc}
\usepackage{lmodern}

\usepackage{setspace} 
\newcommand{\Spec}{\operatorname{Spec}} 
\newcommand{\Pic}{\operatorname{Pic}} 
\newcommand{\Ext}{\operatorname{Ext}} 
\newcommand{\NS}{\operatorname{NS}} 
\newcommand{\Gr}{\operatorname{Gr}} 
\newcommand{\rk}{\operatorname{rk}} 
\newcommand{\iso}{\xrightarrow{\smash{\raisebox{-0.5ex}{$\textstyle\sim$}}}} 
\def\RHom{\mathop{\mathrm{RHom}}\nolimits} 
\newcommand{\bwed}{\mathop{\raisebox{0.25ex}{\scalebox{0.95}{$\bigwedge$}}}\nolimits} 
\DeclareMathOperator{\Db}{D\textsuperscript{\rm b}} 
\DeclareMathOperator{\Br}{Br} 
\newcommand{\vv}{\mathbf{v}} 
\newcommand{\Sym}{\operatorname{Sym}} 

\numberwithin{equation}{section}

\theoremstyle{plain}
\newtheorem{theorem}{Theorem}[section]
\newtheorem{lemma}[theorem]{Lemma}
\newtheorem{proposition}[theorem]{Proposition}
\newtheorem{corollary}[theorem]{Corollary}

\newtheorem{theoremx}{Theorem}

\newtheorem{propositionx}[theoremx]{Proposition}

\theoremstyle{definition}
\newtheorem{definition}[theorem]{Definition}
\newtheorem{example}[theorem]{Example}

\newtheorem{assumption}[theorem]{Assumption}

\theoremstyle{remark}
\newtheorem{remark}[theorem]{Remark}

\definecolor{imperial}{rgb}{0.015625, 0.25, 0.4375}
\definecolor{denim}{rgb}{0.08, 0.38, 0.74}
\hypersetup{
    colorlinks=true,
    linkcolor=imperial,
    citecolor=imperial,
    filecolor=magenta,      
    urlcolor=denim
    }
    
\title{Semi-rigid stable sheaves: a criterion and examples}
\author[Alessio Bottini \& Riccardo Carini]{Alessio Bottini \& Riccardo Carini}

\begin{document}
\onehalfspacing


\begin{abstract}
    Inspired by Mukai's work on K3 surfaces, we introduce and study a notion of \emph{semi-rigidity} for stable sheaves on smooth polarised varieties, designed to capture the existence of stable deformations of direct sums. We show that semi-rigidity is detected by the absence of decomposable elements in the kernel of the Yoneda pairing.
    We apply the resulting criterion to line bundles on smooth projective varieties and to line bundles supported on smooth Lagrangian subvarieties of hyper-K\"ahler manifolds. 
\end{abstract}


\maketitle
\addtocontents{toc}{\protect\setcounter{tocdepth}{-1}}


\thispagestyle{empty}
\section*{Introduction}

Let $\mathscr F$ be a stable sheaf on a smooth polarised variety $(X,h)$, with Mukai vector $\mathbf v\in H^\ast(X,\mathbb Q)$. Denote by $M_{\mathbf v}(X,h)$ the associated good moduli space, whose closed points parametrise polystable sheaves. For every $n\ge 2$, taking direct sums induces a morphism
    \begin{equation}\label{eq:intro:direct-sum}
        \Sym^n M_{\mathbf v}(X,h)\to M_{n\mathbf v}(X,h),\quad \sum_{i=1}^n [\mathscr{F}_i]\mapsto [\mathscr{F}_1\oplus\cdots\oplus \mathscr{F}_n].
    \end{equation}
The guiding problem of this paper is to describe the local geometry of this morphism around the point $[\mathscr F^{\oplus n}] \in  M_{n\mathbf v}(X,h)$, and to determine when stable sheaves arise in its neighbourhood. 

Our approach is deformation-theoretic. We work under the assumption that the differential graded algebra $\RHom^\ast(\mathscr F,\mathscr F)$ is formal and the deformation space $\mathrm{Def}_{\mathscr F}$ is smooth. Under these hypotheses, the deformation theory is governed by the skew-symmetric Yoneda pairing
    \begin{equation*}
        \Upsilon_{\mathscr{F}}:\bwed^2\Ext^1(\mathscr F,\mathscr F)\to \Ext^2(\mathscr F,\mathscr F), \quad e_1\wedge e_2\mapsto e_1\cup e_2.
    \end{equation*}

We begin with the case $n=2$. Under the above assumptions, we say that $\mathscr F$ is \emph{semi-rigid} if every Jordan--H\"older factor of any sufficiently small deformation of $\mathscr{F}\oplus \mathscr{F}$ is a deformation of $\mathscr{F}$. Equivalently, the direct-sum morphism \eqref{eq:intro:direct-sum} is locally surjective at $2[\mathscr{F}]\in \Sym^2 M_{\vv}(X,h)$ onto a neighbourhood of $[\mathscr{F}\oplus \mathscr{F}]\in M_{2\mathbf v}(X,h)$. In particular, $\mathscr{F}\oplus \mathscr{F}$ has no stable deformations.

\begin{theoremx}\label{thm:intro:semirigid}
    A stable sheaf $\mathscr F$ is semi-rigid if and only if $\ker(\Upsilon_{\mathscr{F}})\subseteq \bwed^2 \Ext^1(\mathscr{F}, \mathscr{F})$ contains no non-zero decomposable element.
\end{theoremx}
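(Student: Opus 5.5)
The proof runs through the local model of $M_{2\mathbf v}(X,h)$ at $[\mathscr F\oplus\mathscr F]$ given by formality. Write $E=\Ext^1(\mathscr F,\mathscr F)$ and $V=\Ext^2(\mathscr F,\mathscr F)$. Since $\RHom^\ast(\mathscr F^{\oplus 2},\mathscr F^{\oplus 2})\cong \RHom^\ast(\mathscr F,\mathscr F)\otimes \mathfrak{gl}_2$ is again formal, the Kuranishi-type local description says the following. A neighbourhood of $[\mathscr F\oplus\mathscr F]$ in $M_{2\mathbf v}$ is analytically (or étale) locally isomorphic to a neighbourhood of $0$ in the quadratic cone $\{A\in E\otimes\mathfrak{gl}_2 : A\cup A=0\}/\!\!/ GL_2$, where $A\cup A\in V\otimes\mathfrak{gl}_2$. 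Smoothness of $\mathrm{Def}_{\mathscr F}$ means $\Upsilon_{\mathscr F}$ vanishes on symmetric squares, so $e\cup e=0$ for all $e\in E$. Under this model, the image of $\Sym^2 M_{\mathbf v}$ corresponds to the locus of $GL_2$-orbits of diagonalisable (up to closure) matrices $A=\mathrm{diag}(a,b)$ with $a,b\in E$. Semi-rigidity is then equivalent to the following statement: every closed $GL_2$-orbit in the quadratic cone, near $0$, is an orbit of diagonal matrices. Equivalently, no point of the cone has closed orbit with stabiliser only the scalars, since such points give stable sheaves, and the strictly polystable ones are exactly the diagonal ones.

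Next I make the quadratic equation explicit. Write $A=\sum_{i} e_i\otimes M_i$ with $M_i\in\mathfrak{gl}_2$. Then $A\cup A=\sum_{i,j}(e_i\cup e_j)\otimes M_iM_j$. Using skew-symmetry of the cup product on $E$, this equals $\sum_{i<j}(e_i\cup e_j)\otimes[M_i,M_j]$. So $A\cup A=0$ exactly when $\Upsilon_{\mathscr F}$ kills the relevant element of $\bwed^2E$ paired against commutators. Choose the basis $1,H,X,Y$ of $\mathfrak{gl}_2$ and write $A=a\otimes 1+h\otimes H+x\otimes X+y\otimes Y$. The scalar part $a$ drops out of every commutator. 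The equation becomes
\[
\Upsilon(x\wedge y)\otimes H+2\Upsilon(h\wedge x)\otimes X-2\Upsilon(h\wedge y)\otimes Y=0,
\]
that is, $\Upsilon(x\wedge y)=\Upsilon(h\wedge x)=\Upsilon(h\wedge y)=0$.

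Both directions now follow from this description.

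\emph{Failure of the criterion gives a non-semi-rigid sheaf.} Suppose $0\neq x\wedge y\in\ker\Upsilon$, so $x,y$ are linearly independent. Take $A_t=t(x\otimes X+y\otimes Y)$ with $h=0$; all three equations hold. The pair of matrices $(X,Y)$ generates $\mathfrak{gl}_2$ as an algebra and has no common invariant line. By King's criterion, or the standard invariant-theory description of $\mathfrak{gl}_2$-valued quiver representations, the orbit of $A_t$ is closed with trivial projective stabiliser. It therefore gives stable sheaves arbitrarily close to $\mathscr F\oplus\mathscr F$, and $\mathscr F$ is not semi-rigid.

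\emph{The criterion gives semi-rigidity.} Conversely, suppose $\ker\Upsilon$ contains no nonzero decomposable element, and let $A$ lie on the cone with closed orbit. The three equations force $x\wedge y=h\wedge x=h\wedge y=0$. Hence $h,x,y$ all lie on a common line $\langle e\rangle$, so $A=a\otimes 1+e\otimes M$ for a single matrix $M$. An orbit of this form is closed only if $M$ is semisimple, so after conjugation $M$ is diagonal and $A$ is diagonal. Thus all nearby polystable points are direct sums of two deformations of $\mathscr F$, which is semi-rigidity.

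The main obstacle is the first step: justifying rigorously that the local structure of $M_{2\mathbf v}$ at $[\mathscr F^{\oplus2}]$ is the $GL_2$-quotient of this quadratic cone. This requires formality of the tensored dg algebra and a Luna-étale-slice argument for the good moduli space, with stabiliser $GL_2=\mathrm{Aut}(\mathscr F^{\oplus2})$. It also requires checking that stability and the Jordan--Hölder factors match closed orbits and their stabilisers in the model. I would take this from the standard Kuranishi/étale-slice results available for formal dg algebras, for instance via Arbarello--Saccà or Budur--Zhang. I would then verify separately that the direct-sum map is identified with the inclusion of the diagonal locus.
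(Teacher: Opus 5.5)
Your proof is correct, and it takes a somewhat different path from the paper's. (The paper writes $V$ for your $E=\Ext^1(\mathscr F,\mathscr F)$.)

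The paper recasts semi-rigidity as surjectivity of the closed embedding $\mathfrak{C}(\mathfrak{gl}_2,E)/\!/\mathrm{GL}_2\to\mu^{-1}(0)/\!/\mathrm{GL}_2$, via Lemma~\ref{lem:local-structure-direct-sum}, which relies on the Chevalley--Vaccarino isomorphism of Theorem~\ref{thm:Vaccarino-Chevalley}. For ``no decomposable element $\Rightarrow$ semi-rigid'' it argues contrapositively. Given $\alpha\in\mu^{-1}(0)$ with $\chi(\alpha)\neq0$, it contracts $\chi(\alpha)$ with some $H\in\mathfrak{sl}_2$ via the trace form. It then bounds the rank of the resulting element of $\ker(\Upsilon_{\mathscr F})$ by $\dim\mathcal{O}_H=2$. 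For the other implication it uses the invariant function $\operatorname{tr}\langle\phi,\chi(-)\rangle^k$, which is non-zero at $e_1\otimes X+e_2\otimes Y$ but vanishes on the commuting scheme.

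You work with closed orbits instead. Your expansion in the basis $\{1,H,X,Y\}$ shows that the three $\mathfrak{sl}_2$-components of $\chi(A)$ are themselves decomposable elements of $\ker(\Upsilon_{\mathscr F})$. The paper's trace contractions are linear combinations of these, all lying in $\bwed^2\langle h,x,y\rangle$. Under the hypothesis, every point of the cone therefore has the form $a\otimes 1+e\otimes M$, and every closed orbit is diagonal; for $n=2$ no Chevalley-type theorem is needed. For the converse, the simplicity of $(X,Y)$ produces stable points. This is exactly the $n=2$ case of the paper's proof of Theorem~\ref{thm:intro:split}(ii). For trivial character, King's criterion is Artin's semisimplicity criterion, which is what the paper cites.

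Your route is more elementary and gives the stronger conclusion that stable sheaves occur arbitrarily close to $[\mathscr F\oplus\mathscr F]$. It needs two inputs: the compatibility of the charts with direct sums (Remark~\ref{rmk:compatibility-direct sum}), which the paper's Lemma~\ref{lem:local-structure-direct-sum} also rests on, and the dictionary between stable sheaves and closed orbits with scalar stabiliser (Remark~\ref{rmk:local-structure-stable-locus}). The paper's commuting-scheme set-up is built to be reused for all $n$: it yields the closed-embedding statement that feeds Theorem~\ref{thm:split} and Proposition~\ref{prop:ker-Upsilon-zero}. Its rank estimate, however, is specific to $n=2$, just like your coordinate computation.
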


Our definition of semi-rigid is inspired by Mukai's terminology for stable sheaves on symplectic surfaces \cite{Mukai1987ModuliBundlesK3}. When $(X,h)$ is a symplectic surface, $\vv\in H^\ast(X, \mathbb{Z})$ is primitive, and $h\in \mathrm{Amp}(X)$ is generic, the direct-sum morphism \eqref{eq:intro:direct-sum} is surjective (indeed, an isomorphism) onto $M_{2\mathbf{v}}(X,h)$ if and only if $\vv^2=0$ \cite[Prop.\@\  3.13]{Mukai1987ModuliBundlesK3}. In this case $\Upsilon_{\mathscr{F}}$ is an isomorphism, the moduli space $M_{\vv}(X,h)$ is also a symplectic surface and stable sheaves with Mukai vector $\vv$ \emph{behave like points} on $X$---if $M_{\vv}(X,h)$ admits a universal family, it induces a derived equivalence between $X$ and $M_{\vv}(X,h)$. Skyscraper sheaves are in fact the prototypical example of semi-rigid stable sheaves, as there is an isomorphism of graded algebras $\Ext^\ast(\mathscr{O}_x, \mathscr{O}_x)\cong \bwed^\ast T_x X$ and hence $\ker(\Upsilon_{\mathscr{O}_x})=0$. 

Our next result explains why our notion of semi-rigidity remains useful beyond $n=2$. The key point is that a decomposable element in $\ker(\Upsilon_{\mathscr{F}})$ produces, for every $n\ge 2$, stable points in the affine quadratic local model governing deformations of $\mathscr F^{\oplus n}$, which in turn give rise to stable deformations of $\mathscr F^{\oplus n}$. As a partial converse, we can also show that semi-rigidity allows us to single out an irreducible component through $[\mathscr F^{\oplus n}]$:

\begin{theoremx}\label{thm:intro:split} 
    Let $\mathscr{F}$ be a stable sheaf on $(X,h)$. 
        \begin{enumerate}[label={\upshape(\roman*)}]  
            \item If $\mathscr F$ is semi-rigid, then for every $n\ge 2$ the direct-sum morphism \eqref{eq:intro:direct-sum} is the embedding of an irreducible component in a neighbourhood of $[\mathscr F^{\oplus n}]$.
            \item If $\mathscr{F}$ is not semi-rigid, then for all $n\geq 2$ any neighbourhood of $[\mathscr{F}^{\oplus n}]$ in $M_{n\vv}(X,h)$ contains a stable sheaf.
        \end{enumerate}
\end{theoremx}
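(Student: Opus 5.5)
Under the standing hypotheses (formality of $\RHom^\ast(\mathscr F,\mathscr F)$ and smoothness of $\mathrm{Def}_{\mathscr F}$), I would reduce both parts to a statement about an affine quadratic cone with a reductive group action. Write $V=\Ext^1(\mathscr F,\mathscr F)$, $W=\Ext^2(\mathscr F,\mathscr F)$, and $\mathfrak g=\mathfrak{gl}_n$. Formality of $\RHom^\ast(\mathscr F^{\oplus n},\mathscr F^{\oplus n})=\RHom^\ast(\mathscr F,\mathscr F)\otimes \mathfrak{gl}_n$ means $\mathscr F^{\oplus n}$ has a formal DGA as well. By the local description of good moduli spaces (Luna étale slice / Kuranishi model), a neighbourhood of $[\mathscr F^{\oplus n}]$ in $M_{n\vv}(X,h)$ is then analytically (or étale) locally isomorphic to a neighbourhood of $0$ in $Q_n/\!\!/GL_n$. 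Here
\begin{equation*}
Q_n=\{A\in V\otimes\mathfrak g \,:\, \mu(A)=\tfrac12[A,A]=0\}\subseteq V\otimes \mathfrak{gl}_n,
\end{equation*}
and the bracket uses $\Upsilon_{\mathscr F}$ on the $V$-factor and the commutator on $\mathfrak{gl}_n$. Concretely, writing $A=\sum_i e_i\otimes A_i$, the equation reads $\sum_{i<j}\Upsilon(e_i\wedge e_j)[A_i,A_j]=0$. Under this dictionary, stable sheaves near $[\mathscr F^{\oplus n}]$ correspond to $GL_n$-orbits in $Q_n$ with closed orbit and stabiliser the scalars, i.e.\ simple representations. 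The image of the direct-sum map corresponds to the locus of commuting tuples, namely the diagonalisable part $(V\otimes\mathfrak t)/\!\!/S_n=\Sym^n V$.

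For (ii), take a nonzero decomposable $e_1\wedge e_2\in\ker\Upsilon$ and extend $e_1,e_2$ to a basis. I would look at $A=e_1\otimes X+e_2\otimes Y$ with $X,Y\in\mathfrak{gl}_n$ arbitrary. Every term of $\mu(A)$ is a multiple of $\Upsilon(e_1\wedge e_2)=0$, so $A\in Q_n$ for all $X,Y$. Choosing $X,Y$ to generate $\mathfrak{gl}_n$ as an algebra (for instance a diagonal matrix with distinct entries and a cyclic permutation matrix) gives a simple pair, hence a closed orbit with scalar stabiliser. Scaling $A\mapsto tA$ with $t\to 0$ puts these points in any neighbourhood of $0$, which produces stable sheaves in any neighbourhood of $[\mathscr F^{\oplus n}]$. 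This also proves the ``only if'' direction of Theorem~\ref{thm:intro:semirigid}, which I take as the definition at $n=2$.

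For (i), assume $\ker\Upsilon$ contains no decomposable vectors. I would show that the linear subspace $V\otimes\mathfrak t$, with $\mathfrak t$ the diagonal torus, gives an irreducible component of $Q_n$ up to conjugation. Concretely, I would prove that the tangent-cone obstruction vanishes: at a generic point $D=\sum e_i\otimes D_i$ with $D$ regular semisimple (distinct eigenvalue vectors $\lambda_1,\dots,\lambda_n\in V$), decompose a tangent vector $B\in V\otimes\mathfrak g$ into diagonal and off-diagonal parts. The linearised equation $[D,B]=0$ on the $(k,l)$ off-diagonal entry reads $\Upsilon((\lambda_k-\lambda_l)\wedge b_{kl})=0$. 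Since $\lambda_k-\lambda_l\neq0$ and the kernel has no decomposables, this forces $b_{kl}\in\mathbb C(\lambda_k-\lambda_l)$. Those directions are exactly the tangent directions of the $GL_n$-orbit, since $[D,E_{kl}]=(\lambda_l-\lambda_k)\otimes E_{kl}$ up to sign. Hence the Zariski tangent space of $Q_n$ at $D$ equals the tangent space of $GL_n\cdot(V\otimes\mathfrak t)$. So $Q_n$ is smooth there of that dimension, and $\overline{GL_n\cdot(V\otimes\mathfrak t)}$ is an irreducible component. Passing to quotients, $\Sym^n V=(V\otimes\mathfrak t)/S_n\to Q_n/\!\!/GL_n$ is a closed embedding onto a component. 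I would check injectivity and closed-embedding-ness via traces of words, which separate semisimple orbits. For $n=2$ I would strengthen this: every point of $Q_2$ has semisimplification a pair of commuting tuples. The argument is to take a closed non-diagonalisable orbit, i.e.\ a simple $2\times2$ representation, and project to $\mathfrak{sl}_2$. The equation becomes $\sum\Upsilon(e_i\wedge e_j)[A_i,A_j]=0$ with $[\,\cdot,\cdot\,]$ in the 3-dimensional $\mathfrak{sl}_2$. Rewriting via $\mathfrak{sl}_2\cong\bwed^2\mathfrak{sl}_2$, the element $\sum_{i<j}(e_i\wedge e_j)\otimes[A_i,A_j]$ becomes an element of $\ker\Upsilon\otimes\mathfrak{sl}_2$ coming from a rank-$\le3$ map $\mathfrak{sl}_2^\vee\to V$. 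I would then extract a decomposable vector from a simple solution; this gives the ``if'' direction of Theorem~\ref{thm:intro:semirigid}.

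The main obstacle is that last extraction, and more precisely its interplay with (i): the irreducible-component statement only needs the tangent computation at generic diagonal points, which is clean. I must make sure, however, that the local model $Q_n/\!\!/GL_n$ genuinely computes $M_{n\vv}$ near $[\mathscr F^{\oplus n}]$, including the identification of the direct-sum map with $\Sym^nV\to Q_n/\!\!/GL_n$. I would cite the formality-based Kuranishi/Luna slice results presumably established earlier in the paper for this.
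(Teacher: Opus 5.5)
Your proof is correct. Part (ii) is essentially the paper's argument, while part (i) takes a genuinely different route.

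\textbf{Part (ii).} The paper takes $X,Y$ from an $\mathfrak{sl}_2$-triple with $X$ regular nilpotent, whereas you take a diagonal matrix with distinct entries and a cyclic permutation. In both cases $\mu(\alpha)=\Upsilon_{\mathscr F}(e_1\wedge e_2)\otimes[X,Y]=0$ and the module is simple, so the orbit is closed with scalar stabiliser, and rescaling finishes. Both versions start from a non-zero decomposable element of $\ker(\Upsilon_{\mathscr F})$. So both need the implication ``not semi-rigid $\Rightarrow$ decomposable element in the kernel'' from Theorem~\ref{thm:semirigid}. This is exactly the ``extraction'' you flag as your main obstacle, and you should simply cite that theorem, as the paper does. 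For the record, the paper's extraction runs as follows. Given $\alpha=(A_i)\in\mu^{-1}(0)$ with $\chi(\alpha)\neq0$, choose $H\in\mathfrak{sl}_2$ with $\omega\coloneqq\sum_{i<j}\operatorname{tr}([A_i,A_j]H)\,e_i\wedge e_j\neq0$. Then $\omega\in\ker(\Upsilon_{\mathscr F})$ because $\mu(\alpha)=0$. Since $\operatorname{tr}([A_i,A_j]H)=\operatorname{tr}(A_i[A_j,H])$, the rank of $\omega$ is at most $\dim[\mathfrak{gl}_2,H]=2$, so $\omega$ is decomposable.

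\textbf{Part (i): the paper's route.} The paper uses semi-rigidity in the strong form $\mathfrak{C}(\mathfrak{gl}_2,V)=\mu^{-1}(0)$ as sets, which itself relies on the extraction above. It then gets a coarse moduli space off the diagonal (Proposition~\ref{prop:geometric-quotient-away-diagonal}), and hence $\Ext^1(\mathscr F_1,\mathscr F_2)=0$ for nearby non-isomorphic stable sheaves (Proposition~\ref{prop:generic-vanishing}). Finally, at a generic polystable $\mathscr G=\bigoplus_i\mathscr F_i$ the map $\prod_i\mathrm{Def}_{\mathscr F_i}\to\mathrm{Def}_{\mathscr G}$ is an isomorphism on tangent spaces.

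\textbf{Part (i): your route.} You stay in the single model $\mu^{-1}(0)\subseteq V\otimes\mathfrak{gl}_n$ and compute its Zariski tangent space at a regular diagonal point $D$. Your condition $\Upsilon_{\mathscr F}((\lambda_k-\lambda_l)\wedge b_{kl})=0$ is precisely the infinitesimal form of the paper's vanishing of $\Ext^1(\mathscr F_l,\mathscr F_k)$. The count $nd+n^2-n$ matches $\dim G\times^N(V\otimes\mathfrak t)_{\mathrm{reg}}$, so $\mu^{-1}(0)$ is smooth at $D$ and $Z_1\coloneqq\overline{G\cdot(V\otimes\mathfrak t)}$ is a component.

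\textbf{What each route buys.} Yours needs only the easy direction of Theorem~\ref{thm:semirigid}, which your own $n=2$ construction supplies, and it bypasses both the coarse-moduli detour and the $\mathfrak{sl}_2$ contraction. The paper's route yields the generic-vanishing statement along the way. It also proves the component property directly on $M_{n\vv}(X,h)$.

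\textbf{Two steps to spell out.} First, to pass from $Z_1$ being a component of $\mu^{-1}(0)$ to its image being a component of $\mu^{-1}(0)/\!/G$, use that $G\cdot D$ is closed and $D$ lies on no other component $Z'$. Any $z\in Z'$ in the fibre of $D$ would have $G\cdot D\subseteq\overline{G\cdot z}\subseteq Z'$, so the image of $D$ avoids the image of $Z'$. Second, ``traces of words separate semisimple orbits'' only gives injectivity. For the closed embedding, either cite Lemma~\ref{lem:local-structure-direct-sum} or add that the polarised power sums generate the multisymmetric functions.
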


Note that even if $\mathscr{F}$ is semi-rigid, we cannot rule out the existence of stable deformations of $\mathscr{F}^{\oplus n}$ lying on irreducible components other than the \emph{split component} singled out above, unless we assume that $M_{n\vv}(X,h)$ is unibranch at $[\mathscr{F}^{\oplus n}]$. This holds, for instance, under the stronger assumption $\ker(\Upsilon_{\mathscr{F}})=0$, see Proposition \ref{prop:ker-Upsilon-zero}.

\subsection*{Semi-rigid line bundles} 
The semi-rigidity condition can be difficult to check in practice.  However, in some cases, the characterisation of Theorem \ref{thm:intro:semirigid} has a geometric interpretation. The first instance where this happens is the case of line bundles on smooth projective varieties: via the classical Castelnuovo--de Franchis theorem---more precisely, Catanese's topological refinement \cite[Thm.\@ 1.10]{Catanese1991}---we can relate the existence of decomposables to the existence of irrational pencils.

\begin{propositionx}\label{intro:prop-CdF}
    Let $X$ be a smooth projective variety. The following are equivalent:
        \begin{enumerate}[label={\upshape(\roman*)}] 
            \item Line bundles on $X$ are semi-rigid.
            \item For any linearly independent $\eta_1, \eta_2\in H^1(X, \mathbb{C})$, we have $\eta_1\cup \eta_2\neq 0\in H^2(X, \mathbb{C})$.
            \item Every morphism $X\to C$ to a curve $C$ of genus $g(C)\geq 2$ is constant.  
        \end{enumerate}
\end{propositionx}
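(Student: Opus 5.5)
We reduce everything to Theorem \ref{thm:intro:semirigid}, whose hypotheses we first check for a line bundle $L$ on $X$. We have $\Ext^\ast(L,L)\cong H^\ast(X,\mathscr{O}_X)$ as graded algebras. The dg algebra $\RHom^\ast(L,L)$ is quasi-isomorphic to the Dolbeault algebra $(A^{0,\ast}(X),\bar\partial)$. This algebra is formal by the $\partial\bar\partial$-lemma for compact Kähler manifolds: harmonic $(0,q)$-forms are conjugates of holomorphic forms, hence closed, so they form a subalgebra mapping quasi-isomorphically. The deformation space is smooth because $\Pic^0(X)$ is smooth; equivalently, the cup product $H^1(\mathscr{O}_X)\otimes H^1(\mathscr{O}_X)\to H^2(\mathscr{O}_X)$ gives unobstructed deformations, since all obstructions vanish for $\Pic$. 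The Yoneda pairing $\Upsilon_L$ is then the cup product $\bwed^2 H^1(X,\mathscr{O}_X)\to H^2(X,\mathscr{O}_X)$, independently of $L$. So (i) holds if and only if no nonzero decomposable $\alpha\wedge\beta$, with $\alpha,\beta\in H^{0,1}$ independent, has $\alpha\cup\beta=0$ in $H^{0,2}$.

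For (i)$\Leftrightarrow$(ii) we use Hodge theory. For (ii)$\Rightarrow$(i), take independent $\alpha,\beta\in H^{0,1}\subset H^1(X,\mathbb{C})$. By (ii), $\alpha\cup\beta\neq0$ in $H^2(X,\mathbb{C})$. This class lies in $H^{0,2}$, and the Hodge decomposition is compatible with cup product, so it is nonzero in $H^{0,2}=H^2(\mathscr{O}_X)$. For (i)$\Rightarrow$(ii), suppose $\eta_1\wedge\eta_2\neq0$ with $\eta_1\cup\eta_2=0$. Here I would invoke Catanese's refinement \cite[Thm.\@ 1.10]{Catanese1991}: a maximal isotropic subspace of $H^1(X,\mathbb{C})$ of dimension at least $2$ that is not already accounted for yields a fibration $f\colon X\to C$ with $g(C)\ge2$. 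This gives (ii)$\Leftarrow$(iii), or rather not-(ii)$\Rightarrow$not-(iii). Then from such an $f$ we produce the violation of (i). Take independent $\bar\omega_1,\bar\omega_2\in H^{0,1}(C)$ and pull them back along $f$. They remain independent, since $f^\ast$ is injective on $H^1$ for a surjective map onto a curve. Their product vanishes because it factors through $H^{0,2}(C)=0$. This gives not-(i).

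It remains to show (iii)$\Rightarrow$(ii) and not-(i)$\Rightarrow$ not-(iii). Both are instances of Castelnuovo--de Franchis. Given independent $\alpha,\beta\in H^{0,1}$ with $\alpha\cup\beta=0$, the conjugates $\bar\alpha,\bar\beta$ are holomorphic $1$-forms with $\bar\alpha\wedge\bar\beta$ a holomorphic $2$-form whose class vanishes. Such a form is $0$ on compact Kähler manifolds. Classical Castelnuovo--de Franchis then gives a nonconstant map to a curve of genus $\ge2$. The topological version handles (iii)$\Rightarrow$(ii) directly.

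The cycle of implications is therefore: (ii)$\Rightarrow$(i) by Hodge theory; (i)$\Rightarrow$(iii) by pulling back forms from $C$; and (iii)$\Rightarrow$(ii) by Catanese's theorem. I expect the main obstacle to be this last step. It requires stating Catanese's theorem precisely, namely that a nondegenerate isotropic pair in $H^1(X,\mathbb{C})$ gives an irrational pencil, possibly after checking the non-degeneracy condition, which is automatic for a pair. A secondary point needing care is the formality and smoothness verification in the first paragraph, which is what legitimately places line bundles within the scope of Theorem \ref{thm:intro:semirigid}.
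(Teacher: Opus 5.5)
Your proposal is correct and follows essentially the same route as the paper: you use Theorem~\ref{thm:intro:semirigid} to reduce (i) to the absence of nonzero decomposables in the kernel of the cup product $\bwed^2 H^1(X,\mathscr{O}_X)\to H^2(X,\mathscr{O}_X)$, and then rely on Castelnuovo--de Franchis and Catanese's Theorem~1.10 for the topological condition. The only difference is in how the implications are arranged. Your cycle (ii)$\Rightarrow$(i)$\Rightarrow$(iii)$\Rightarrow$(ii) handles the first two steps by elementary Hodge-theoretic and pullback arguments, so Catanese is needed only for (iii)$\Rightarrow$(ii); the paper instead proves (i)$\Leftrightarrow$(iii) by conjugation plus Castelnuovo--de Franchis and cites Catanese for (ii)$\Leftrightarrow$(iii).
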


In particular, any \emph{Albanese primitive} projective variety in the sense of \cite{Catanese1991}---those admitting, more generally, no \emph{higher} irrational pencils---satisfies these conditions. This includes all regular varieties, namely those with $q(X)=0$: in this case line bundles are rigid. There are, however, many irregular Albanese primitive varieties: these include, for instance, abelian varieties and a plethora of examples arising from the geography of irregular surfaces of general type \cite{LopesPardini2012GeographySurfaces}.

\subsection*{Semi-rigid Lagrangian line bundles on hyper-K\"ahler manifolds} 
Condition $\mathrm{(ii)}$ in Proposition \ref{intro:prop-CdF} becomes particularly suggestive in the hyper-K\"ahler setting, which motivates much of this paper. Although deformations of a line bundle $\mathscr{L}$ on $X$ are governed by the $(0,\ast)$-Dolbeault cohomology algebra $H^{0,\ast}(X)\coloneqq H^\ast(X,\mathscr{O}_X)$, Proposition \ref{intro:prop-CdF} shows that semi-rigidity is equivalently encoded by the cup product structure on the Betti cohomology algebra $H^\ast(X,\mathbb C)$.

Now let $i\colon Z\hookrightarrow X$ be a smooth Lagrangian embedding into a hyper-K\"ahler manifold $X$.
For any line bundle $\mathscr{L}$ on $Z$, the torsion sheaf $i_\ast  \mathscr{L}$ on $X$ is stable for every polarisation and the deformation space $\mathrm{Def}_{i_\ast  \mathscr{L}}$ is smooth and symplectic \cite[Thm.\@ 8.1]{DonagiMarkman1996Spectral}.
In favourable situations---namely, when $\RHom^\ast(i_\ast  \mathscr{L},i_\ast  \mathscr{L})$ is formal and the local-to-global spectral sequence degenerates---the deformation theory of $i_\ast  \mathscr{L}$ is essentially governed by the Betti cohomology of $Z$.
More precisely, one obtains a multiplicative filtration $F^\ast$ on $\Ext^\ast(i_\ast  \mathscr{L},i_\ast  \mathscr{L})$ whose associated graded satisfies
\begin{equation*}
    \mathrm{gr}_{F} \Ext^\ast(i_\ast  \mathscr{L},i_\ast  \mathscr{L})\cong H^\ast(Z,\mathbb C).
\end{equation*}
In this case, Proposition \ref{intro:prop-CdF} allows us to translate semi-rigidity from $Z$ to $X$:

\begin{propositionx}\label{intro:prop-semirigid-lagrangian}
    Under the above assumptions, $\mathscr{L}$ is semi-rigid on $Z$ if and only if $i_\ast  \mathscr{L} $ is semi-rigid on $X$.
\end{propositionx}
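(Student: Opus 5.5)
By Theorem \ref{thm:intro:semirigid}, applied on both sides, the statement reduces to purely algebraic claims about the Yoneda pairings. On $Z$, $\mathscr L$ is semi-rigid if and only if $\ker(\Upsilon_{\mathscr L})$ contains no nonzero decomposable element. Here $\Upsilon_{\mathscr L}\colon \bwed^2 H^1(Z,\mathscr O_Z)\to H^2(Z,\mathscr O_Z)$, and by Proposition \ref{intro:prop-CdF} this is equivalent to: for linearly independent $\eta_1,\eta_2\in H^1(Z,\mathbb C)$ we have $\eta_1\cup\eta_2\neq 0$ in $H^2(Z,\mathbb C)$. On $X$, $i_\ast\mathscr L$ is semi-rigid if and only if no nonzero decomposable $e_1\wedge e_2$ lies in the kernel of the product $\bwed^2\Ext^1(i_\ast\mathscr L,i_\ast\mathscr L)\to\Ext^2(i_\ast\mathscr L,i_\ast\mathscr L)$. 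So it suffices to show that the Betti condition (ii) of Proposition \ref{intro:prop-CdF} for $Z$ is equivalent to the absence of decomposables in $\ker\Upsilon_{i_\ast\mathscr L}$.

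The key input is the multiplicative filtration $F^\ast$ with $\mathrm{gr}_F\Ext^\ast\cong H^\ast(Z,\mathbb C)$ as graded algebras. It comes from the degenerate local-to-global spectral sequence, using $\mathscr{E}xt^q(i_\ast\mathscr L,i_\ast\mathscr L)\cong \bwed^q N_Z\cong\Omega^q_Z$ via the symplectic form, so that $E_2^{p,q}=H^p(Z,\Omega^q_Z)$. In degree $1$ the filtration is a short exact sequence
\[
0\to H^1(Z,\mathscr O_Z)\to \Ext^1\to H^0(Z,\Omega^1_Z)\to 0,
\]
and after choosing a splitting we get a vector space isomorphism $\Ext^1\cong H^1(Z,\mathbb C)=H^{0,1}\oplus H^{1,0}$. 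In degree $2$, $\Ext^2$ has a three-step filtration with graded pieces $H^{2,0}$, $H^{1,1}$, $H^{0,2}$ in the Dolbeault indexing, and the product is compatible with these filtrations.

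Next I would identify the product on $\Ext^1$ with the cup product on $H^1(Z,\mathbb C)$, rather than only its associated graded. Formality of $\RHom^\ast(i_\ast\mathscr L,i_\ast\mathscr L)$ together with degeneration should allow a multiplicative splitting of the filtration in degrees $\le 2$. This is the analogue of how Hodge theory splits the filtration on $H^\ast(Z,\mathbb C)$. With such a splitting, $\Ext^{\le 2}\cong H^{\le 2}(Z,\mathbb C)$ as a truncated algebra. Then $\ker\Upsilon_{i_\ast\mathscr L}$ corresponds exactly to the kernel of $\bwed^2H^1(Z,\mathbb C)\to H^2(Z,\mathbb C)$, and decomposables correspond to decomposables. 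Combining this with Proposition \ref{intro:prop-CdF} finishes the proof.

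The main obstacle is that last splitting step. If an honest multiplicative splitting is not available, I would instead argue directly with the filtration, comparing leading terms. Take $e_1\wedge e_2\neq 0$ with $e_1e_2=0$ in $\Ext^2$. Change the basis of $\mathrm{span}(e_1,e_2)$ so that the symbols in $\mathrm{gr}_F$ are either independent, or such that $e_2\in F^1$ (the $H^{0,1}$ part) while $e_1$ has nonzero image in $H^{1,0}$; if both lie in $F^1$, work entirely inside $H^{0,1}$. In each case, check that the product of the symbols lands in the expected graded piece, so that vanishing of $e_1e_2$ forces vanishing of a cup product of two independent classes in $H^1(Z,\mathbb C)$. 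This uses the fact that the cup product of independent classes of different Hodge types lands in the direct sum decomposition, and that mixed terms cannot cancel. For the converse, lift a decomposable kernel element of $H^1(Z,\mathbb C)$. Here I would use Catanese's theorem behind Proposition \ref{intro:prop-CdF}: such an element comes from a pencil $Z\to C$, and this produces a pair of $H^{1,0}$ and $H^{0,1}$ pullbacks from $C$ whose lifts to $\Ext^1$ multiply to zero, because they factor through $\Ext^2$ of a sheaf pulled back from the curve.
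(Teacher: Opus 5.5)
The gap is in the direction ``$\mathscr L$ not semi-rigid $\Rightarrow$ $i_\ast\mathscr L$ not semi-rigid''. The other direction is fine. Your reduction through Theorem \ref{thm:intro:semirigid} and Proposition \ref{intro:prop-CdF} is the right frame. Your fallback leading-term argument is exactly the paper's (Lemma \ref{lem:Y-vs-gr(Y)}): adapt a basis of the $2$-plane $\langle e_1,e_2\rangle$ to its intersection with $F^1\Ext^1=H^1(Z,\mathscr O_Z)$, and read off the product in the graded piece $H^{2,0}$, $H^{1,1}$ or $H^{0,2}$. It turns a decomposable element of $\ker\Upsilon_{i_\ast\mathscr L}$ into two independent classes in $H^1(Z,\mathbb C)$ with zero cup product, and this proves ``$\mathscr L$ semi-rigid $\Rightarrow$ $i_\ast\mathscr L$ semi-rigid''. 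Drop your primary plan. Formality says the dg algebra is quasi-isomorphic to $\Ext^\ast$; it does not say $\Ext^\ast\cong\mathrm{gr}_F\Ext^\ast$ as algebras. Nothing in the hypotheses gives a multiplicative splitting, and the paper deliberately works only with $\mathrm{gr}_F$.

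You lift a pair made of an $H^{1,0}$ class and an $H^{0,1}$ class pulled back from the pencil $Z\to C$, and this step would fail:
\begin{itemize}
\item The $H^{1,0}$ part $H^0(Z,\Omega^1_Z)$ is only the quotient $\Ext^1/F^1\Ext^1$, so its lift $e_1$ is defined only modulo $H^1(Z,\mathscr O_Z)$.
\item You must pick the pair orthogonal on $C$, because $H^{1,0}(C)\otimes H^{0,1}(C)\to H^2(C,\mathbb C)$ is a perfect pairing. Even then, vanishing of the product in $H^{1,1}(Z)=\mathrm{gr}^1_F\Ext^2$ only puts $e_1e_2$ in $F^2\Ext^2=H^2(Z,\mathscr O_Z)$.
\item Nothing forces this residual term to vanish, or to be absorbed by changing the lift.
\item Your reason, ``they factor through $\Ext^2$ of a sheaf pulled back from the curve'', has no content here. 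The sheaf $i_\ast\mathscr L$ lives on $X$ and the pencil exists only on $Z$. The $H^{1,0}$ part of $\Ext^1$ records first-order deformations of $Z$ inside $X$, which do not come from $C$.
\end{itemize}

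The fix is to use two classes of type $(0,1)$. If $\mathscr L$ is not semi-rigid, Theorem \ref{thm:intro:semirigid} on $Z$ gives $0\neq a_1\wedge a_2\in\ker\Upsilon_{\mathscr L}\subseteq\bwed^2H^1(Z,\mathscr O_Z)$. The spaces $F^1\Ext^1=H^1(Z,\mathscr O_Z)$ and $F^2\Ext^2=H^2(Z,\mathscr O_Z)$ are the deepest filtration steps, so each equals its graded piece. By multiplicativity of the local-to-global spectral sequence, the Yoneda product $F^1\Ext^1\otimes F^1\Ext^1\to F^2\Ext^2$ is then literally $\Upsilon_{\mathscr L}$. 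Hence $a_1\wedge a_2$ is already a non-zero decomposable element of $\ker\Upsilon_{i_\ast\mathscr L}$. There is no lifting ambiguity, and Catanese's theorem is not needed in this direction. This is the canonical lift used in the paper.
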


Moduli spaces of sheaves on high-dimensional hyper-K\"ahler manifolds are notoriously hard to understand, especially for the deformations of $i_\ast \mathscr{L}$ whose support is a singular Lagrangian. 
However,  \cite{Bottini2024OG10Stable} provides one example where we can isolate a smooth symplectic connected component.
Let $X\coloneqq F(Y)$ be the Fano variety of lines on a smooth cubic fourfold $Y\subseteq \mathbb{P}^5$.
By \cite{BeauvilleDonagi1985} it is a hyper-K\"ahler fourfold of $\mathrm{K3}^{[2]}$-type. Let $Z\coloneqq F(Y_H)$ be the smooth Lagrangian surface parametrising lines contained in a smooth hyperplane section $Y_H\subseteq Y$. It comes with a Lagrangian embedding $i\colon Z\hookrightarrow X$.  Fix a polarisation $h\in \operatorname{Amp}(X)$ and set $\vv\coloneqq \vv(i_\ast  \mathscr{O}_Z)\in H^\ast(X, \mathbb{Q})$. In  \cite{Bottini2024OG10Stable}, it is shown that, if $Y$ is sufficiently general, the moduli space $M^s_\vv(X,h)$ has a connected component $M^\circ_\vv(X,h)$ through $[i_\ast  \mathscr{O}_Z]$ which is smooth and hyper-K\"ahler of $\mathrm{OG10}$-type.
It is isomorphic to the Laza--Saccà--Voisin compactification \cite{LazaSaccaVoisin2017OG10}  of the intermediate Jacobian fibration of $Y$. 

With such a smooth, stable component at hand, it is natural to look at multiples $M_{n\vv}(X,h)$ for $n\geq 2$, in the spirit of O'Grady's construction \cite{OGrady1999OG10, OGrady2000OG6}, and ask whether they give rise to new irreducible symplectic varieties via suitable (partial) resolutions. In the above setting, however, this strategy fails: using the results above, we show that the sheaves parametrised by $M^\circ_{\vv}(X,h)$ are semi-rigid.

\begin{theoremx}\label{intro:thm-Fano-surface}
    The component $M^\circ_{\vv}(X,h)$ parametrises semi-rigid stable sheaves. Moreover, for all $n\geq 2$ the direct-sum morphism $\Sym^n M^\circ_{\vv}(X,h)\to M_{n\vv}(X,h)$ is an isomorphism in a neighbourhood of the image of the diagonal.
\end{theoremx}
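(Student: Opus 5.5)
Here $X=F(Y)$ with $Y$ general, and $M^\circ_{\vv}(X,h)$ is the smooth hyper-K\"ahler component of \cite{Bottini2024OG10Stable} through $[i_\ast\mathscr O_Z]$. The plan has two parts. First prove semi-rigidity at every point of $M^\circ_{\vv}(X,h)$. Then combine it with the unibranch/isomorphism statements of Theorem \ref{thm:intro:split} and Proposition \ref{prop:ker-Upsilon-zero} to obtain the local isomorphism.

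\emph{Points with smooth Lagrangian support.} Take $\mathscr F=i_\ast\mathscr L$ with $Z$ smooth, so $Z=F(Y_H)$ for a smooth hyperplane section $Y_H$. I would verify the hypotheses of Proposition \ref{intro:prop-semirigid-lagrangian}, namely formality of $\RHom^\ast(i_\ast\mathscr L,i_\ast\mathscr L)$ and degeneration of the local-to-global spectral sequence $H^p(Z,\bwed^q N_{Z/X})\Rightarrow\Ext^{p+q}$, with $N_{Z/X}\cong\Omega_Z$. Both should follow from the fact that $i_\ast\mathscr L$ deforms in a smooth family with smooth symplectic base (\cite{Bottini2024OG10Stable}). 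For a Lagrangian whose normal bundle is $\Omega_Z$, this family yields the formality and degeneration results presumably established earlier in the paper.

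By Proposition \ref{intro:prop-semirigid-lagrangian} and Proposition \ref{intro:prop-CdF}, it then suffices to show that the Fano surface $Z$ admits no nonconstant map to a curve of genus $\ge2$. Equivalently, $\bwed^2H^1(Z,\mathbb C)\to H^2(Z,\mathbb C)$ has no decomposable vectors in its kernel. For the Fano surface of a smooth cubic threefold this is classical. The cup product $\bwed^2H^1(Z,\mathbb Z)\to H^2(Z,\mathbb Z)$ is injective (Clemens--Griffiths, Collino, Beauville): $H^1(Z)\cong H^3(Y_H)$ and $\bwed^2H^1\cong H^2$. Alternatively, the Albanese map embeds $Z$ in the intermediate Jacobian $J(Y_H)$, a principally polarised abelian fivefold, with image a class $\theta^3/3!$ that generates. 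Either route kills all decomposables, and even gives $\ker\Upsilon_{\mathscr F}=0$ at these points.

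\emph{Remaining points and the local isomorphism.} Semi-rigidity is a condition on the kernel of $\Upsilon$, which is a map of vector bundles along the smooth base $M^\circ_{\vv}$. Both $\Ext^1$ (of rank $10$) and $\Ext^2$ have locally constant dimension there: $\Ext^2$ is dual to $\Ext^2$ by Serre duality and is controlled by the smooth symplectic structure. So I would instead prove the stronger statement $\ker\Upsilon_{\mathscr F}=0$, via the dimension count $\dim\bwed^2\Ext^1=45=\operatorname{rk}\Upsilon$. Injectivity is an open condition, and it holds on the dense open set of smooth supports. The main obstacle is extending it to sheaves with singular support: injectivity is open, not closed. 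I would address this using the fact that $\Upsilon$ is compatible with the holomorphic symplectic form. The pairing $\bwed^2\Ext^1\to\Ext^2$ followed by a trace or Lefschetz-type map should coincide with the pullback of $\sigma\wedge\sigma$-type classes on the OG10 manifold $M^\circ_{\vv}$. Injectivity would then follow from Verbitsky's theorem: $\Sym^k H^2\hookrightarrow H^{2k}$, and more precisely $\bwed^2T\to\bwed^2T$ with $T\cong\Ext^1$ is given by $\sigma$, which is nondegenerate everywhere. Concretely, I would try to identify $\Upsilon$ pointwise with a map whose injectivity only uses that $\sigma$ is nondegenerate, which holds at every point.

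Granting $\ker\Upsilon=0$ everywhere, Proposition \ref{prop:ker-Upsilon-zero} gives that $M_{n\vv}(X,h)$ is unibranch at $[\mathscr F^{\oplus n}]$. Theorem \ref{thm:intro:split}(i) gives that the direct-sum map is a closed embedding onto a component near that point. Hence $\Sym^n M^\circ_{\vv}\to M_{n\vv}$ is an isomorphism near the diagonal. Finally, a general point of the image of the diagonal has mixed summands $\mathscr F_1^{\oplus a_1}\oplus\cdots$, and these are handled by the same local-model argument applied to each summand separately, using $\Ext^1(\mathscr F_i,\mathscr F_j)=0$ for $i\ne j$ near the diagonal.
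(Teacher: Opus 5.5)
Your argument at sheaves with smooth support matches the paper's: for $\mathscr F=i_\ast\mathscr L$ with $\mathscr L\in\Pic^0(Z)$, three inputs give $\ker(\Upsilon_{\mathscr F})=0$. They are degeneration of the local-to-global spectral sequence, Collino's isomorphism $\bwed^2H^1(Z,\mathbb C)\iso H^2(Z,\mathbb C)$, and Lemma \ref{lem:Y-vs-gr(Y)}(i). Two of your justifications there are off. Degeneration comes from the vanishing of the Atiyah class of $\mathscr L\in\Pic^0(Z)$ (Example \ref{ex:Atiyah-class-vanishing}), not from the existence of a smooth family. Formality does not follow from smoothness of $\mathrm{Def}_{\mathscr F}$; it is an input from the choice of $Y$, via the derived equivalence \eqref{eq:TwistedEquivalence} and \cite{MeazziniOnorati2025}.

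The genuine gap is at sheaves with singular support. You rightly note that injectivity of $\Upsilon$, like semi-rigidity, is an open condition, so density of the smooth-support locus proves nothing at the boundary. Yet both claims of the theorem concern every $[\mathscr F]\in M^\circ_{\vv}(X,h)$, since a neighbourhood of the diagonal contains $[\mathscr F^{\oplus n}]$ for singular-support $\mathscr F$.

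The mechanism you propose to fill this cannot work. The symplectic form on $\Ext^1(\mathscr F,\mathscr F)=T_{[\mathscr F]}M^\circ_{\vv}(X,h)$ is obtained from $\Upsilon_{\mathscr F}$ by composing with a single trace-type functional $\lambda\colon\Ext^2(\mathscr F,\mathscr F)\to\mathbb C$. Its nondegeneracy therefore sees only a one-dimensional quotient of $\Ext^2$, and is compatible with $\ker(\Upsilon_{\mathscr F})$ being large and full of decomposables. The paper's own Example \ref{ex:surfaces-case} refutes this inference: on symplectic surfaces with $\dim\Ext^1(\mathscr F,\mathscr F)\ge 4$, the Mukai form is nondegenerate, yet $\ker(\Upsilon_{\mathscr F})$ is a hyperplane in $\bwed^2\Ext^1$ containing decomposables. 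Verbitsky's theorem concerns the global cohomology $H^\ast(M^\circ_{\vv}(X,h),\mathbb C)$, not the pointwise Yoneda algebra $\Ext^\ast(\mathscr F,\mathscr F)$, and you give no identification between the two. Your claimed constancy of $\dim\Ext^2$ is also unsupported, since Serre duality only makes $\Ext^2$ self-dual; it would not help anyway, because injectivity is open.

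The paper does not try to extend from the dense locus. It imports pointwise injectivity of $\Upsilon_{\mathscr F}$ for every sheaf in the component from \cite[Thm.\@ 1.3]{Bottini2024OG10Stable}. It then applies Proposition \ref{prop:ker-Upsilon-zero}, which directly gives the isomorphism near each $[\mathscr F^{\oplus n}]$, so your detour through Theorem \ref{thm:intro:split}(i) and mixed summands is unnecessary. Without such a pointwise input, your argument proves the theorem only over the open locus of smooth supports.
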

 
This is slightly stronger than Theorem \ref{thm:intro:split} and follows from $\ker(\Upsilon_{i_\ast  \mathscr{O}_Z})=0$, see Proposition \ref{prop:ker-Upsilon-zero}. Consequently, this component is a \emph{primitive symplectic variety} in the sense of \cite{BakkerLehn2022GlobalModuliTheory}, and it admits no symplectic resolution.

\subsection*{Reducible symplectic moduli spaces} As an immediate corollary of Theorem \ref{intro:thm-Fano-surface}, we can show that moduli spaces $M_{\vv}(X,h)$ with fixed Mukai vector $\vv\in H^\ast(X, \mathbb{Q})$ on higher-dimensional hyper-K\"ahler manifolds may not be irreducible, in contrast with the surface case. 

More concretely, pick a smooth cubic fivefold $\widetilde{Y}\subseteq \mathbb{P}^6$ containing $Y$ as a hyperplane section. Let $Z'\coloneqq F_2(\widetilde{Y})$ be the variety of planes contained in $\widetilde{Y}$. By \cite{IlievManivel2008}, intersecting with $Y$ defines a Lagrangian immersion $j\colon Z'\to  X$ with
    \begin{equation*}
        \vv(j_\ast \mathscr{O}_{Z'})=63 \vv(i_\ast  \mathscr{O}_Z)\in H^\ast(X, \mathbb{Q})
    \end{equation*}
and $\dim \Ext^1(j_\ast \mathscr{O}_{Z'}, j_\ast \mathscr{O}_{Z'})=42$, see Section \ref{section:examples}. Thus $[j_\ast \mathscr{O}_{Z'}]\in M^s_{63\vv}(X,h)$ is a smooth point lying on a 42-dimensional component---birational to the irreducible symplectic variety constructed in \cite{liu2025irreducible}---distinct from the 630-dimensional split component given by Theorem \ref{intro:thm-Fano-surface}. In particular, $M_{63\vv}(X,h)$ is not irreducible.

\subsection*{Outline} 
In Section \ref{section:local-structure-pi} we review the local description of the germ of $M_{n\vv}(X,h)$ at $[\mathscr{F}^{\oplus n}]$. This yields an explicit algebraic model for $\mathrm{Def}_{\mathscr{F}^{\oplus n}}$, given by a quadratic cone $\mu^{-1}(0)\subseteq \Ext^1(\mathscr{F}^{\oplus n}, \mathscr{F}^{\oplus n})$ inside the tangent space of the moduli stack $\mathscr{M}_{n\vv}(X,h)$. 

In Section \ref{section:commuting-scheme} we introduce a second quadratic cone $\chi^{-1}(0)\subseteq \Ext^1(\mathscr{F}^{\oplus n}, \mathscr{F}^{\oplus n})$, which can be identified with a commuting scheme for tuples of matrices in $\mathfrak{gl}_n$. In Section \ref{section:local-structure-direct sum} we show there is a $\mathrm{GL}_n$-equivariant inclusion $\chi^{-1}(0)\subseteq \mu^{-1}(0)$. Passing to quotients by $\mathrm{GL}_n$, this inclusion provides a local description of the direct-sum morphism \eqref{eq:intro:direct-sum}. The argument relies crucially on a generalised version of the Chevalley restriction theorem for commuting schemes. 

In Section \ref{section:semirigid}, we introduce the notion of \emph{semi-rigid} stable sheaves, and use the above local description to prove Theorems \ref{thm:intro:semirigid} and \ref{thm:intro:split}. Finally, Section \ref{section:examples} contains examples and applications. We first treat elementary cases, including line bundles and the geometric criterion in Proposition \ref{intro:prop-CdF}, and then turn to the hyper-K\"ahler setting and the main example of Theorem \ref{intro:thm-Fano-surface}. 

\subsection*{Acknowledgments} We wish to thank Borislav Mladenov for helpful correspondence about the algebra structure on \eqref{eq:Ext*-Betticohomology}. This research was funded by the ERC Synergy Grant HyperK, Grant
agreement ID 854361.

\addtocontents{toc}{\protect\setcounter{tocdepth}{1}}
\tableofcontents


\section{Local structure of \texorpdfstring{$\mathscr{M}_\mathbf{v}(X,h) \to M_\mathbf{v}(X,h)$}{Mv(X,h) to Mv(X,h)}}\label{section:local-structure-pi}

Let  $\mathscr{F}$ be a polystable sheaf on a smooth polarised variety $(X,h)$. We view $\mathscr{F}$ as a closed point of the moduli stack $\mathscr{M}_\vv(X,h)$ of Gieseker semistable sheaves on $(X,h)$ with Mukai vector $\vv\coloneqq \vv(\mathscr{F})\in H^\ast(X, \mathbb{Q})$, and we write $G\coloneqq \operatorname{Aut}(\mathscr{F})$ for its stabiliser group. 

By standard deformation theory \cite{ArbarelloSaccaQuiver, KaledinLehnSorger2006Singular, Manetti2022LieMethods}, the marked deformation functor of $\mathscr{F}$ admits a pro-representable hull $\mathrm{Def}_\mathscr{F}\coloneqq \operatorname{Spf}(R)$. Since $G$ is reductive, the natural $G$-action on marked deformations can be lifted to $R$, so that at the level of good moduli spaces we get an isomorphism of completed local rings
    \begin{equation*}
        \hat{\mathscr{O}}_{M_{\vv}(X,h), [\mathscr{F}]}\cong R^G.
    \end{equation*}

Kuranishi theory provides an explicit presentation of $\mathrm{Def}_\mathscr{F}$ as the zero locus of a formal $G$-equivariant map
    \begin{equation}\label{eq:Kuranishi-map}
        \kappa_{\mathscr{F}}\colon \widehat{\Ext}{}^{1}(\mathscr{F},\mathscr{F})\to \Ext^2(\mathscr{F},\mathscr{F}),
    \end{equation}
where $\widehat{\Ext}{}^{1}(\mathscr{F},\mathscr{F})$ denotes the formal completion of $\Ext^1(\mathscr{F}, \mathscr{F})$ at the origin. One may arrange that $\kappa_{\mathscr{F}}$ has no constant or linear term, and the quadratic term is induced by the Yoneda square map
    \begin{equation}\label{eq:Yoneda-square}
        \mu_{\mathscr{F}}\colon \Ext^1(\mathscr{F}, \mathscr{F})\to \Ext^2(\mathscr{F}, \mathscr{F}), \quad e\mapsto e\cup e.
    \end{equation}
In other words, one has 
    \begin{equation*}
        \kappa_{\mathscr{F}}(e)=\mu_{\mathscr{F}}(e)+ \textnormal{higher order terms in }e.
    \end{equation*}

Assume now that the differential graded Lie algebra $\RHom^\ast(\mathscr{F}, \mathscr{F})$ is formal. Then one may choose $\kappa_{\mathscr{F}}$ to equal its quadratic term, i.e.\@ $\kappa_{\mathscr{F}}=\mu_{\mathscr{F}}$. This provides an explicit geometric algebraisation for  $\mathrm{Def}_\mathscr{F}$: it can be identified with the formal completion at $0\in \Ext^1(\mathscr{F}, \mathscr{F})$ of the quadratic cone $\mu_{\mathscr{F}}^{-1}(0)\subseteq \Ext^1(\mathscr{F}, \mathscr{F})$. Accordingly, the quotient stack $[\mu_{\mathscr{F}}^{-1}(0)/G]$ provides an \'etale-local model for the moduli stack $\mathscr{M}_\mathbf{v}(X,h)$, compatible with the good moduli space morphism $\pi\colon \mathscr{M}_\vv(X,h)\to M_{\vv}(X,h)$:

\begin{lemma}\label{lem:local-structure-moduli} 
    There exists an affine $G$-scheme $\Spec A$, containing a $G$-fixed closed point $x\in \Spec A$, and a commutative diagram 
        \begin{equation*}
            \begin{tikzcd}
                ({[}\mu_{\mathscr{F}}^{-1}(0)/G{]},0) \ar[d] & ({[}\Spec A/G{]}, x) \ar[r] \ar[l] \ar[d]   & (\mathscr{M}_{\mathbf{v}}(X,h), \mathscr{F}) \ar[d, "\pi"] \\
                (\mu_\mathscr{F}^{-1}(0) /\!/ G, 0 )& (\Spec A^G, x) \ar[r] \ar[l] & (M_{\mathbf{v}}(X,h), [\mathscr{F}])
            \end{tikzcd}
        \end{equation*}
    in which the horizontal arrows are \'etale and both squares are Cartesian.
\end{lemma}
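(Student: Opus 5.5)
The plan is to combine the Alper--Hall--Rydh étale slice theorem for algebraic stacks with Artin approximation, using formality to identify the formal germ with the quadratic cone.

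\emph{Step 1: the slice.} The stack $\mathscr{M}_\vv(X,h)$ is of finite type with affine diagonal. It admits the good moduli space $\pi$, and the closed point $\mathscr{F}$ has linearly reductive stabiliser $G=\operatorname{Aut}(\mathscr{F})$. Here $G$ is a product of general linear groups, since $\mathscr{F}$ is polystable and we work in characteristic zero. By the local structure theorem of Alper--Hall--Rydh, there is an affine $G$-scheme $\Spec A$ with a $G$-fixed point $x$ and an étale morphism $[\Spec A/G]\to\mathscr{M}_\vv(X,h)$ sending $x\mapsto\mathscr{F}$. This morphism induces an isomorphism of stabilisers at $x$. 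After shrinking $\Spec A$ by a $G$-saturated affine neighbourhood, the morphism becomes strongly étale: it is stabiliser preserving and sends closed points to closed points. Alper's Luna-type fundamental lemma then shows that the induced map $\Spec A^G\to M_\vv(X,h)$ is étale and that the right-hand square is Cartesian.

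\emph{Step 2: the formal germ at $x$.} By construction of the slice, the tangent space at $x$ is $G$-equivariantly identified with $\Ext^1(\mathscr{F},\mathscr{F})$. The completion $\hat A_x$ is a $G$-equivariant miniversal deformation ring, so $\hat A_x\cong R$ $G$-equivariantly. Formality of $\RHom^\ast(\mathscr{F},\mathscr{F})$ lets us take $\kappa_{\mathscr{F}}=\mu_{\mathscr{F}}$. This gives a $G$-equivariant isomorphism $\hat A_x\cong\hat{\mathscr{O}}_{\mu_{\mathscr{F}}^{-1}(0),0}$.

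\emph{Step 3: passing from formal to étale.} The point $0$ is $G$-fixed in the affine $G$-scheme $\mu_{\mathscr{F}}^{-1}(0)$. We apply the equivariant Artin algebraisation/approximation criterion of Alper--Hall--Rydh: two quotient stacks $[\Spec A/G]$ and $[\mu^{-1}_{\mathscr{F}}(0)/G]$ at fixed points whose completions are $G$-equivariantly isomorphic admit a common étale neighbourhood. That is, there is a $G$-affine $\Spec A'$ with $G$-equivariant maps to both, each étale at the fixed point $x'$. We shrink again to make these maps strongly étale, replace $\Spec A$ by $\Spec A'$, and compose with Step 1. The same fundamental lemma then gives the Cartesian left-hand square and étaleness of $\Spec A'^G\to\mu_{\mathscr{F}}^{-1}(0)/\!/G$.

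The main obstacle is Step 3. The isomorphism we obtain is only one of complete local rings, whereas the conclusion requires étale maps. The two worries are that we need the equivariant version of the approximation, and that we must control stabilisers so that both squares come out Cartesian.

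For the approximation itself, the first attempt is to cite the coherent-completeness/Artin approximation result of Alper--Hall--Rydh directly. This is the statement that an isomorphism of coherently complete stacks $[\operatorname{Spf}/G]$ is induced by an étale roof.

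For the Cartesian squares, the check is that the stabilisers at $x'$ all equal $G$. Strong étaleness then follows on a saturated open neighbourhood by Luna's fundamental lemma.
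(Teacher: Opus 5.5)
Your proposal is correct and follows essentially the same route as the paper. The paper likewise builds an \'etale slice at $\mathscr{F}$, identifies its germ $G$-equivariantly with the quadratic cone $\mu_{\mathscr{F}}^{-1}(0)$ using formality, and then applies the equivariant Artin approximation of Alper--Hall--Rydh (\S 4.6) to obtain a common \'etale neighbourhood. You add Luna's fundamental lemma to make the Cartesian squares explicit, which the paper leaves implicit. The remaining differences are minor. You take the slice directly from the Alper--Hall--Rydh local structure theorem rather than from a Quot-scheme presentation plus the classical Luna slice theorem. You compare formal completions, which is exactly the input the approximation criterion needs, instead of the analytic germs supplied by Arbarello--Sacc\`a.
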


In other words, the morphism $\pi\colon \mathscr{M}_{\mathbf{v}}(X,h)\to M_{\mathbf{v}}(X,h)$ is, \'etale-locally near $\mathscr{F}$, modelled on the natural map from the quotient stack $[\mu_\mathscr{F}^{-1}(0)/G]$ to the affine GIT quotient $\mu_\mathscr{F}^{-1}(0)/\!/ G$. 

\begin{proof} 
    Using a standard global quotient presentation of the moduli stack $\mathscr{M}_\vv(X,h)$ (via a suitable open subscheme of a Quot scheme), one applies the Luna \'etale slice theorem at the point corresponding to $\mathscr{F}$. This yields a $G$-invariant slice $(S,s)$ and an \'etale morphism $([S/G], s)\to (\mathscr{M}_\mathbf{v}(X,h), \mathscr{F})$. By  \cite[\textsection 4]{ArbarelloSaccaQuiver}, there exists a $G$-equivariant isomorphism of analytic germs between $(S,s)$ and $(\mu_\mathscr{F}^{-1}(0), 0)$. A suitable $G$-equivariant version of Artin approximation then upgrades this analytic identification to an \'etale neighbourhood \cite[\textsection 4.6]{AlperHallRydhLunaEtale}. See also \cite[\textsection 4]{ArbarelloSaccaBridgeland} and \cite[Thm.\@ 5.11]{DavisonPurityCY2} for comparable formulations in the CY2 setting, and \cite[Thm.\@ 1.1]{TodaModuliStacksSemistableSheaves2018} for a more general version without the formality assumption. 
\end{proof}

\begin{remark}[Stable locus]\label{rmk:local-structure-stable-locus} 
    The stable locus $\mathscr{M}^s_\vv(X,h)\subseteq \mathscr{M}_{\vv}(X,h)$ can be characterised as the locus of polystable sheaves with $\mathbb{G}_m$-stabiliser. It follows from the lemma that stable sheaves correspond, in the local model $\mu_{\mathscr{F}}^{-1}(0)$, to points with closed $G$-orbit and stabiliser equal to the diagonal scalar $\mathbb{G}_m$. Equivalently, after rigidifying by $\mathbb{G}_m$, this is the stable locus of $\mu^{-1}(0)$ for the induced action.  
\end{remark}

\begin{remark}[Compatibility with direct sums]\label{rmk:compatibility-direct sum} 
    The above algebraisation of the Kuranishi model is functorial with respect to direct sums. In particular, the natural morphism of deformation functors
        \begin{equation*}
            \mathrm{Def}_{\mathscr{F}}\times \mathrm{Def}_\mathscr{F}\to \mathrm{Def}_{\mathscr{F}\oplus \mathscr{F}}
        \end{equation*}
    induced by taking direct sums, is algebraised, under the formality assumption, by the morphism of quadratic cones
        \begin{equation*}
            \mu_{\mathscr{F}}^{-1}(0)\times \mu_{\mathscr{F}}^{-1}(0)\to \mu_{\mathscr{F}\oplus \mathscr{F}}^{-1}(0)
        \end{equation*}
    induced by the block-diagonal inclusion 
        \begin{equation}\label{block-diagonal-inclusion}
            \Ext^1(\mathscr{F}, \mathscr{F})\oplus \Ext^1(\mathscr{F}, \mathscr{F})\to \Ext^1(\mathscr{F}\oplus \mathscr{F}, \mathscr{F}\oplus \mathscr{F}). 
        \end{equation}
    Passing to good moduli spaces, the induced morphism on completed local rings agrees with that coming from the corresponding map of GIT quotients. By \cite[\href{https://stacks.math.columbia.edu/tag/0GDX}{Tag 0GDX}]{StacksProject}, this formal compatibility can be realised after passing to suitable \'etale neighbourhoods. In particular, \'etale-locally near the points $([\mathscr{F}],[\mathscr{F}])\in M_{\vv}(X, h)\times M_{\vv}(X, h)$ and $[\mathscr{F}\oplus \mathscr{F}]\in M_{2\vv}(X,h)$, the direct-sum morphism
        \begin{equation}
            M_{\vv}(X, h)\times M_{\vv}(X, h)\to M_{2\vv}(X,h), \quad ([\mathscr{F}_1], [\mathscr{F}_2]) \mapsto [\mathscr{F}_1\oplus \mathscr{F}_2]
        \end{equation}
    is modelled on the induced morphism between the quotients 
        \begin{equation*}
            \mu_{\mathscr{F}}^{-1}(0)/\!/\mathrm{Aut}(\mathscr{F})\times \mu_{\mathscr{F}}^{-1}(0)/\!/\mathrm{Aut}(\mathscr{F})\to \mu_{\mathscr{F}\oplus \mathscr{F}}^{-1}(0)/\!/ \mathrm{Aut}(\mathscr{F}\oplus \mathscr{F}). 
        \end{equation*}
\end{remark}

\begin{remark}[Moduli of representations and quiver varieties]\label{rmk:quiver-varieties} 
After choosing a polystable decomposition of $\mathscr{F}$, one can canonically encode the $G$-representation $\Ext^{1}(\mathscr{F},\mathscr{F})$ by an \emph{Ext-quiver} $Q$ with dimension vector $\mathbf{d}$, yielding a natural $G$-equivariant identification
\begin{equation*}
    \Ext^{1}(\mathscr{F},\mathscr{F})\cong \mathrm{Rep}_{Q}(\mathbf{d})
\end{equation*}
with the space of representations of $Q$, or equivalently of the path algebra $\mathbb{C}Q$ (see, for instance, \cite{ArbarelloSaccaQuiver, TodaModuliStacksSemistableSheaves2018}). Under the formality assumption of Lemma \ref{lem:local-structure-moduli}, a representative for the deformation space $\mathrm{Def}_{\mathscr{F}}$ is cut out by the Yoneda square map
\begin{equation*}
    \mu_{\mathscr{F}}\colon \Ext^{1}(\mathscr{F},\mathscr{F})\to \Ext^{2}(\mathscr{F},\mathscr{F}), \quad e\mapsto e\cup e.
\end{equation*}
Equivalently, the equations $\mu_{\mathscr{F}}=0$ may be viewed as a finite collection of quadratic relations among the arrows of $Q$, namely as a two-sided quadratic ideal $I\subset \mathbb{C}Q$, and one may identify
\begin{equation*}
    \mu_{\mathscr{F}}^{-1}(0)\cong \mathrm{Rep}_{(Q,I)}(\mathbf{d}),
\end{equation*}
where the latter parametrises representations of the \emph{bound quiver} $(Q,I)$, or equivalently of the associative quadratic algebra $\mathbb{C}Q/I$.

The main advantage of this explicit algebraisation of $\mathrm{Def}_{\mathscr{F}}$ is that it allows one to exploit the toolkit available for moduli of representations of associative algebras and quiver varieties \cite{ArtinAzumaya1969, King1994ModuliRep}. For instance, when $(X,h)$ is a polarised symplectic surface, Serre duality identifies $Q$ with a doubled quiver and $\mu_{\mathscr{F}}$ with the moment map for the natural symplectic structure on $\Ext^{1}(\mathscr{F},\mathscr{F})$, so that the affine quotient $\mu_{\mathscr{F}}^{-1}(0)\!/\!/G$ is a \emph{Nakajima quiver variety} \cite{ArbarelloSaccaQuiver}. This perspective underlies the local analysis of singularities of moduli spaces of sheaves on symplectic surfaces carried out in \cite{KaledinLehnSorger2006Singular}.
\end{remark}


\section{The commuting scheme}\label{section:commuting-scheme}

Let $n\geq 2$ and let $V$ be a complex vector space of dimension $d\geq 1$. Set $G\coloneqq\operatorname{GL}_n$ with Lie algebra $\mathfrak{g}\coloneqq \operatorname{Lie}(G)$.  The \textit{commuting scheme} is the closed subscheme 
    \begin{equation*}
        \mathfrak{C}(\mathfrak{g}, V) \subseteq V\otimes \mathfrak{g}
    \end{equation*}
defined as the zero fibre of the commutator morphism
    \begin{equation}\label{eq:commutator-morphism}
        \chi \colon V\otimes \mathfrak{g} \to \bwed^2 V \otimes \mathfrak{g}, \quad \sum_{i}e_i\otimes A_i\mapsto \frac{1}{2}\sum_{i,j}(e_i\wedge e_j)\otimes [A_i, A_j].
    \end{equation}
After choosing a basis of $V$ we may identify $V\otimes \mathfrak{g}\cong \mathfrak{g}^d$; under this identification, the $\mathbb{C}$-points of $\mathfrak{C}(\mathfrak{g}, V)$ are given exactly by $d$-tuples $(A_1, \dots, A_d)$ of pairwise commuting matrices. 

When $d=1$ the commuting condition is vacuous and $\mathfrak{C}(\mathfrak{g}, V)=\mathfrak{g}$. For $d\geq 2$, however, the geometry of $\mathfrak{C}(\mathfrak{g}, V)$ is notoriously subtle; even for pairs, reducedness remains open, while for larger $d$ reducibility and non-reduced components do occur \cite{JelisiejewSivic2022ComponentsSingularitiesQuotCommutingMatrices}. 

The adjoint action of $G$ on $\mathfrak{g}$ induces a natural action on $V\otimes \mathfrak{g}$ by simultaneous conjugation, and this preserves $\mathfrak{C}(\mathfrak{g},V)$. While the scheme $\mathfrak{C}(\mathfrak{g}, V)$ itself can be complicated, its affine GIT quotient $\mathfrak{C}(\mathfrak{g}, V)/\!/G$ is considerably better behaved: in this setting one has a multivariate analogue of the Chevalley restriction theorem.

Let $T\subseteq G$ be the standard maximal torus of diagonal matrices, with Lie algebra $\mathfrak{t}\subseteq \mathfrak{g}$. Write $N\coloneqq N_{G}(T)$ for the normaliser and $W\coloneqq N/T$ for the Weyl group, which we identify with $\mathfrak{S}_n$ via permutation matrices.  The linear subspace $V\otimes \mathfrak{t}\subseteq V\otimes \mathfrak{g}$ parametrises $d$-tuples of commuting diagonal matrices, hence it lies in $\mathfrak{C}(\mathfrak{g}, V)$. The resulting $N$-equivariant closed embedding
    \begin{equation}\label{eq:block-diagonal-immersion}
        V\otimes \mathfrak{t}\to \mathfrak{C}(\mathfrak{g}, V)
    \end{equation}
induces a morphism between the affine GIT quotients $V\otimes \mathfrak{t}/\!/ N \to \mathfrak{C}(\mathfrak{g}, V) /\!/ G$. Since $T$ acts trivially on $V\otimes \mathfrak{t}$, the $N$-action factors through $W$, and we may equally view this as a morphism
    \begin{equation}\label{eq:Chevalley-morphism}
        V\otimes \mathfrak{t}/\!/ W \to \mathfrak{C}(\mathfrak{g}, V) /\!/ G.
    \end{equation}
For $d=1$, this is the classical Chevalley isomorphism for $\mathrm{GL}_n$, reflecting the fact that the ring of invariants $\mathbb{C}[\mathfrak{g}]^{G}$ is generated by the elementary symmetric polynomials in the eigenvalues. 

\begin{theorem}[{\cite[Thm.\@ 3]{Vaccarino2007Commuting}}]\label{thm:Vaccarino-Chevalley}
    The natural morphism 
        \begin{equation}\label{eq:Chevalley-isomorphism}
            V\otimes \mathfrak{t} /\!/ W \iso \mathfrak{C}(\mathfrak{g}, V) /\!/ G
        \end{equation}
    is an isomorphism. In particular, the quotient $\mathfrak{C}(\mathfrak{g}, V) /\!/ G$ is reduced and normal.  
\end{theorem}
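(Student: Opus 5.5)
The strategy is to prove that the pullback on invariant rings
\[
\mathbb{C}[\mathfrak{C}(\mathfrak{g},V)]^G\to \mathbb{C}[V\otimes\mathfrak{t}]^W
\]
is both surjective and injective. Reducedness and normality then come for free, because $V\otimes\mathfrak{t}/\!/W$ is the quotient of a smooth affine space by a finite group.

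\emph{Injectivity.} Let $f$ be a $G$-invariant function on $\mathfrak{C}(\mathfrak{g},V)$ that vanishes on $V\otimes\mathfrak{t}$.
\begin{itemize}
\item On $\mathbb{C}$-points, every commuting tuple $(A_1,\dots,A_d)$ can be simultaneously triangularised. Its orbit closure therefore contains the tuple of diagonal parts, obtained as the limit under a one-parameter subgroup of $T$. Hence $f$ vanishes set-theoretically, and $f$ is nilpotent.
\item This reduces injectivity to showing that $\mathbb{C}[\mathfrak{C}(\mathfrak{g},V)]^G$ is reduced. This is the hard point, since $\mathfrak{C}(\mathfrak{g},V)$ itself may be non-reduced.
\item To handle it, I would use the presentation of invariants in characteristic zero. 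By the first fundamental theorem for $\mathrm{GL}_n$ acting on tuples of matrices (Procesi), $\mathbb{C}[\mathfrak{g}^d]^G$ is generated by the trace functions $\operatorname{tr}(A_{i_1}\cdots A_{i_k})$.
\item By reductivity, $\mathbb{C}[\mathfrak{C}]^G$ is the quotient of $\mathbb{C}[\mathfrak{g}^d]^G$ by $I^G$, where $I$ is the commutator ideal. Modulo $I^G$, the trace of a monomial depends only on its multidegree.
\item Following Vaccarino, I would identify the ring generated by such multidegree traces, subject to the Cayley--Hamilton/Procesi trace identities, with the ring of multisymmetric functions $\mathbb{C}[V\otimes\mathfrak{t}]^{\mathfrak{S}_n}$. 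Concretely, $\operatorname{tr}$ of a monomial of multidegree $\alpha$ maps to the power sum $\sum_{k=1}^n x_{1k}^{\alpha_1}\cdots x_{dk}^{\alpha_d}$.
\item The key check is that every relation among these power sums in $\mathbb{C}[V\otimes\mathfrak t]^{\mathfrak{S}_n}$ already holds in $\mathbb{C}[\mathfrak{C}]^G$. Equivalently, the second fundamental theorem relations (derived from the $n$-th Cayley--Hamilton polynomial) specialise on commuting tuples to exactly the relations among multisymmetric power sums. Vaccarino does this by polarising the Cayley--Hamilton identity and comparing with the generators of the relation ideal for multisymmetric functions.
\end{itemize}

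\emph{Surjectivity.} It is a classical fact in characteristic zero that the multisymmetric polynomials $\mathbb{C}[V\otimes\mathfrak{t}]^{\mathfrak{S}_n}$ are generated by the polarised power sums $p_\alpha$ above. Each $p_\alpha$ is the restriction of $\operatorname{tr}(A_1^{\alpha_1}\cdots A_d^{\alpha_d})$, which is $G$-invariant on $\mathfrak{C}(\mathfrak{g},V)$. Hence the restriction map is onto.

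The step I expect to be the main obstacle is injectivity at the scheme-theoretic level, namely controlling nilpotents in the invariant ring. My first attempt would be the trace-identity comparison described above, which is precisely the content of the cited Theorem of Vaccarino.
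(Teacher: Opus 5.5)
Your supporting reductions are correct: surjectivity from the polarised power sums, set-theoretic injectivity via simultaneous triangularisation and a one-parameter subgroup of $T$, and $\mathbb{C}[\mathfrak{C}(\mathfrak{g},V)]^G=\mathbb{C}[\mathfrak{g}^d]^G/I^G$ by linear reductivity; your ``key check'' is exactly the remaining scheme-theoretic content, namely that the invariant ring of the possibly non-reduced commuting scheme has no nilpotents. The paper gives no argument beyond citing Vaccarino's Theorem~3, and you take that same step from the same source, so your proposal is essentially the paper's approach with correct scaffolding around the citation.
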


\begin{remark} 
    One may replace $\operatorname{GL}_n$ by an arbitrary connected complex reductive group $G$. In that generality, the morphism \eqref{eq:Chevalley-morphism} is known to be the normalisation of $(\mathfrak{C}(\mathfrak{g}, V) /\!/ G)_{\text{red}}$ \cite[\textsection 6]{Hunziker1997ClassicalInvariantTheory}, and it is conjectured to be an isomorphism \cite[Conj. 3.1]{ChenNgo2020Hitchin}. For $d=1$, this recovers the classical Chevalley restriction theorem for $G$. 
\end{remark}

Upon fixing a basis of $\mathfrak{t}$, one gets an isomorphism 
    \begin{equation*}
        V\otimes \mathfrak{t}/\!/ W \iso \Sym^n V
    \end{equation*}
with the $n$-fold symmetric product of $V$.  Under this identification, the quotient morphism $\mathfrak{C}(\mathfrak{g}, V)\to \mathfrak{C}(\mathfrak{g}, V)/\!/ G$ sends a commuting $d$-tuple to the unordered set of its joint eigenvalues.
Since a $d$-tuple of commuting matrices can be simultaneously triangularised, we can also think of an element in $\mathfrak{C}(\mathfrak{g}, V)$ as an upper triangular matrix with entries in $V$.
The joint eigenvalues then are simply the diagonal entries, which are well defined up to reordering.

Let $(V\otimes \mathfrak{t})_{\text{reg}}\subseteq V\otimes \mathfrak{t}$ be the open subset consisting of diagonal matrices with distinct joint eigenvalues. Its image $U\coloneqq (V\otimes \mathfrak{t})_{\text{reg}} /\!/ W$ in the quotient gets identified with the complement of the big diagonal $\Delta\subseteq \Sym^n V$. Note that, for $d \geq 2$, the open $U$ is precisely the smooth locus of $V\otimes \mathfrak{t}/\!/ W\cong \mathfrak{C}(\mathfrak{g}, V) /\!/G$.

\begin{proposition}\label{prop:geometric-quotient-away-diagonal} 
    The base-change $[\mathfrak{C}(\mathfrak{g}, V)/G]_U\to U\subseteq\mathfrak{C}(\mathfrak{g}, V) /\!/ G$ is a coarse moduli space. 
\end{proposition}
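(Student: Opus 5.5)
We will show that over $U$ the quotient stack $[\mathfrak{C}(\mathfrak{g},V)/G]_U$ is a gerbe-like object whose geometric points all have closed orbits, with the quotient map bijective on geometric points. Combined with the fact that a good moduli space is universal for maps to algebraic spaces, this makes it a coarse moduli space. Recall that a good moduli space $\pi\colon\mathscr{X}\to Y$ is a coarse moduli space exactly when it induces a bijection on geometric points, i.e.\ when every point of $\mathscr{X}$ is closed in its fibre. Equivalently, all $G$-orbits in $\mathfrak{C}(\mathfrak{g},V)_U$ are closed. The plan is therefore to show that every orbit lying over $U$ is closed. Since base change along the open immersion $U\subseteq \mathfrak{C}(\mathfrak{g},V)/\!/G$ preserves good moduli spaces, it suffices to check this on geometric points.

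Let $(A_1,\dots,A_d)\in\mathfrak{C}(\mathfrak{g},V)$ map to a point of $U$. By Theorem \ref{thm:Vaccarino-Chevalley} and the discussion after it, its image is the unordered collection of joint eigenvalues $\lambda_1,\dots,\lambda_n\in V^\vee$. Here we regard the tuple as an element of $V\otimes\mathfrak{g}$, and $\lambda_k$ is the tuple of $k$-th diagonal entries after simultaneous triangularisation. Lying over $U$ means that the $\lambda_k$ are pairwise distinct.

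The key linear-algebra step is the following. Pick a generic linear combination $A=\sum_i c_iA_i$. Since the $\lambda_k$ are distinct vectors, a general choice of $(c_i)$ makes the eigenvalues $\langle c,\lambda_k\rangle$ of $A$ pairwise distinct, so $A$ is regular semisimple. Each $A_j$ commutes with $A$, hence preserves its eigenlines, hence is diagonal in the eigenbasis of $A$. Thus the tuple is $G$-conjugate to an element of $(V\otimes\mathfrak{t})_{\mathrm{reg}}$.

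For such a point the orbit is closed. Indeed, the generic combination $A$ has closed orbit, being semisimple, and more intrinsically, any point in the closure of the orbit lies in the same fibre of the quotient. By the same argument, that point is also simultaneously diagonalisable with the same joint eigenvalues, hence lies in the same orbit. Consequently each fibre of $\mathfrak{C}(\mathfrak{g},V)_U\to U$ consists of a single orbit, which is closed, and its stabiliser is the torus $T$ up to conjugacy.

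It follows that $\pi$ is bijective on geometric points over $U$. Being a good moduli space, it is universal for maps to algebraic spaces. Hence it is a coarse moduli space.

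The main subtle point is the choice of generic combination: one must make sure that distinctness of the joint eigenvalues in $V^\vee$ is exactly what the quotient map detects, scheme-theoretically up to reduction. Reducedness is not an issue, since the coarse-space property only concerns geometric points together with the universal property already supplied by the good moduli space.
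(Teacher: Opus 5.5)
Your argument is correct and rests on the same key step as the paper: a generic linear combination $\sum_i c_iA_i$ is regular semisimple, so every commuting tuple over $U$ is simultaneously diagonalisable with distinct joint eigenvalues, and hence each fibre over $U$ is a single closed $G$-orbit. The only difference is the final step. The paper identifies $[\mathfrak{C}(\mathfrak{g},V)/G]_U$ explicitly with $[(V\otimes\mathfrak{t})_{\mathrm{reg}}/N]$, whereas you use the general fact that a good moduli space which is bijective on geometric points is coarse; this has the small advantage of not needing the scheme-theoretic isomorphism $G\times^N(V\otimes\mathfrak{t})_{\mathrm{reg}}\cong\mathfrak{C}(\mathfrak{g},V)_{\mathrm{reg}}$ (also, the joint eigenvalues live in $V$, not $V^\vee$, though this slip is purely notational).
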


Recall this means the morphism $[\mathfrak{C}(\mathfrak{g}, V)/G]_U\to U$ induces a bijection on isomorphism classes of $\mathbb{C}$-points. Equivalently, the morphism of schemes $\mathfrak{C}(\mathfrak{g}, V)\to \mathfrak{C}(\mathfrak{g}, V) /\!/ G$ is a geometric quotient over the regular locus $U$ \cite[\textsection 4.2.1]{Alper2025StacksModuli}. Note that the GIT-theoretic $G$-stable locus of $\mathfrak{C}(\mathfrak{g}, V)$ is empty, as the generic stabiliser has positive dimension, even after rigidifying by $\mathbb{G}_m$.  

\begin{proof} 
    A commuting $d$-tuple with distinct joint eigenvalues can be simultaneously diagonalised. The joint eigenvectors are those of a generic linear combination (which has distinct eigenvalues). It follows that the $G$-equivariant open subset $\mathfrak{C}(\mathfrak{g}, V)_{\text{reg}}\subseteq \mathfrak{C}(\mathfrak{g}, V)$ corresponding to $[\mathfrak{C}(\mathfrak{g}, V)/G]_U$ consists exactly of those commuting tuples that are simultaneously diagonalisable with distinct joint eigenvalues. On this open subset, every $G$-orbit is closed and contains a unique $N$-orbit in $(V\otimes \mathfrak{t})_{\text{reg}}$, thus giving a $G$-equivariant isomorphism
        \begin{equation*} 
            G\times^N (V\otimes \mathfrak{t})_{\text{reg}}\iso \mathfrak{C}(\mathfrak{g}, V)_{\text{reg}}.
        \end{equation*}
    Hence we get an isomorphism of quotient stacks over $U$
        \begin{equation*}
            [(V\otimes \mathfrak{t})_{\text{reg}}/N]\iso [\mathfrak{C}(\mathfrak{g}, V)/G]_U.
        \end{equation*}
    Finally $[(V\otimes \mathfrak{t})_{\text{reg}}/N]\to (V\otimes \mathfrak{t})_{\text{reg}}/\!/W\cong U $ is a coarse moduli space since the $T$-action on $(V\otimes \mathfrak{t})_{\text{reg}}$ is trivial. 
\end{proof}


\section{Local structure of \texorpdfstring{$\Sym^n M_\vv(X,h)\to M_{n\vv}(X,h)$}{Symn Mv(X,h) to Mnv}}\label{section:local-structure-direct sum}

For the remainder of the paper we impose the following standing hypothesis.

\begin{assumption}\label{ass:main-assumption} 
    Let $\mathscr{F}$ be a stable sheaf on a smooth polarised variety $(X,h)$ satisfying the following properties:
        \begin{enumerate}[label={\upshape(\roman*)}]
            \item The differential graded algebra $\RHom^\ast(\mathscr{F}, \mathscr{F})$ is associatively formal.
            \item The deformation space $\mathrm{Def}_\mathscr{F}$ is smooth. 
        \end{enumerate}
\end{assumption}

\begin{remark}\label{rmk:on-main-ass}
    In the hyper-K\"ahler setting that motivates much of this paper, condition (i) is known for polystable sheaves on symplectic surfaces and for hyperholomorphic vector bundles on hyper-K\"ahler manifolds \cite{BandieraManettiMeazzini2021Formality, MeazziniOnorati2023Hyperholomorphic}. As explained in Section \ref{section:local-structure-pi}, in more geometric terms, it implies that the deformation space $\operatorname{Def}_\mathscr{F}$ is cut out by quadratic equations. This is not strictly necessary for all our results, but it allows us to work with \'etale neighbourhoods instead of tangent cones. One could also work with the weaker assumption that $\RHom^\ast(\mathscr{F}, \mathscr{F})$ is formal as a differential graded Lie algebra, which still yields quadraticity; however, in that case one would need to assume the relevant formality for $\mathscr{F}^{\oplus n}$ uniformly in $n$ (see the proof of Lemma \ref{lem:local-structure-direct-sum}). Condition (ii) holds for stable sheaves on symplectic surfaces by Mukai \cite{Mukai1984SymplecticStructure}, but is not currently known in general in higher dimension. See Section \ref{section:examples} for some examples. 
\end{remark}

From the discussion in Section \ref{section:local-structure-pi}, under our formality assumption on $\RHom^\ast (\mathscr{F},\mathscr{F})$, the local deformation space $\operatorname{Def}_{\mathscr{F}}$ of a \emph{stable} sheaf $\mathscr{F}$ on $(X,h)$ is smooth if and only if the Yoneda pairing 
    \begin{equation*}
        \Ext^1(\mathscr{F}, \mathscr{F})\otimes \Ext^1(\mathscr{F}, \mathscr{F})\to \Ext^2(\mathscr{F}, \mathscr{F}), \quad e_1\otimes e_2\mapsto e_1 \cup e_2
    \end{equation*}
is alternating, i.e.\@ $\mu_{\mathscr{F}}\equiv 0$. Since $\mathscr{F}$ is stable, $\mathrm{Aut}(\mathscr{F})=\mathbb{G}_m$ acts trivially on $\mu_{\mathscr{F}}^{-1}(0)$, so that $\mu_{\mathscr{F}}^{-1}(0)$ is also a local model for $M_{\vv}(X,h)$ in a neighbourhood of $[\mathscr{F}]$. 

It follows that, under Assumption \ref{ass:main-assumption}, the Yoneda pairing defines a linear map
    \begin{equation}\label{eq:definition-Upsilon}
        \Upsilon_{\mathscr{F}} \colon \bwed^2 \Ext^1(\mathscr{F}, \mathscr{F})\to \Ext^2(\mathscr{F}, \mathscr{F}), \quad e_1\wedge e_2\mapsto e_1\cup e_2. 
    \end{equation} 
Fix $n\geq 2$. Our aim is to understand deformations of $\mathscr{F}^{\oplus n}$ and the geometry of the direct-sum morphism 
    \begin{equation}\label{eq:direct-sum-morphism2}
        \Sym^n M_\vv(X,h)\to M_{n\vv}(X,h), \quad \sum_{i=1}^n [\mathscr{F}_i] \mapsto [\mathscr{F}_1\oplus \cdots \oplus \mathscr{F}_n]
    \end{equation}
in a neighbourhood of the point $n[\mathscr{F}]$. 

As in Section \ref{section:local-structure-pi}, set $G\coloneqq \operatorname{Aut}(\mathscr{F}^{\oplus n})\cong \operatorname{GL}_n$ with Lie algebra $\mathfrak{g}\coloneqq \operatorname{Lie}(G)$.  For each $k\geq 0$ there is a canonical isomorphism of $G$-representations
    \begin{equation*} 
        \Ext^k(\mathscr{F}^{\oplus n}, \mathscr{F}^{\oplus n})\cong  \Ext^k(\mathscr{F}, \mathscr{F})\otimes\mathfrak{g},
    \end{equation*}
where $G$ acts on $\mathfrak{g}$ via the adjoint action. 

Since the Yoneda pairing on $\Ext^1(\mathscr{F}, \mathscr{F})$ is skew-symmetric, we can write $\mu\coloneqq \mu_{\mathscr{F}^{\oplus n}}$ from \eqref{eq:Yoneda-square} as
    \begin{equation}\label{eq:definition-mu}
        \begin{split}
            \mu \colon \Ext^1(\mathscr{F},\mathscr{F})  & \otimes \mathfrak{g} \to \Ext^2(\mathscr{F},\mathscr{F}) \otimes \mathfrak{g} \\
            \sum_i e_i \otimes A_i &\mapsto \frac{1}{2} \sum_{i,j} (e_i \cup e_j) \otimes [A_i,A_j].
        \end{split}
    \end{equation}
Setting $V\coloneqq \Ext^1(\mathscr{F}, \mathscr{F})$ with $\dim V=d$, we observe that  $\mu$ factors through the commutator morphism \eqref{eq:commutator-morphism}, so that we get a commutative diagram
    \begin{equation}\label{eq:crucial-factorisation}
        \begin{tikzcd}[column sep=small]
            V\otimes \mathfrak{g} \ar[rr, "\mu"] \ar[dr, "\chi", swap]  & & \Ext^2(\mathscr{F}, \mathscr{F})\otimes \mathfrak{g} \\
            & \bwed^2 V\otimes \mathfrak{g} \ar[ur, "\Upsilon_{\mathscr{F}} \otimes \mathrm{id}_{\mathfrak{g}}", swap] &
        \end{tikzcd}
    \end{equation}
In particular, using the notation of Section \ref{section:commuting-scheme}, we have a $G$-equivariant closed embedding $\mathfrak{C}(\mathfrak{g}, V)= \chi^{-1}(0)\subseteq \mu^{-1}(0)$. This turns out to be the local model of \eqref{eq:direct-sum-morphism2}:

\begin{lemma}\label{lem:local-structure-direct-sum} 
    There exist pointed affine schemes $(\Spec A,x)$ and $(\Spec B, y)$ and a commutative diagram
        \begin{equation*}
            \begin{tikzcd}
                (\mathfrak{C}(\mathfrak{g}, V)/\!/G, 0) \ar[d]  & (\Spec A, x) \ar[d] \ar[r] \ar[l] &  (\Sym^n M_{\vv}(X,h), n [\mathscr{F}]) \ar[d] \\
                (\mu^{-1}(0)/\!/G, 0)  & (\Spec B, y) \ar[r] \ar[l] &  (M_{n \vv}(X,h), [\mathscr{F}^{\oplus n}])
            \end{tikzcd} 
        \end{equation*}
    where the horizontal arrows are \'etale. In particular \eqref{eq:direct-sum-morphism2} is a closed embedding in a neighbourhood of $[\mathscr{F}^{\oplus n}]\in M_{n\vv}(X,h)$.
\end{lemma}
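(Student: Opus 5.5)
We combine the étale local models of Section \ref{section:local-structure-pi} with the Chevalley isomorphism of Theorem \ref{thm:Vaccarino-Chevalley}.

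\textbf{Bottom row.} Lemma \ref{lem:local-structure-moduli}, applied to the polystable sheaf $\mathscr{F}^{\oplus n}$, gives an étale roof $(\mu^{-1}(0)/\!/G,0)\leftarrow(\Spec B,y)\to(M_{n\vv}(X,h),[\mathscr{F}^{\oplus n}])$. This needs $\RHom^\ast(\mathscr{F}^{\oplus n},\mathscr{F}^{\oplus n})$ to be formal. Associative formality supplies this: the dg algebra is $\RHom^\ast(\mathscr{F},\mathscr{F})\otimes\mathfrak{gl}_n$, i.e.\ matrices over the original algebra, and a formality quasi-isomorphism of associative algebras tensors with $M_n(\mathbb{C})$. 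This is where associative rather than Lie formality is used. The Kuranishi map is then $\mu$ as in \eqref{eq:definition-mu}.

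\textbf{Top row.} By Assumption \ref{ass:main-assumption}, $\mu_{\mathscr{F}}\equiv 0$ and $\mathrm{Aut}(\mathscr{F})=\mathbb{G}_m$ acts trivially. So $M_\vv(X,h)$ is étale-locally $(V,0)$ at $[\mathscr{F}]$, and $M_\vv(X,h)^n$ is étale-locally $V^n\cong V\otimes\mathfrak{t}$, equivariantly for $\mathfrak{S}_n=W$. Passing to $\mathfrak{S}_n$-quotients, I would use that an étale $W$-equivariant neighbourhood which is stabiliser-preserving at the fixed point descends to an étale map of quotients (Luna's fundamental lemma). This gives an étale roof $(V\otimes\mathfrak{t}/\!/W,0)\leftarrow(\Spec A,x)\to(\Sym^nM_\vv(X,h),n[\mathscr{F}])$. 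Theorem \ref{thm:Vaccarino-Chevalley} then identifies $V\otimes\mathfrak{t}/\!/W$ with $\mathfrak{C}(\mathfrak{g},V)/\!/G$.

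\textbf{Commutativity.} The direct-sum morphism should correspond to the map induced by $V\otimes\mathfrak{t}\hookrightarrow V\otimes\mathfrak{g}$. Iterating Remark \ref{rmk:compatibility-direct sum} to $n$ summands, the map $M_\vv^n\to M_{n\vv}$ is modelled on the block-diagonal map $(\mu_{\mathscr{F}}^{-1}(0))^n=V\otimes\mathfrak{t}\to\mu^{-1}(0)/\!/G$. The lands in $\mathfrak{C}(\mathfrak{g},V)$ because diagonal tuples commute, so the map factors as $V\otimes\mathfrak{t}\to\mathfrak{C}(\mathfrak{g},V)/\!/G\to\mu^{-1}(0)/\!/G$. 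It is $W$-invariant, so it descends along $V\otimes\mathfrak{t}\to V\otimes\mathfrak{t}/\!/W$. I expect the main obstacle to be choosing the étale neighbourhoods so the whole diagram commutes. Remark \ref{rmk:compatibility-direct sum} only gives compatibility on completed local rings. I would first match the formal completions, then invoke Stacks Project Tag 0GDX (Artin approximation for morphisms of pointed étale germs) to shrink $\Spec A$ and $\Spec B$ until the square commutes. The $W$-equivariance is best handled before taking quotients, and this step needs care.

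\textbf{Closed embedding.} The map $\mathfrak{C}(\mathfrak{g},V)/\!/G\to\mu^{-1}(0)/\!/G$ is induced by the $G$-equivariant closed embedding $\mathfrak{C}(\mathfrak{g},V)\subseteq\mu^{-1}(0)$. Since $G$ is reductive, invariants are exact: $\mathbb{C}[\mu^{-1}(0)]^G\to\mathbb{C}[\mathfrak{C}]^G$ is surjective, so the induced map of quotients is a closed embedding. Being a closed embedding is étale-local on the target, and the top arrow is étale at $n[\mathscr{F}]$. Comparing complete local rings, the direct-sum morphism is therefore formally a closed immersion at $n[\mathscr{F}]$. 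It is finite, since it is a direct sum of quasi-finite proper maps, and unramified with injective fibre over $[\mathscr{F}^{\oplus n}]$. Hence it is a closed embedding over a neighbourhood of $[\mathscr{F}^{\oplus n}]$.
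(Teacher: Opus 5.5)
Your proposal is correct and follows essentially the paper's route. Associative formality passes to $\RHom^\ast(\mathscr{F},\mathscr{F})\otimes M_n(\mathbb{C})$. Via Remark~\ref{rmk:compatibility-direct sum} and Tag 0GDX, the direct-sum map is modelled on $V\otimes\mathfrak{t}/\!/W\cong\mathfrak{C}(\mathfrak{g},V)/\!/G\hookrightarrow\mu^{-1}(0)/\!/G$, using Theorem~\ref{thm:Vaccarino-Chevalley}. Finiteness together with the formal closed-immersion property at $n[\mathscr{F}]$ then gives the conclusion.

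Your extra details are sound fillings-in of steps the paper only sketches:
\begin{itemize}
\item Luna's fundamental lemma to descend the chart to $\Sym^n M_{\vv}(X,h)$.
\item Exactness of $G$-invariants, so that the map of quotients is a closed embedding.
\item Using that the fibre over $[\mathscr{F}^{\oplus n}]$ is the single point $n[\mathscr{F}]$, in place of universal injectivity nearby.
\end{itemize}
One wording fix: ``direct sum of quasi-finite proper maps'' should simply say that the morphism is proper and quasi-finite by uniqueness of Jordan--H\"older factors, hence finite.
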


\begin{proof}
    Since we have an isomorphism $\RHom^\ast(\mathscr{F}^{\oplus n}, \mathscr{F}^{\oplus n})\cong \RHom^\ast (\mathscr{F}, \mathscr{F})\otimes M_{n}(\mathbb{C})$ of differential graded associative algebras, our formality assumption on $\mathscr{F}$ is inherited by $\mathscr{F}^{\oplus n}$,  so we can apply Lemma \ref{lem:local-structure-moduli} to $\mathscr{F}^{\oplus n}$ as well. 

    By Remark \ref{rmk:compatibility-direct sum} the block-diagonal inclusion \eqref{block-diagonal-inclusion}, which gives a local model for the direct-sum morphism
        \begin{equation*}
            \prod_{i=1}^n \mathscr{M}_\vv(X,h)\to \mathscr{M}_{n\vv}(X,h), \quad (\mathscr{F}_1,\dots, \mathscr{F}_n)\mapsto \mathscr{F}_1\oplus \cdots \oplus \mathscr{F}_n
        \end{equation*}
    corresponds precisely to the closed embedding $V\otimes \mathfrak{t}\to\mathfrak{C}(\mathfrak{g}, V)\subseteq V\otimes \mathfrak{g}$, as in \eqref{eq:block-diagonal-immersion}. Passing to affine GIT quotients, we obtain a factorisation
        \begin{equation*}
            V\otimes \mathfrak{t}/\!/ W\to \mathfrak{C}(\mathfrak{g}, V)/\!/ G\to \mu^{-1}(0)/\!/G. 
        \end{equation*}
    The first map is an isomorphism by Theorem \ref{thm:Vaccarino-Chevalley}. Since by Lemma \ref{lem:local-structure-moduli} the symmetric product $\Sym^n M_{\vv}(X,h)$ is \'etale-locally around the point $n [\mathscr{F}]$ modelled on a neighbourhood of the origin in the quotient $V\otimes \mathfrak{t}/\!/ W\cong \mathfrak{C}(\mathfrak{g}, V)/\!/G$, the direct-sum morphism $\Sym^n M_{\vv}(X,h)\to M_{n\vv}(X,h)$ corresponds to the closed embedding of good quotients $\mathfrak{C}(\mathfrak{g}, V)/\!/ G\to \mu^{-1}(0)/\!/G$. This shows that \eqref{eq:direct-sum-morphism2} is unramified at $n[\mathscr{F}]$. Since it is finite, and it is universally injective in a neighbourhood of $n[\mathscr{F}]$ by uniqueness of the Jordan--H\"older factors, we conclude it is a closed embedding in a neighbourhood of $[\mathscr{F}^{\oplus n}]\in M_{n\vv}(X,h)$ \cite[\href{https://stacks.math.columbia.edu/tag/04DG}{Tag 04DG}]{StacksProject}.
\end{proof}


\section{Semi-rigid stable sheaves}\label{section:semirigid}

In this section, we introduce the notion of semi-rigidity for stable sheaves and relate the behaviour of the direct-sum morphisms to the existence of decomposable elements in $\ker(\Upsilon_{\mathscr{F}})$, thus proving Theorems \ref{thm:intro:semirigid} and \ref{thm:intro:split}. 

\begin{definition}
    Let $\mathscr{F}$ be a stable sheaf on a smooth polarised variety $(X,h)$ with Mukai vector $\mathbf{v}\in H^\ast(X, \mathbb{Q})$, satisfying Assumption \ref{ass:main-assumption}. We say $\mathscr{F}$ is \emph{semi-rigid} if every Jordan--H\"older factor of any sufficiently small deformation of $\mathscr{F}\oplus \mathscr{F}$ is a deformation of $\mathscr{F}$. 
\end{definition}

After Lemma \ref{lem:local-structure-direct-sum}, this is equivalent to requiring that the direct-sum morphism
        \begin{equation}\label{eq:direct-sum-n=2}
            \Sym^2 M_{\vv}(X,h)\to M_{2\vv}(X,h)
        \end{equation}
induces a homeomorphism from some neighbourhood of $2[\mathscr{F}]\in \Sym^2 M_{\vv}(X,h)$ onto a neighbourhood of $[\mathscr{F}\oplus \mathscr{F}]\in M_{2\vv}(X,h)$; equivalently, that the image of \eqref{eq:direct-sum-n=2} contains an open neighbourhood of $[\mathscr{F}\oplus \mathscr{F}]$. 

This implies, in particular, that $\mathscr{F}\oplus \mathscr{F}$ has no stable deformations. Since \eqref{eq:direct-sum-n=2} is locally a closed embedding by Lemma \ref{lem:local-structure-direct-sum}, semi-rigidity implies it is a universal homeomorphism around $[\mathscr{F}\oplus \mathscr{F}]$: it is simply the inclusion of the reduced structure $M_{2\vv}(X,h)_{\mathrm{red}}\subseteq M_{2\vv}(X,h)$. 

We are now ready to prove Theorem \ref{thm:intro:semirigid}, relating semi-rigidity to the kernel of the Yoneda pairing \eqref{eq:definition-Upsilon}:

\begin{theorem}[Thm.\@\  \ref{thm:intro:semirigid}]\label{thm:semirigid} 
    Let $\mathscr{F}$ be a stable sheaf on a smooth polarised variety $(X,h)$ satisfying \textnormal{Assumption \ref{ass:main-assumption}}. Then $\mathscr{F}$ is semi-rigid if and only if $\ker(\Upsilon_{\mathscr{F}})$ contains no non-zero decomposable element.
\end{theorem}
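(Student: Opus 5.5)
By Lemma \ref{lem:local-structure-direct-sum} (with $n=2$), the problem reduces to the local model. Set $G=\mathrm{GL}_2$ and $V=\Ext^1(\mathscr F,\mathscr F)$. Semi-rigidity holds if and only if the closed embedding $\mathfrak C(\mathfrak g,V)/\!/G\hookrightarrow \mu^{-1}(0)/\!/G$ is a homeomorphism near $0$. Both quotients are cones: $\mu$ and $\chi$ are homogeneous quadratic, and the scaling action commutes with $G$. So "homeomorphism near $0$" is equivalent to set-theoretic equality $\chi^{-1}(0)/\!/G=\mu^{-1}(0)/\!/G$. In turn, this says that every point of $\mu^{-1}(0)$ with closed $G$-orbit lies in $\mathfrak C(\mathfrak g,V)$. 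I therefore need to classify closed orbits in $\mu^{-1}(0)\subseteq V\otimes\mathfrak{gl}_2$.

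\emph{Decomposition.} I write $\mathfrak{gl}_2=\mathbb C\cdot\mathrm{id}\oplus\mathfrak{sl}_2$ and split an element as $e=a\otimes\mathrm{id}+\xi$ with $\xi\in V\otimes\mathfrak{sl}_2$. The scalar part commutes with everything, so $\mu(e)=\mu(\xi)$ and $\chi(e)=\chi(\xi)$, and the $G$-action only moves $\xi$. The question is thus whether every closed $\mathrm{PGL}_2$-orbit in $\mu^{-1}(0)\cap (V\otimes\mathfrak{sl}_2)$ is commuting.

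\emph{Orbits by rank.} Regard $\xi$ as a linear map $\xi\colon V^\vee\to\mathfrak{sl}_2$ and split into cases by its rank.
\begin{itemize}
\item If the image has dimension $\le1$, then $\xi=v\otimes A$ and $\xi$ commutes automatically.
\item If the image has dimension $2$, write $\xi=v_1\otimes A_1+v_2\otimes A_2$ with $A_1,A_2$ independent and $v_1,v_2$ independent. Then $\chi(\xi)=(v_1\wedge v_2)\otimes[A_1,A_2]$ with $[A_1,A_2]\neq0$, since two independent elements of $\mathfrak{sl}_2$ never commute. Also $\mu(\xi)=\Upsilon(v_1\wedge v_2)\otimes[A_1,A_2]$. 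So $\xi\in\mu^{-1}(0)\setminus\mathfrak C$ exactly when $v_1\wedge v_2\in\ker\Upsilon_{\mathscr F}$.
\item If the image has dimension $3$, write $\xi=v_1\otimes H+v_2\otimes E+v_3\otimes F$. Using $[H,E]=2E$, $[H,F]=-2F$, $[E,F]=H$, the condition $\mu=0$ forces $\Upsilon(v_1\wedge v_2)=\Upsilon(v_1\wedge v_3)=\Upsilon(v_2\wedge v_3)=0$. This again produces non-zero decomposables in the kernel.
\end{itemize}
In all non-commuting cases I therefore get a non-zero decomposable element of $\ker\Upsilon_{\mathscr F}$, and the rank-$2$ case shows the converse.

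\emph{Closed orbits (main obstacle).} It remains to show that a decomposable $v_1\wedge v_2\ne0$ in $\ker\Upsilon$ yields a point with \emph{closed} orbit outside $\mathfrak C$. Equivalently, it yields a point in the image of $\mu^{-1}(0)\to\mu^{-1}(0)/\!/G$ not coming from $\Sym^2$. I would take $\xi=v_1\otimes E+v_2\otimes F$, or $v_1\otimes(E+F)+v_2\otimes H$, and show its $\mathrm{SL}_2$-orbit is closed. By Hilbert--Mumford, a non-closed orbit has a $1$-parameter subgroup $\lambda$ with a limit. Since $\lambda$ fixes a Borel, this forces $\xi$ to be conjugate into $V\otimes\mathfrak b$, i.e.\ the matrices $A_1,A_2$ would be simultaneously upper triangular. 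But $E$ and $F$ (equivalently $H$ and $E+F$) are not simultaneously triangularisable, so the orbit is closed. Alternatively, I can compute the invariant $\operatorname{tr}(A_1A_2)$ and its relatives: for $\xi$ these differ from any value attained on $V\otimes\mathfrak t$, because on the Sym side $\operatorname{tr}$ of products is governed by the diagonal, and rank considerations separate the two.

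Conversely, if $\ker\Upsilon$ has no decomposables, the case analysis shows $\mu^{-1}(0)\cap(V\otimes\mathfrak{sl}_2)=\mathfrak C\cap(V\otimes\mathfrak{sl}_2)$ set-theoretically. Hence the quotients agree as sets and $\mathscr F$ is semi-rigid.
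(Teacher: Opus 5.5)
Your argument is correct. Its skeleton matches the paper's: reduce via Lemma~\ref{lem:local-structure-direct-sum} to the inclusion $\mathfrak{C}(\mathfrak{gl}_2,V)\subseteq\mu^{-1}(0)$, use conicity, and test with $v_1\otimes E+v_2\otimes F$. Both key steps, however, are done differently.

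For ``no decomposables implies $\mu^{-1}(0)=\mathfrak{C}(\mathfrak{gl}_2,V)$'', the paper contracts $\chi(\alpha)$ against a suitable $H\in\mathfrak{sl}_2$ via the trace form. It then bounds the rank of $(\operatorname{tr}([A_i,A_j]H))_{ij}$ by $\dim[\alpha,H]\le\dim\mathcal{O}_H=2$. You instead stratify by the rank of $\xi\colon V^\vee\to\mathfrak{sl}_2$ and read decomposables directly off the tensor decomposition, using that non-proportional elements of $\mathfrak{sl}_2$ never commute. In effect, $\chi(\xi)\in\bwed^2W\otimes\mathfrak{sl}_2$ with $\dim W=\rk\xi\le 3$, and every element of $\bwed^2W$ is decomposable. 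Your version is more explicit: it describes $\mu^{-1}(0)\setminus\mathfrak{C}(\mathfrak{gl}_2,V)$ completely, and a rank-$3$ point even yields a $3$-dimensional isotropic subspace for the Yoneda pairing. But it is tied to $\dim\mathfrak{sl}_2=3$. The paper's contraction argument instead isolates the adjoint-orbit dimension as the exact place where $n=2$ is used, which is what its footnote on $n\ge 3$ refers to.

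For non-surjectivity on quotients, the paper exhibits the invariant $\operatorname{tr}\langle\phi,\chi(-)\rangle^k$, which vanishes on $\mathfrak{C}(\mathfrak{gl}_2,V)$ but not at $\alpha$. You instead prove that the orbit of $\alpha$ is closed, via Hilbert--Mumford and the non-triangularisability of $E,F$. This is essentially the mechanism the paper uses later for Theorem~\ref{thm:intro:split}.(ii): an irreducible $\mathfrak{sl}_2$-module has a closed orbit. It unifies the two arguments, at the price of a heavier tool than a one-line invariant computation.

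Your sketched alternative via traces also works, though as written it is only a sketch and is not needed. For the pair $(E,F)$ one has $\operatorname{tr}A_i=\operatorname{tr}A_i^2=0$ and $\operatorname{tr}(A_1A_2)=1$, which is impossible for a simultaneously triangularisable pair.
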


\begin{proof}  
    We use the notation established at the beginning of Section \ref{section:local-structure-direct sum}. Assume that \eqref{eq:direct-sum-n=2} is not locally surjective at $2[ \mathscr{F}]$. By Lemma \ref{lem:local-structure-direct-sum}, we deduce that the $\mathrm{GL}_2$-equivariant closed embedding $\mathfrak{C}(\mathfrak{gl}_2, V)\to \mu^{-1}(0)$ is not surjective. 
        
    By fixing a basis of $V=\langle e_1, \dots, e_d \rangle$, this means there exists a $d$-tuple of matrices $\alpha\coloneqq(A_1,\dots, A_d)\in \mathfrak{gl}_2^d$ with $\mu(\alpha)=0$ but $\chi(\alpha)\neq 0$. Now using that $\operatorname{tr}(XY)$ is a non-degenerate bilinear form on $\mathfrak{sl}_2= [\mathfrak{gl}_2,\mathfrak{gl}_2]$, there exists a non-zero element $H\in \mathfrak{sl}_2$ such that contracting $\chi(\alpha)\in \bwed^2 V\otimes \mathfrak{sl}_2$ with $H$ yields a non-zero element
        \begin{equation*}
            \langle \chi(\alpha), H \rangle\coloneqq \sum_{i< j}\operatorname{tr}([A_i,A_j]H) \ e_i\wedge e_j\in \ker (\Upsilon_{\mathscr{F}})\subseteq \bwed^2 V.
        \end{equation*}
    By thinking of it as a skew-symmetric matrix on $V^\ast$, we see that it is decomposable if and only if the rank of the matrix with entries $\operatorname{tr}([A_i, A_j]H)$ is $2$.
    Since for every $i,j$ one has $\operatorname{tr}([A_i, A_j]H)=\operatorname{tr}(A_i[A_j, H])$, this shows that
        \begin{equation*} 
            \rk (\operatorname{tr}([A_i, A_j]H))_{ij} \leq \dim [\alpha, H], 
        \end{equation*}
    where $[\alpha, H]\subseteq \mathfrak{sl}_2$ is the linear subspace generated by the commutators $[A_i, H]$, for $1\leq i\leq d$. One trivially has $[\alpha, H]\subseteq [\mathfrak{gl}_2, H]$, so that $\dim [\alpha, H]\leq \dim \mathcal{O}_H$, where $\mathcal{O}_H$ is the adjoint orbit of $H$. Since any non-zero element $H\in \mathfrak{sl}_2$ has $\dim \mathcal{O}_H=2$, this shows that $\langle \chi(\alpha), H \rangle \in \ker(\Upsilon_{\mathscr{F}})$ is decomposable\footnote{Note that the rough estimate $\dim[\alpha, H]\leq \dim \mathcal{O}_H$ does not help in constructing a low-rank element in $\ker (\Upsilon_{\mathscr{F}})$ when $n\geq 3$, as the minimal adjoint orbit dimension is $2n-2$, so one would need a better estimate for $\dim [\alpha, H]$ for $\alpha\in \mu^{-1}(0)$.}. 
        
    Conversely, assume there exists a non-zero $e_1\wedge e_2\in \ker(\Upsilon_{\mathscr{F}})$ and let $\{X,Y,H\}\subseteq \mathfrak{gl}_2$ be an $\mathfrak{sl}_2$-triple. Then the element
        \begin{equation*}
            \alpha\coloneqq e_1\otimes X+e_2\otimes Y\in V\otimes \mathfrak{gl}_2
        \end{equation*}
    satisfies $\mu(\alpha)=0$ but $\chi(\alpha)=(e_1\wedge e_2)\otimes H\neq 0$. Contracting with a linear form $\phi\in (\bwed^2 V)^\ast$ which is non-zero on $e_1\wedge e_2$, the $\mathrm{GL}_2$-invariant function $\operatorname{tr}\langle \phi, \chi(-) \rangle^k$ does not vanish at $\alpha$ for some $k\geq 2$, but it is identically zero on $\mathfrak{C}(\mathfrak{gl}_2, V)$. This shows that the closed embedding 
        \begin{equation}\label{eq:local-model-direct-sum}
            \mathfrak{C}(\mathfrak{gl}_2, V)/\!/\mathrm{GL}_2\to \mu^{-1}(0)/\!/\mathrm{GL}_2
        \end{equation}
    is not surjective. Since \eqref{eq:local-model-direct-sum} is $\mathbb{G}_m$-equivariant, it is not surjective in an arbitrary neighbourhood of $0\in \mu^{-1}(0)/\!/\mathrm{GL}_2$, and hence we can deduce that $\Sym^2 M_{\vv}(X,h)\to M_{2\vv}(X,h)$ cannot be surjective around $2[\mathscr{F}]\in \Sym^2 M_{\vv}(X,h)$ by Lemma \ref{lem:local-structure-direct-sum}.
\end{proof}

\begin{proposition}\label{prop:generic-vanishing}
    Let $\mathscr{F}$ be a semi-rigid stable sheaf on a smooth polarised variety $(X,h)$, with Mukai vector $\vv\in H^\ast(X, \mathbb{Q})$. Then the following hold:
        \begin{enumerate}[label={\upshape(\roman*)}]
            \item The morphism $\pi\colon \mathscr{M}_{2\vv}(X,h)\to M_{2\vv}(X,h)$ is a coarse moduli space in a neighbourhood of $[\mathscr{F}\oplus \mathscr{F}]$, away from the diagonal. In other words, every semistable sheaf with non-isomorphic Jordan--H\"older factors near $\mathscr{F}$ is polystable. 
            
            \item For $([\mathscr{F}_1], [\mathscr{F}_2])\in M^s_{\mathbf{v}}(X,h)\times M^s_{\mathbf{v}}(X,h)$ in an open neighbourhood of $([\mathscr{F}], [\mathscr{F}])$, with $\mathscr{F}_1\ncong \mathscr{F}_2$ we have
                \begin{equation*}
                    \Ext^1(\mathscr{F}_1, \mathscr{F}_2)=0, \quad \Ext^1(\mathscr{F}_2, \mathscr{F}_1)=0.
                \end{equation*}
        \end{enumerate}
\end{proposition}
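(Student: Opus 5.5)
Part (i) is the main work: combine semi-rigidity with Proposition \ref{prop:geometric-quotient-away-diagonal}. Part (ii) then follows by reading off $\Ext^1$ at a polystable point.

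For (i), I would use Lemma \ref{lem:local-structure-moduli} for $\mathscr{F}\oplus\mathscr{F}$ (formality passes to direct sums, as in the proof of Lemma \ref{lem:local-structure-direct-sum}). Near $[\mathscr{F}\oplus\mathscr{F}]$, the map $\pi\colon\mathscr{M}_{2\vv}\to M_{2\vv}$ is then étale-locally modelled on $[\mu^{-1}(0)/\mathrm{GL}_2]\to\mu^{-1}(0)/\!/\mathrm{GL}_2$, with Cartesian squares.

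Semi-rigidity implies, via Lemma \ref{lem:local-structure-direct-sum} and Theorem \ref{thm:semirigid}, that the closed embedding $\mathfrak{C}(\mathfrak{gl}_2,V)/\!/\mathrm{GL}_2\to\mu^{-1}(0)/\!/\mathrm{GL}_2$ is a homeomorphism near $0$. By $\mathbb{G}_m$-equivariance it is a homeomorphism globally.

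The next step is to upgrade this to the stack level. I need every point of $\mu^{-1}(0)$ lying over the regular locus $U$ (distinct joint eigenvalues) to actually lie in $\mathfrak{C}(\mathfrak{gl}_2,V)$. Let $\alpha\in\mu^{-1}(0)$ map to $U$. The closed orbit in $\overline{G\alpha}$ is diagonal with distinct eigenvalues. For $\mathrm{GL}_2$, a point whose semisimplification is a pair of distinct characters is either diagonal or conjugate to an upper triangular tuple $(A_i)$ with distinct diagonal pairs. Write
\[
A_i=\begin{pmatrix}a_i&b_i\\0&c_i\end{pmatrix},\qquad \lambda_i=a_i-c_i .
\]
Then $[A_i,A_j]$ has off-diagonal entry $\lambda_ib_j-\lambda_jb_i$, so
\[
\chi(\alpha)=\Bigl(\textstyle\sum_i\lambda_ie_i\Bigr)\wedge\Bigl(\textstyle\sum_j b_je_j\Bigr)\otimes E_{12}.
\]
Hence $\mu(\alpha)=0$ says exactly that this decomposable element lies in $\ker\Upsilon_{\mathscr{F}}$. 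The vector $\lambda=\sum_i\lambda_ie_i$ is nonzero by distinctness, so the no-decomposables hypothesis forces $b\in\langle\lambda\rangle$. In that case $\alpha$ is conjugate (by a unipotent element) to a diagonal tuple, so $\alpha$ is commuting and polystable.

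So $\mu^{-1}(0)$ and $\mathfrak{C}(\mathfrak{gl}_2,V)$ agree over $U$ at the level of points. Proposition \ref{prop:geometric-quotient-away-diagonal} then gives that $\pi$ is bijective on isomorphism classes of $\mathbb{C}$-points there, i.e.\ every semistable sheaf near $\mathscr{F}\oplus\mathscr{F}$ with non-isomorphic Jordan--H\"older factors is polystable. This direct upper-triangular computation is the step I expect to be the main obstacle: one must make sure non-closed orbits over $U$ are covered, which the triangular form handles. A possible nilpotent scheme structure should not matter, since coarseness only concerns points.

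For (ii), take $\mathscr{F}_1\not\cong\mathscr{F}_2$ near $\mathscr{F}$ and suppose $0\neq e\in\Ext^1(\mathscr{F}_2,\mathscr{F}_1)$. The corresponding nonsplit extension $0\to\mathscr{F}_1\to\mathscr{E}\to\mathscr{F}_2\to0$ is semistable with Mukai vector $2\vv$ and Jordan--H\"older factors $\mathscr{F}_1,\mathscr{F}_2$. It is not polystable, since it is non-split. It also lies near $\mathscr{F}\oplus\mathscr{F}$, because it specialises to $\mathscr{F}_1\oplus\mathscr{F}_2$, which is near $\mathscr{F}\oplus\mathscr{F}$, and $\pi^{-1}$ of a neighbourhood is open. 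This contradicts (i), so $\Ext^1(\mathscr{F}_2,\mathscr{F}_1)=0$. The vanishing of $\Ext^1(\mathscr{F}_1,\mathscr{F}_2)$ follows by symmetry.
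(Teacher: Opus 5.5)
Your proof is correct. Part (ii) is the same as the paper's argument. For (i) the overall architecture also matches the paper: pass to the local model $[\mu^{-1}(0)/\mathrm{GL}_2]\to\mu^{-1}(0)/\!/\mathrm{GL}_2$, show that over the regular locus $U$ the points of $\mu^{-1}(0)$ are those of $\mathfrak{C}(\mathfrak{gl}_2,V)$, and conclude with Proposition \ref{prop:geometric-quotient-away-diagonal}. You justify the key step differently, though.

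The paper simply reuses the proof of Theorem \ref{thm:semirigid}. There, any $\alpha\in\mu^{-1}(0)$ with $\chi(\alpha)\neq 0$ is contracted against a suitable $H\in\mathfrak{sl}_2$, and the rank bound $\dim[\mathfrak{gl}_2,H]=2$ then gives a non-zero decomposable element of $\ker(\Upsilon_{\mathscr F})$. Hence under semi-rigidity the embedding $\mathfrak{C}(\mathfrak{gl}_2,V)\subseteq\mu^{-1}(0)$ is a bijection on \emph{all} points, including those over the diagonal. With that, non-closed orbits over $U$ are handled automatically, because commuting tuples with distinct joint eigenvalues are diagonalisable.

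You instead restrict to points over $U$ and put them in upper triangular form, using the standard fact that the closed orbit in $\overline{G\alpha}$ is the semisimplification of $\alpha$ as a representation of the free algebra. You then compute $\chi(\alpha)=(\lambda\wedge b)\otimes E_{12}$ explicitly.

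This route is more elementary and self-contained, and it gives polystability of such $\alpha$ directly, so the appeal to Proposition \ref{prop:geometric-quotient-away-diagonal} becomes almost a formality. It also pinpoints the obstruction exactly. The computation is reversible: a non-zero decomposable $\lambda\wedge b\in\ker(\Upsilon_{\mathscr F})$ yields the non-polystable point $\lambda\otimes E_{11}+b\otimes E_{12}\in\mu^{-1}(0)$ lying over $U$.

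The trade-off is that you only get equality of points over $U$ rather than globally. That is, however, all that (i) and (ii) require.
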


\begin{proof}
    Once again, we follow the setup established in Section \ref{section:local-structure-direct sum}. By Theorem \ref{thm:semirigid} and its proof, we know that if $\mathscr{F}$ is semi-rigid, then the $\mathrm{GL}_2$-equivariant closed embedding $\mathfrak{C}(\mathfrak{gl}_2,V)\subseteq \mu^{-1}(0)$ is surjective. Hence we deduce from Proposition \ref{prop:geometric-quotient-away-diagonal} and the commutative diagram
    \begin{equation*}
        \begin{tikzcd}
            {[}\mathfrak{C}(\mathfrak{gl}_2, V)/\mathrm{GL}_2{]} \ar[d]\ar[r] & {[}\mu^{-1}(0)/\mathrm{GL}_2{]} \ar[d] \\
            \mathfrak{C}(\mathfrak{gl}_2, V)/\!/\mathrm{GL}_2 \ar[r] & \mu^{-1}(0)/\!/\mathrm{GL}_2 
        \end{tikzcd}
    \end{equation*}
    that $[\mu^{-1}(0)/\mathrm{GL}_2]\to \mu^{-1}(0)/\mathrm{GL}_2$ is also a coarse moduli space away from the diagonal. This allows us to deduce $\mathrm{(i)}$ from Lemma \ref{lem:local-structure-moduli}. 

    For $\mathrm{(ii)}$, note that if $\mathscr{F}_1$ and $\mathscr{F}_2$ are stable non-isomorphic sheaves with invariants $\mathbf{v}\in H^\ast(X, \mathbb{Q})$ on $(X,h)$, any non-trivial extension 
        \begin{equation*}
            0\to \mathscr{F}_2\to \mathscr{G}\to \mathscr{F}_1\to 0
        \end{equation*}
    gives rise to a strictly semistable sheaf which maps to $[\mathscr{F}_1\oplus \mathscr{F}_2]$ under the morphism $\pi\colon \mathscr{M}_{2\mathbf{v}}(X,h)\to M_{2\mathbf{v}}(X,h)$. If the point $([\mathscr{F}_1],[\mathscr{F}_2])\in M^s_{\mathbf{v}}(X,h)\times M^s_\vv(X,h)$ is sufficiently close to $([\mathscr{F}], [\mathscr{F}])$, this contradicts the fact that $\pi$ is a coarse moduli space near $[\mathscr{F}\oplus \mathscr{F}]$. Hence $\Ext^1(\mathscr{F}_1, \mathscr{F}_2)=0$.
\end{proof}

Using the generic vanishing established in Proposition \ref{prop:generic-vanishing}, we can now address the local structure of $M_{n\vv}(X,h)$ at $[\mathscr{F}^{\oplus n}]$ for $n\geq 3$:

\begin{theorem}[Thm.\@ \ref{thm:intro:split}.(i)]\label{thm:split}
    Let $\mathscr{F}$ be a semi-rigid stable sheaf on a smooth polarised variety $(X,h)$, with Mukai vector $\vv\in H^\ast(X, \mathbb{Q})$. Then for every $n\ge 2$ the direct-sum morphism 
        \begin{equation}\label{eq:direct-sum-thm-split}
            \Sym^n M_{\vv}(X,h)\to M_{n\vv}(X,h)
        \end{equation}
    is the embedding of an irreducible component in a neighbourhood of $[\mathscr F^{\oplus n}] \in M_{n\vv}(X,h)$.
\end{theorem}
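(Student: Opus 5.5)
The plan is to combine Lemma \ref{lem:local-structure-direct-sum} with the generic vanishing of Proposition \ref{prop:generic-vanishing}. By Lemma \ref{lem:local-structure-direct-sum}, the morphism \eqref{eq:direct-sum-thm-split} is a closed embedding near $n[\mathscr{F}]$. Its source is irreducible near that point. Indeed, by Assumption \ref{ass:main-assumption}(ii), $M_{\vv}(X,h)$ is smooth of dimension $d=\dim\Ext^1(\mathscr{F},\mathscr{F})$ at $[\mathscr{F}]$. Hence, locally, $\Sym^n M_\vv(X,h)$ is the quotient of a smooth irreducible germ of dimension $nd$ by $\mathfrak{S}_n$, so it is normal and irreducible of dimension $nd$; equivalently, $V\otimes\mathfrak{t}/\!/W$ is irreducible. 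Writing $Z$ for the image, it therefore suffices to show that $Z$ contains a non-empty open subset of $M_{n\vv}(X,h)$ in every neighbourhood of $[\mathscr{F}^{\oplus n}]$. Then $Z$ is an irreducible closed subset containing a dense subset that is open in $M_{n\vv}(X,h)$, so it is an irreducible component locally.

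For the open subset I take the locus of points $[\mathscr{F}_1\oplus\cdots\oplus\mathscr{F}_n]$ with the $\mathscr{F}_i$ stable, pairwise non-isomorphic, and close to $\mathscr{F}$. This locus is dense in $Z$, being the complement of the big diagonal. At such a point the polystable sheaf $\mathscr{E}=\bigoplus_i\mathscr{F}_i$ has $\operatorname{Aut}(\mathscr{E})=\mathbb{G}_m^n$. We have
\begin{equation*}
\Ext^1(\mathscr{E},\mathscr{E})=\bigoplus_{i,j}\Ext^1(\mathscr{F}_i,\mathscr{F}_j).
\end{equation*}
Proposition \ref{prop:generic-vanishing}(ii), applied to each pair $(\mathscr{F}_i,\mathscr{F}_j)$ with $i\ne j$ (all of which lie near $([\mathscr{F}],[\mathscr{F}])$ after shrinking), kills every off-diagonal summand. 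Hence $\Ext^1(\mathscr{E},\mathscr{E})=\bigoplus_i\Ext^1(\mathscr{F}_i,\mathscr{F}_i)$, which has dimension $nd$ by upper semicontinuity and smoothness of $M_\vv$ near $[\mathscr{F}]$. Moreover the torus $\mathbb{G}_m^n$ acts on it with weights $t_jt_i^{-1}=1$, i.e.\ trivially.

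Next I bound the dimension of $M_{n\vv}(X,h)$ at $[\mathscr{E}]$. I would use the general local description from Section \ref{section:local-structure-pi}, and I want to avoid formality for $\mathscr{E}$ itself. The hull is $\mathrm{Def}_{\mathscr{E}}=\operatorname{Spf}R$, with $R$ a quotient of the power series ring on $\Ext^1(\mathscr{E},\mathscr{E})$, and $\hat{\mathscr{O}}_{M_{n\vv},[\mathscr{E}]}\cong R^{\mathbb{G}_m^n}$. Since the torus acts trivially on the tangent space, and the action on $R$ can be linearised compatibly with it, we get $R^{\mathbb{G}_m^n}=R$. Therefore $\dim \hat{\mathscr{O}}_{M_{n\vv},[\mathscr{E}]}\le nd$. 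On the other hand, $Z$ is a closed subscheme through $[\mathscr{E}]$ of dimension $nd$ at that point, since the direct-sum map is étale-locally an embedding of $\prod_i\mathrm{Def}_{\mathscr{F}_i}$ there. It follows that $Z$ and $M_{n\vv}(X,h)$ agree set-theoretically near $[\mathscr{E}]$. In fact $R$ is then a power series ring, so they agree scheme-theoretically as well. Thus $Z$ contains an open neighbourhood of every such point, which gives the required open subset.

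The main obstacle I expect is the bookkeeping of neighbourhoods. Proposition \ref{prop:generic-vanishing}(ii) is only asserted on some open neighbourhood of $([\mathscr{F}],[\mathscr{F}])$, so I must shrink once so that all pairs $(\mathscr{F}_i,\mathscr{F}_j)$ from a neighbourhood of $n[\mathscr{F}]$ land in it; this is fine since there are only finitely many pairs. A second, smaller point is the claim that the invariant ring equals $R$ when the reductive stabiliser acts trivially on $\Ext^1$. I would justify it by the $G$-equivariant Kuranishi presentation \eqref{eq:Kuranishi-map}: there $G$ acts linearly on $\widehat{\Ext}{}^1$, hence trivially on the whole formal neighbourhood. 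If that felt too delicate, I would instead use the étale slice of Lemma \ref{lem:local-structure-moduli} (without formality, via \cite{AlperHallRydhLunaEtale}), where the slice has tangent space $\Ext^1(\mathscr{E},\mathscr{E})$ with trivial torus action.
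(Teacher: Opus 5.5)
Your proposal is correct and follows essentially the same route as the paper: the closed embedding from Lemma \ref{lem:local-structure-direct-sum}, then Proposition \ref{prop:generic-vanishing}(ii) to kill the off-diagonal $\Ext^1$ at nearby polystable points $\bigoplus_i \mathscr{F}_i$ with pairwise non-isomorphic summands, and the trivial action of $\mathbb{G}_m^n$ to conclude that the direct-sum map is a local isomorphism there. The only difference is cosmetic: you get the local isomorphism from the dimension count $\dim R\le nd=\dim_{[\mathscr{E}]}Z$, whereas the paper observes directly that $\prod_i \mathrm{Def}_{\mathscr{F}_i}\to \mathrm{Def}_{\mathscr{G}}$ has a smooth source and a bijective tangent map. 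You also make explicit the local irreducibility of the source, which the paper leaves implicit.
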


\begin{proof} 
    By Lemma \ref{lem:local-structure-direct-sum} we know that \eqref{eq:direct-sum-thm-split} is a closed embedding around $n[\mathscr{F}]$ and $[\mathscr{F}^{\oplus n}]$. We are left to show that it is an isomorphism over an open subset of a neighbourhood of $[\mathscr{F}^{\oplus n}]\in M_{n\vv}(X,h)$. By Proposition \ref{prop:generic-vanishing}, every neighbourhood of $[\mathscr{F}^{\oplus n}]\in M_{n \vv}(X,h)$ contains a point representing a polystable sheaf of the form 
        \begin{equation*} 
            \mathscr{G}\coloneqq \bigoplus_{i=1}^n \mathscr{F}_i,
        \end{equation*}
    with $[\mathscr{F}_i]\in M^s_{\vv}(X,h)$ pairwise non-isomorphic stable sheaves with $\Ext^1(\mathscr{F}_i, \mathscr{F}_j)=0$ for $i\neq j$. We can also assume $[\mathscr{F}_i]\in M^s_{\vv}(X,h)$ are smooth points. We aim to show that \eqref{eq:direct-sum-thm-split} is an isomorphism around $[\mathscr{G}]\in M_{n\vv}(X,h)$. 

    We refer to the local description of $M_{n\vv}(X,h)$ around the point $[\mathscr{G}]$ outlined in Section \ref{section:local-structure-pi}.  As in Remark \ref{rmk:compatibility-direct sum}, the direct-sum morphism
        \begin{equation}\label{eq:direct-sum-def}
            \prod_{i=1}^n \mathrm{Def}_{\mathscr{F}_i}\to \mathrm{Def}_\mathscr{G},
        \end{equation}
   is the formal germ of \eqref{eq:direct-sum-thm-split} after taking invariants by $G\coloneqq \mathrm{Aut}(\mathscr{G})=\mathbb{G}_m^n$. Its
    tangent map is given by the block-diagonal inclusion
        \begin{equation}\label{eq:block-diagonal-inclusion-off-diagonal}
            \bigoplus_{i=1}^n \Ext^1(\mathscr{F}_i, \mathscr{F}_i)\to \Ext^1(\mathscr{G}, \mathscr{G}).
        \end{equation}
    Since by assumption $\Ext^1(\mathscr{F}_i, \mathscr{F}_j)=0$ for $i\neq j$, this is an isomorphism, so that \eqref{eq:direct-sum-def} is also an isomorphism and $\mathrm{Def}_{\mathscr{G}}$ is smooth. Since the $G$-action is trivial, we deduce that \eqref{eq:direct-sum-thm-split} is also an isomorphism in a neighbourhood of $[\mathscr{G}]\in M_{n\vv}(X,h)$. 
\end{proof}

As a concrete application of the philosophy expressed in Remark~\ref{rmk:quiver-varieties}, by means of the standard polystability criterion for representations of finite dimensional algebras, we can prove a partial converse to Theorem \ref{thm:split}, producing stable sheaves arbitrarily close to $[\mathscr{F}^{\oplus n}]\in M_{n\vv}(X,h)$. However, we cannot ensure that these are contained in the component singled out above, unless we are given the extra input that $M_{n\vv}(X,h)$ is unibranch at $[\mathscr{F}^{\oplus n}]$. 

\begin{theorem}[Thm.\@ \ref{thm:intro:split}.(ii)] 
    Let $\mathscr{F}$ be a stable sheaf on a smooth polarised variety $(X,h)$, with Mukai vector $\vv\in H^\ast(X, \mathbb{Q})$, satisfying \textnormal{Assumption \ref{ass:main-assumption}}. If $\mathscr{F}$ is not semi-rigid, then for all $n\geq 2$ any neighbourhood of $[\mathscr{F}^{\oplus n}]$ in $M_{n\vv}(X,h)$ contains a stable sheaf.  
\end{theorem}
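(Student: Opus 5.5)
By Theorem \ref{thm:semirigid}, the failure of semi-rigidity gives a non-zero decomposable element $e_1\wedge e_2\in\ker(\Upsilon_{\mathscr{F}})$. The plan is to use it to build, for each $n\ge 2$, a point $\alpha\in\mu^{-1}(0)\subseteq V\otimes\mathfrak{gl}_n$ whose $\mathrm{GL}_n$-orbit is closed with stabiliser exactly the scalars $\mathbb{G}_m$. By Remark \ref{rmk:local-structure-stable-locus} and Lemma \ref{lem:local-structure-moduli}, such a point corresponds to a stable sheaf in the \'etale neighbourhood $\Spec B$ of $[\mathscr{F}^{\oplus n}]$. To make it land in an arbitrarily small neighbourhood, we use that $\mu^{-1}(0)$ is a cone and that the quotient map $\mu^{-1}(0)\to\mu^{-1}(0)/\!/G$ is $\mathbb{G}_m$-equivariant for the scaling action. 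The scalings $t\alpha$, $t\to 0$, are again stable points, and their images tend to $0$. We then transport these along the \'etale maps of Lemma \ref{lem:local-structure-moduli}: the stable locus is open, and stability is detected by stabilisers, which are preserved by \'etale maps of the Luna type.

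\textbf{Construction.} Write $V=\Ext^1(\mathscr{F},\mathscr{F})$ and let $\alpha=e_1\otimes A+e_2\otimes B$ with $A,B\in\mathfrak{gl}_n$. By \eqref{eq:definition-mu}, $\mu(\alpha)=(e_1\cup e_2)\otimes[A,B]=0$ for any choice of $A,B$, since $\Upsilon_{\mathscr{F}}(e_1\wedge e_2)=0$. So every pair of matrices gives a point of $\mu^{-1}(0)$, and under Remark \ref{rmk:quiver-varieties} the subspace $\langle e_1,e_2\rangle\otimes\mathfrak{gl}_n$ is a copy of the representation space of the free algebra $\mathbb{C}\langle x,y\rangle$ in dimension $n$. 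We choose $(A,B)$ generating the full matrix algebra $M_n(\mathbb{C})$, for instance $A=\mathrm{diag}(\lambda_1,\dots,\lambda_n)$ with distinct $\lambda_i$ and $B$ the cyclic permutation matrix. Then the $n$-dimensional representation $\mathbb{C}^n$ of $\mathbb{C}\langle x,y\rangle$ is simple by Burnside's theorem.

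\textbf{Stability.} We first note that $\mathrm{GL}_n$-invariant subspaces of $\mathbb{C}^n$ for the full tuple $\alpha$ are the same as those for the subalgebra generated by $A$ and $B$. So $\alpha$, viewed as a representation of the bound quiver $(Q,I)$ with one vertex, is simple. The standard criterion from King \cite{King1994ModuliRep} and Artin \cite{ArtinAzumaya1969} then gives that the orbit $G\cdot\alpha$ is closed, since simple implies semisimple and hence polystable. It also gives that the stabiliser is $\mathrm{Aut}=\mathbb{C}^\ast$ by Schur's lemma.

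\textbf{Expected difficulty.} The main subtlety is the translation back to sheaves, i.e.\ checking that "closed orbit with stabiliser $\mathbb{G}_m$" in $\mu^{-1}(0)$ really corresponds to a stable point of $\mathscr{M}_{n\vv}(X,h)$ near $\mathscr{F}^{\oplus n}$. First, the scaled points $t\alpha$ must lie in the image of $\Spec A\to\mu^{-1}(0)$. This holds because the \'etale map is a neighbourhood of $0$ and $t\alpha\to 0$, though we should argue on $\Spec B$ using the Cartesian squares. Second, the \'etale maps in Lemma \ref{lem:local-structure-moduli} must be stabiliser-preserving and send closed points of the quotient stack with closed orbits to polystable points. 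Both properties come from the Luna \'etale slice construction, in the strongly \'etale form of \cite{AlperHallRydhLunaEtale}. Given these, Remark \ref{rmk:local-structure-stable-locus} identifies the corresponding polystable sheaf as stable.
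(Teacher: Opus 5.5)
Your proposal is correct and follows the paper's proof essentially step by step. You take a non-zero $e_1\wedge e_2\in\ker(\Upsilon_{\mathscr{F}})$ and set $\alpha=e_1\otimes A+e_2\otimes B\in\mu^{-1}(0)$ with $(A,B)$ acting irreducibly on $\mathbb{C}^n$. The Artin polystability criterion and Schur's lemma then give a closed orbit with stabiliser $\mathbb{G}_m$, and you rescale towards $0$ before transporting via Lemma \ref{lem:local-structure-moduli} and Remark \ref{rmk:local-structure-stable-locus}.

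The only difference is the choice of irreducible pair. The paper takes it from a principal $\mathfrak{sl}_2$-triple via Jacobson--Morozov. Your diagonal-plus-cyclic-permutation pair generates $M_n(\mathbb{C})$ directly, as one sees from the matrix units, so Burnside's theorem is not actually needed.
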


\begin{proof}
    The proof is a tightening of the construction in the proof of Theorem \ref{thm:semirigid}. We start with a non-zero $e_1\wedge e_2\in \ker(\Upsilon_{\mathscr{F}})$ and want to produce an element $\alpha\in \mu^{-1}(0)$ which is stable for the rigidified action of $\mathrm{PGL}_n$ on $\mu^{-1}(0)$. Since we can rescale $\alpha$ by keeping it stable, in order to make it arbitrarily close to $0\in \mu^{-1}(0)$, this allows us to produce a stable sheaf in $M_{n\vv}(X,h)$ around $[\mathscr{F}^{\oplus n}]$ by Lemma \ref{lem:local-structure-moduli} and Remark \ref{rmk:local-structure-stable-locus}. 
    
    As in the proof of Theorem \ref{thm:semirigid}, we start with an $\mathfrak{sl}_2$-triple $\{X,Y,H\}\subseteq \mathfrak{gl}_n$ whose associated representation is irreducible. This can always be arranged by picking a regular nilpotent $X\in \mathfrak{gl}_n$ and extending it to an $\mathfrak{sl}_2$-triple by the Jacobson--Morozov theorem; since the Jordan type of $X$ is a single block, the resulting $\mathfrak{sl}_2$-module is irreducible, and any two such triples are $\mathrm{GL}_n$-conjugate \cite[\textsection 3.7]{ChrissGinzburgRepTheory2010}. Setting 
        \begin{equation*}
            \alpha\coloneqq e_1\otimes X+e_2\otimes Y\in V\otimes \mathfrak{gl}_n,
        \end{equation*}
    we have that $\alpha$ has closed $\mathrm{GL}_n$-orbit in $\mu^{-1}(0)$ by Weyl's theorem on complete reducibility and the standard polystability criterion for representations of finite dimensional algebras in terms of semisimplicity \cite{ArtinAzumaya1969}. Finally, Schur's lemma implies that the stabiliser of $\alpha$ is $\mathbb{G}_m$, since the associated representation is irreducible. 
\end{proof}

The following is an elementary numerical criterion to ensure the existence of stable sheaves in $M_{n\vv}(X,h)$ around $[\mathscr{F}^{\oplus n}]$:

\begin{corollary}\label{cor:numerical-criterion-non-split} 
    Under \textnormal{Assumption \ref{ass:main-assumption}}, if 
        \begin{equation}\label{eq:numerical-condition-on-ker-Yoneda}
            \dim \ker (\Upsilon_{\mathscr{F}}) \geq \binom{\dim \Ext^1(\mathscr{F}, \mathscr{F})-2}{2}+1,
        \end{equation}
    then $\ker (\Upsilon_{\mathscr{F}})$ contains a non-zero decomposable element, so $\mathscr{F}$ is not semi-rigid. 
\end{corollary}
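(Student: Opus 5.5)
The argument is a dimension count in $\mathbb{P}(\bwed^2 V)$, where $V\coloneqq \Ext^1(\mathscr{F},\mathscr{F})$ has dimension $d$. Non-zero decomposable elements of $\bwed^2 V$, up to scaling, form the Plücker embedding of the Grassmannian $\Gr(2,V)\subseteq \mathbb{P}(\bwed^2 V)$. This is a closed projective subvariety of dimension $2(d-2)$. The kernel $K\coloneqq\ker(\Upsilon_{\mathscr{F}})$ gives a linear subspace $\mathbb{P}(K)$ of dimension $\dim K-1$. By the projective dimension theorem, two closed subvarieties of $\mathbb{P}^N$ whose dimensions sum to at least $N$ must intersect. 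Here $N=\binom{d}{2}-1$.

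So it is enough to check that
\begin{equation*}
    2(d-2)+\dim K-1\geq \binom{d}{2}-1 .
\end{equation*}
The hypothesis gives $\dim K\geq \binom{d-2}{2}+1$. Using the identity $\binom{d}{2}=\binom{d-2}{2}+2(d-2)+1$, the left-hand side is at least $2(d-2)+\binom{d-2}{2}=\binom{d}{2}-1$, as required.

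A point of $\mathbb{P}(K)\cap \Gr(2,V)$ is a non-zero decomposable element of $\ker(\Upsilon_{\mathscr{F}})$. Theorem \ref{thm:semirigid} then shows that $\mathscr{F}$ is not semi-rigid.

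The only care needed is with degenerate cases. If $d\leq 1$, then $\bwed^2 V=0$ and the hypothesis forces $\dim K\geq 1$, which is impossible, so the statement is vacuous. If $d=2$ or $d=3$, every element of $\bwed^2 V$ is decomposable, so any non-zero element of $K$ works; for $d=3$ the hypothesis reads $\dim K\geq 1$. The general intersection argument also covers these cases. There is no real obstacle beyond the binomial identity.
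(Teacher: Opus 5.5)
Your proposal is correct and takes the same approach as the paper. The paper's proof only asserts that the hypothesis forces $\mathbb{P}\ker(\Upsilon_{\mathscr{F}})\cap \Gr(2,V)\neq\emptyset$ in $\mathbb{P}(\bwed^2 V)$. Your argument supplies the omitted dimension count: by the identity $\binom{d}{2}=\binom{d-2}{2}+2(d-2)+1$, the hypothesis is exactly the condition $\dim\mathbb{P}(K)+\dim\Gr(2,V)\geq\dim\mathbb{P}(\bwed^2 V)$ required by the projective dimension theorem.
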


\begin{proof} 
    Using the above notation, the numerical assumption \eqref{eq:numerical-condition-on-ker-Yoneda} ensures that $\mathbb{P}\ker(\Upsilon_{\mathscr{F}}) \cap \operatorname{Gr}(2,V)$ is not empty in $\mathbb{P}(\bwed^2 V)$. In other words, we can find $e_1, e_2\in V$ such that $0\neq e_1\wedge e_2\in \ker(\Upsilon_{\mathscr{F}})$.
\end{proof}

We finish this section with a strengthening of Theorem \ref{thm:split}, under the additional assumption that $\Upsilon_{\mathscr{F}}$ is injective:

\begin{proposition}\label{prop:ker-Upsilon-zero}
    Let $\mathscr{F}$ be a stable sheaf on a smooth polarised variety $(X,h)$, with Mukai vector $\mathbf{v}\in H^\ast(X, \mathbb{Q})$, satisfying \textnormal{Assumption \ref{ass:main-assumption}}. If $\ker(\Upsilon_{\mathscr{F}})=0$, then for all $n\geq 2$ the direct-sum morphism
        \begin{equation}\label{eq:direct-sum-upsilon-inj}
            \Sym^n M_{\vv}(X,h)\to M_{n\vv}(X,h)
        \end{equation}
    is an isomorphism in a neighbourhood of $[\mathscr{F}^{\oplus n}]\in M_{n\vv}(X,h)$. In particular, $M_{n\vv}(X,h)$ is reduced and normal at $[\mathscr{F}^{\oplus n}]$. 
\end{proposition}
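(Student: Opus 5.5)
If $\ker(\Upsilon_{\mathscr{F}})=0$, then by the factorisation \eqref{eq:crucial-factorisation} the map $\Upsilon_{\mathscr{F}}\otimes\mathrm{id}_{\mathfrak{g}}$ is injective. Hence $\mu$ and $\chi$ have the same zero fibre \emph{scheme-theoretically}, not only set-theoretically. Indeed, the ideal of $\mu^{-1}(0)$ is generated by the coordinates of $\mu=(\Upsilon_{\mathscr{F}}\otimes\mathrm{id})\circ\chi$. Choosing a linear left inverse $r$ of $\Upsilon_{\mathscr{F}}\otimes\mathrm{id}$, we get $\chi=r\circ\mu$. Thus each coordinate of $\chi$ is a linear combination of the coordinates of $\mu$, and conversely, so the two ideals coincide. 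This gives an equality of $G$-schemes $\mathfrak{C}(\mathfrak{g},V)=\mu^{-1}(0)$ inside $V\otimes\mathfrak{g}$, for every $n\geq 2$.

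Passing to GIT quotients, the closed embedding $\mathfrak{C}(\mathfrak{g},V)/\!/G\to\mu^{-1}(0)/\!/G$ from the proof of Lemma \ref{lem:local-structure-direct-sum} is then the identity. Composing with Theorem \ref{thm:Vaccarino-Chevalley}, the local model of \eqref{eq:direct-sum-upsilon-inj} at $n[\mathscr{F}]$ is the isomorphism $V\otimes\mathfrak{t}/\!/W\cong\Sym^n V\to\mu^{-1}(0)/\!/G$.

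The remaining step transports this back to the moduli spaces, and this is where I expect the only real care is needed. Lemma \ref{lem:local-structure-direct-sum} provides a commutative square of étale neighbourhoods $(\Spec A,x)\to(\Spec B,y)$ whose left vertical arrow is an isomorphism. By the lemma, the morphism $\Sym^n M_\vv(X,h)\to M_{n\vv}(X,h)$ is already a closed embedding near $n[\mathscr{F}]$, say with ideal sheaf $\mathcal{I}$. I would check that $\mathcal{I}$ vanishes after completion at $[\mathscr{F}^{\oplus n}]$. Via the étale maps and Remark \ref{rmk:compatibility-direct sum}, the map on completed local rings is identified with the completion of $\mathbb{C}[\mu^{-1}(0)]^G\to\mathbb{C}[V\otimes\mathfrak{t}]^W$ at $0$, which is an isomorphism. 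Therefore $\mathcal{I}_{[\mathscr{F}^{\oplus n}]}\otimes\hat{\mathscr{O}}=0$, and by faithful flatness of completion $\mathcal{I}$ vanishes at the stalk. Since $\mathcal{I}$ is coherent, it vanishes on a neighbourhood. Hence the closed embedding is an isomorphism near $[\mathscr{F}^{\oplus n}]$.

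Finally, $\Sym^n M_\vv(X,h)$ is, near $n[\mathscr{F}]$, étale-locally $\Sym^n$ of a smooth germ, because $\mathrm{Def}_{\mathscr{F}}$ is smooth by Assumption \ref{ass:main-assumption}. It is therefore reduced and normal, being a quotient of a smooth variety by a finite group (also directly from Theorem \ref{thm:Vaccarino-Chevalley}). Reducedness and normality then transfer to $M_{n\vv}(X,h)$ at $[\mathscr{F}^{\oplus n}]$ through the isomorphism.
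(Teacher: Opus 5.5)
Your proposal is correct and follows the paper's argument: injectivity of $\Upsilon_{\mathscr{F}}$ forces $\mathfrak{C}(\mathfrak{g},V)=\mu^{-1}(0)$ via the factorisation \eqref{eq:crucial-factorisation}, and Lemma \ref{lem:local-structure-direct-sum} then makes the direct-sum morphism a closed embedding that is \'etale at $n[\mathscr{F}]$, hence a local isomorphism. Your left-inverse argument for the scheme-theoretic equality, and your completion and faithful-flatness argument for the vanishing of the ideal sheaf, spell out details that the paper leaves implicit.
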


\begin{proof}
    With the notation of Section \ref{section:local-structure-direct sum}, if $\Upsilon_{\mathscr{F}}$ is injective, we see immediately from diagram \eqref{eq:crucial-factorisation} that the $G$-equivariant closed embedding $\mathfrak{C}(\mathfrak{g}, V)\subseteq \mu^{-1}(0)$ is an equality. Lemma \ref{lem:local-structure-direct-sum} then implies that \eqref{eq:direct-sum-upsilon-inj} is a closed embedding \emph{and} \'etale  at $n[\mathscr{F}]\in \Sym^n M_{\vv}(X,h)$. 
\end{proof}

\begin{example}[Symplectic surfaces]\label{ex:surfaces-case} 
    As pointed out in Remark \ref{rmk:on-main-ass}, if $(X,h)$ is a symplectic surface then any stable sheaf $\mathscr{F}$ satisfies Assumption \ref{ass:main-assumption}. By Serre duality, $\Ext^2(\mathscr{F}, \mathscr{F})$ is one-dimensional, hence the linear map 
        \begin{equation*}
            \Upsilon_{\mathscr{F}}\colon \bwed^2 \Ext^1(\mathscr{F}, \mathscr{F})\to \Ext^2(\mathscr{F}, \mathscr{F})
        \end{equation*}
    is injective if and only if $\dim \Ext^1(\mathscr{F}, \mathscr{F})=2$, i.e.\@ $\vv(\mathscr{F})^2=0$. Assuming $\vv\in H^\ast(X, \mathbb{Z})$ is primitive and $h\in \mathrm{Amp}(X)$ is generic, Proposition \ref{prop:ker-Upsilon-zero} gives a different proof of Mukai's result that if $\vv^2=0$, then $\Sym^n M_{\vv}(X,h)\to M_{n\vv}(X,h)$ is an isomorphism for all $n\geq 2$ \cite[Prop.\@ 3.13]{Mukai1987ModuliBundlesK3}. 

    The cases $\dim \Ext^1(\mathscr{F}, \mathscr{F})>2$ all fall into the range covered by Corollary \ref{cor:numerical-criterion-non-split}, as $\ker(\Upsilon_{\mathscr{F}})$ is a hyperplane in $\bwed^2 \Ext^1(\mathscr{F}, \mathscr{F})$. Of course, in these cases we know the stable locus $M^s_{n\vv}(X,h)$ is nonempty by \cite{Yoshioka2001ModuliAbelian, KaledinLehnSorger2006Singular}, hence \eqref{eq:direct-sum-morphism2} cannot be surjective. In fact, it is an irreducible component of the singular locus of $M_{n\vv}(X,h)$ \cite{KaledinLehnSorger2006Singular}.
\end{example}


\section{Examples and applications}\label{section:examples}
In this section, we review several concrete examples of semi-rigid stable sheaves. After briefly discussing the simplest cases (point-like objects) we turn to the more interesting question of determining when a line bundle is semi-rigid on a smooth projective variety.
We show that this condition is related to the variety being \emph{Albanese primitive}, a topological property studied by Catanese over thirty years ago \cite{Catanese1991}.
Building on this geometric characterisation, we then study the analogous question for line bundles supported on Lagrangian subvarieties of a hyper-K\"ahler manifold.
We focus on the case of the variety of lines on a cubic fourfold, proving Theorem \ref{intro:thm-Fano-surface} from the introduction. 
We note that while rigid sheaves are trivially semi-rigid, we focus on the non-rigid case, where the deformation theory interacts non-trivially with the direct-sum morphisms.

\subsection*{Point-like objects}
The prototypical examples of semi-rigid sheaves are given by skyscraper sheaves on a smooth projective variety $X$. For any $x\in X$, we have that $\mathscr{O}_x$ is stable with respect to any polarisation $h\in \mathrm{Amp}(X)$ and satisfies Assumption \ref{ass:main-assumption}. Moreover, we have an isomorphism of graded algebras
    \begin{equation*}
        \Ext^\ast(\mathscr{O}_x, \mathscr{O}_x)\cong \bwed^\ast T_x X. 
    \end{equation*}
In fact, if $\mathsf{pt} \in H^\ast (X,\mathbb{Q})$ denotes the class of a point, one clearly has an isomorphism $X\cong M_{\mathsf{pt}}(X,h)$ induced by $\mathscr{O}_{\Delta}$ on $X\times X$, and accordingly for any $n\geq 2$ we get isomorphisms
    \begin{equation*}
        \Sym^nX\iso M_{n\mathsf{pt}}(X,h), \quad \sum_{i=1}^n x_i\mapsto [\mathscr{O}_{x_1}\oplus \cdots \oplus \mathscr{O}_{x_n}]. 
    \end{equation*}

After \cite[Prop.\@ 1.4]{BudurZhang2019Formality}, all \emph{stable sheaves} which are images of skyscraper sheaves under derived equivalences also satisfy Assumption \ref{ass:main-assumption} and are semi-rigid. These include for instance:
\begin{enumerate}[label={\upshape(\roman*)}] 
    \item Line bundles on an abelian variety $A$: the Poincaré bundle induces a derived equivalence $\Db(A)\iso \Db(\Pic^0(A))$.
    \item Topologically trivial line bundles on smooth fibres of a Beauville--Mukai system with a section $M$.
    The relative Poincaré sheaf, constructed by Arinkin \cite{Arinkin2013AutodualCompactified}, induces an autoequivalence of $\Db(M)$ sending the skyscraper sheaf of a point on a smooth fibre to such a line bundle. 
    \item Stable sheaves $\mathscr{F}$ on a symplectic surface $(X,h)$ with $\vv(\mathscr{F})$ primitive, $h\in \mathrm{Amp}(X)$ generic, and $\vv(\mathscr{F})^2=0$: any twisted universal family induces a derived equivalence $\Db(X)\cong \Db(M_{\vv}(X,h), \theta)$, for a suitable $\theta\in \Br(M_{\vv}(X,h))$ \cite{Mukai1999DualityK3}. 
    \item Markman's atomic vector bundles on $\mathrm{K3}^{[n]}$-type hyper-K\"ahler manifolds constructed in \cite[Theorem 1.6]{Markman2024StableRank1}.  
\end{enumerate}

\begin{remark}
    In fact, all topologically trivial line bundles on a Lagrangian torus $A \subseteq X$ in a hyper-K\"ahler manifold $X$ are semi-rigid, not just the ones in $\textnormal{(ii)}$. 
    Indeed, formality of $i_\ast \mathscr{L}$ holds by \cite{Mladenov2024Formality}, and there is an isomorphism of graded algebras 
    \[
        \Ext^\ast(i_\ast \mathscr{L},i_\ast \mathscr{L}) \cong H^\ast(A,\mathbb{C}) \cong \bwed^\ast H^1(A,\mathbb{C}),
    \]
    by \cite[Corollary 3.4.4]{Mladenov2024Formality}.
\end{remark}

\subsection*{Semi-rigid line bundles}
In the case of line bundles on a smooth projective variety $X$, the semi-rigidity criterion of Theorem \ref{thm:semirigid} admits an especially concrete geometric interpretation. 

For a line bundle $\mathscr{L}\in \mathrm{Pic}(X)$, we have that $\mathscr{L}$ is stable for any polarisation $h\in \mathrm{Amp}(X)$. Moreover, the differential graded associative algebra $\RHom^\ast(\mathscr{L}, \mathscr{L})\cong \mathrm{R}\Gamma^\ast(\mathscr{O}_X)$ controlling deformations of $\mathscr{L}$ is formal \cite{NeisendorferTaylor1978DolbeaultHomotopyTheory}, and hence isomorphic to the $(0,\ast)$-Dolbeault cohomology algebra $H^{0,\ast}(X)=H^\ast(X, \mathscr{O}_X)$. Moreover, the deformation space $\mathrm{Def}_\mathscr{L}$ is smooth, so line bundles fall within the scope of Assumption \ref{ass:main-assumption}.

The classical Castelnuovo--de Franchis theorem shows that decomposable elements in $\ker(\Upsilon_{\mathscr{L}})$ correspond to irrational pencils on $X$. Moreover, by a result of Catanese \cite{Catanese1991}, this is in fact a topological property of $X$, detected by the cup product structure on the Betti cohomology algebra $H^\ast(X, \mathbb{C})$:

\begin{proposition}[Prop.\@ \ref{intro:prop-CdF}]\label{prop-CdF}
    Let $X$ be a smooth projective variety. The following are equivalent:
        \begin{enumerate}[label={\upshape(\roman*)}] 
            \item Line bundles on $X$ are semi-rigid.
            \item For any linearly independent $\eta_1, \eta_2\in H^1(X, \mathbb{C})$, we have $\eta_1\cup \eta_2\neq 0\in H^2(X, \mathbb{C})$.
            \item Every morphism $X\to C$ to a curve $C$ of genus $g(C)\geq 2$ is constant.  
        \end{enumerate}
\end{proposition}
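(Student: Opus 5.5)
By Theorem \ref{thm:semirigid}, statement (i) holds exactly when $\ker(\Upsilon_{\mathscr{L}})$ has no non-zero decomposable element. Here $\Upsilon_{\mathscr{L}}$ is the cup product $\bwed^2 H^1(X,\mathscr{O}_X)\to H^2(X,\mathscr{O}_X)$. This uses the identification $\RHom^\ast(\mathscr{L},\mathscr{L})\simeq H^{0,\ast}(X)$ as formal algebras recalled above. Note that $\Upsilon_{\mathscr{L}}$ does not depend on $\mathscr{L}$. So (i) is equivalent to:

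(i') for linearly independent $\bar\omega_1,\bar\omega_2\in H^{0,1}(X)$, we have $\bar\omega_1\cup\bar\omega_2\neq 0$ in $H^{0,2}(X)$.

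By complex conjugation, (i') is equivalent to the same statement for holomorphic $1$-forms $\omega_1,\omega_2\in H^0(X,\Omega^1_X)$ with $\omega_1\wedge\omega_2\neq 0$ in $H^0(X,\Omega^2_X)$. Holomorphic forms on a compact Kähler manifold are harmonic, so the wedge vanishes as a class if and only if it vanishes as a form. The plan is then to prove (iii)$\Leftrightarrow$(i') via Castelnuovo--de Franchis, and (ii)$\Leftrightarrow$(iii) via Catanese's topological refinement \cite[Thm.\@ 1.10]{Catanese1991}.

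For (i')$\Rightarrow$(iii), argue by contraposition. Suppose $f\colon X\to C$ is non-constant with $g(C)\geq 2$; after Stein factorisation we may assume it is surjective. Take independent $\alpha_1,\alpha_2\in H^0(C,\omega_C)$. Their pullbacks $f^\ast\alpha_i$ are independent, since $f^\ast$ is injective on $H^0$ for a dominant map. Their wedge vanishes because $\Omega^2_C=0$. For (iii)$\Rightarrow$(i'), again argue by contraposition. Independent $\omega_1,\omega_2$ with $\omega_1\wedge\omega_2=0$ give, by the Castelnuovo--de Franchis theorem (valid on compact Kähler manifolds of any dimension), a non-constant map $X\to C$ to a curve of genus $\geq 2$ from which both forms are pulled back.

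For (ii)$\Leftrightarrow$(iii), invoke Catanese's theorem \cite[Thm.\@ 1.10]{Catanese1991}. It states that isotropic subspaces $W\subseteq H^1(X,\mathbb{C})$ of dimension $\geq 2$ that are maximal for the cup product correspond to irrational pencils $f\colon X\to C$ with $g(C)\geq 2$. Under this correspondence $W=f^\ast H^1(C,\mathbb{C})$. One direction is elementary: a pencil gives $f^\ast H^1(C,\mathbb{C})$, which has dimension $2g(C)\geq 4$ and vanishing cup products, so (ii) fails. Conversely, if $\eta_1\cup\eta_2=0$ with $\eta_1,\eta_2$ independent, then $\langle\eta_1,\eta_2\rangle$ is a $2$-dimensional isotropic subspace. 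By Catanese it is contained in some $f^\ast H^1(C,\mathbb{C})$, which produces a pencil.

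The main obstacle is this last step. One must check that Catanese's statement applies to any $2$-dimensional isotropic subspace with complex rather than rational or real coefficients, including subspaces not of Hodge type. This is the content of his theorem, which handles complex coefficients and deduces the holomorphic forms from the Hodge decomposition of $\eta_1,\eta_2$. If that reference needs supplementing, I would split $\eta_i=\omega_i+\bar\theta_i$ into its $(1,0)$ and $(0,1)$ parts. I would then use the $(2,0)$ component of $\eta_1\cup\eta_2=0$ to get $\omega_1\wedge\omega_2=0$, reducing to Castelnuovo--de Franchis when the $\omega_i$ are independent. The degenerate cases, where the $(1,0)$ parts are dependent, require the more delicate argument that is precisely Catanese's contribution.
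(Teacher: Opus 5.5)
Your chain of reductions is the same as the paper's. Theorem~\ref{thm:semirigid} turns (i) into the absence of non-zero decomposables in $\ker(\Upsilon_{\mathscr{L}})$ for $\bwed^2 H^1(X,\mathscr{O}_X)\to H^2(X,\mathscr{O}_X)$. Complex conjugation moves this to the wedge product $\bwed^2 H^0(X,\Omega^1_X)\to H^0(X,\Omega^2_X)$, and Castelnuovo--de Franchis identifies it with (iii). Finally, (ii)$\Leftrightarrow$(iii) is quoted from \cite[Thm.\@ 1.10]{Catanese1991}. Your two arguments for (i$'$)$\Leftrightarrow$(iii) are fine. The Stein factorisation is superfluous, since a non-constant morphism from an irreducible projective variety to a curve is already surjective.

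There is, however, a false assertion in your paragraph on (ii)$\Leftrightarrow$(iii): $f^\ast H^1(C,\mathbb{C})$ does \emph{not} have vanishing cup products. On $C$, the cup product $H^1(C,\mathbb{C})\times H^1(C,\mathbb{C})\to H^2(C,\mathbb{C})\cong\mathbb{C}$ is the non-degenerate intersection form. For surjective $f$, the pullback $f^\ast$ is injective on $H^2$: the point class pulls back to the class of a fibre, which has positive degree against $h^{\dim X-1}$ for $h$ ample. Hence $f^\ast\alpha\cup f^\ast\beta=f^\ast(\alpha\cup\beta)\neq 0$ whenever $\alpha\cup\beta\neq 0$.

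For the same reason your paraphrase of Catanese's theorem is off. Since $f^\ast$ is also injective on $H^1$, an isotropic subspace of $f^\ast H^1(C,\mathbb{C})$ is $f^\ast L$ with $L$ isotropic for the intersection form. It therefore has dimension at most $g(C)$ (for example $f^\ast H^{1,0}(C)$), and is never all of $f^\ast H^1(C,\mathbb{C})$, which has dimension $2g(C)$.

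The repair is already in your text. The classes of $f^\ast\alpha_1,f^\ast\alpha_2$ from your proof of (i$'$)$\Rightarrow$(iii) are linearly independent in $H^{1,0}(X)\subseteq H^1(X,\mathbb{C})$. Their cup product is $[f^\ast(\alpha_1\wedge\alpha_2)]=0$, so (ii) fails. For the converse, extend $\langle\eta_1,\eta_2\rangle$ to a maximal isotropic subspace and apply Catanese; this yields a pencil with $\eta_1,\eta_2\in f^\ast H^1(C,\mathbb{C})$. That is the one genuinely cited input, exactly as in the paper's proof.
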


\begin{proof} Let $\mathscr{L}$ be a line bundle on $X$. By Theorem \ref{thm:semirigid}, $\mathrm{(i)}$ corresponds to no decomposable elements in the kernel of the Yoneda product
    \begin{equation}\label{eq:Upsilon-line-bundles}
        \Upsilon_{\mathscr{L}} \colon \bwed^2 H^1(X, \mathscr{O}_X)\to H^2(X, \mathscr{O}_X). 
    \end{equation}
Via complex conjugation, this turns into the non-existence of decomposable elements in the kernel of the wedge product of differentials
    \begin{equation*}
        \overline{\Upsilon}_{\mathscr{L}}\colon \bwed^2 H^0(X, \Omega^1_X)\to H^0(X, \Omega^2_X),
    \end{equation*}
which is equivalent to $\mathrm{(iii)}$ by Castelnuovo--de Franchis. The equivalence with the topological condition $\mathrm{(ii)}$ is proved in \cite[Thm.\@ 1.10]{Catanese1991}.
\end{proof}

\begin{example}[Albanese primitive varieties]
    Following \cite[Def.\@ 1.20]{Catanese1991}, one says a smooth projective variety $X$ is \emph{Albanese primitive} if it admits no \emph{higher irrational pencils}. These are fibrations $X\to Y$ where $Y$ is a normal variety of Albanese general type, and are detected by higher-dimensional isotropic subspaces for the cup product on $H^1(X, \mathbb{C})$ \cite[Thm.\@ 2.25]{Catanese1991}. In particular, Albanese primitive varieties satisfy the equivalent conditions of Proposition \ref{prop-CdF}. Typical sources of examples include:
        \begin{enumerate}[label={\upshape(\roman*)}] 
            \item Regular varieties with $q(X)=0$. In this case line bundles are in fact \emph{rigid}, in the sense that $\Ext^1(\mathscr{L}, \mathscr{L})=0$. 
            \item Abelian varieties and related constructions: smooth ample complete intersections of dimension $\geq 2$ in abelian varieties, fibrations over an abelian variety with regular generic fibre \cite[Thm.\@ 5.13]{Fujino2005RemarksAlgebraicFiberSpaces} (for instance, high symmetric products of curves or moduli spaces of stable vector bundles on curves).
            \item Many examples among irregular surfaces of general type \cite{LopesPardini2012GeographySurfaces}.
        \end{enumerate}
\end{example}

\subsection*{Semi-rigid Lagrangian line bundles} 

The geometric characterisation above can almost effortlessly be transferred to the setting of Lagrangian line bundles on hyper-K\"ahler manifolds, provided one works under a few favourable hypotheses. 

Let $i\colon Z\hookrightarrow X$ be the embedding of a smooth Lagrangian submanifold into a hyper-K\"ahler manifold $X$, and let $\mathscr{L}\in \Pic(Z)$. The sheaf $i_\ast  \mathscr{L}$ is a pure sheaf supported on $Z$, hence stable with respect to any polarisation. Moreover, its deformation space $\mathrm{Def}_{i_\ast  \mathscr{L}}$ is smooth and symplectic \cite[Thm.\@ 8.1]{DonagiMarkman1996Spectral}. In fact, relative compactified Jacobians of families of Lagrangian subvarieties---often providing birational models for moduli spaces of Lagrangian line bundles---are a promising source of irreducible symplectic varieties \cite{DonagiMarkman1996Spectral, Bottini2024OG10Stable, sacca2024compactifying, liu2025irreducible}.  

To align this situation with Assumption \ref{ass:main-assumption}, we assume that the differential graded algebra $\RHom^\ast(i_\ast  \mathscr{L}, i_\ast  \mathscr{L})$ is formal.
This is not as straightforward as it was for line bundles, but it holds in favourable cases.
For instance, this happens if there exists a derived equivalence $ \Db(X) \iso \Db(X',\theta)$ sending $i_\ast  \mathscr{L}$ to a twisted hyperholomorphic bundle \cite{MeazziniOnorati2025}; further examples are discussed in \cite{Mladenov2024Formality}.  

Moreover, since we want to still be able to use the geometric criterion in Proposition \ref{prop-CdF} in this context, we further assume that the local-to-global spectral sequence
    \begin{equation}\label{eq:local-to-global-spectral-sequence}
        E^{p,q}_2\coloneqq H^q(Z, \Omega^p_Z) \Rightarrow \Ext^{p+q}(i_\ast  \mathscr{L}, i_\ast  \mathscr{L})
    \end{equation}
degenerates at $E_2$. This allows us to compare the graded algebra $\Ext^\ast(i_\ast  \mathscr{L}, i_\ast \mathscr{L})$ with the Dolbeault cohomology of $Z$. 

\begin{example} \label{ex:Atiyah-class-vanishing}
    By \cite{ArinkinCaldararu2012SelfIntersectionFibration, mladenov2025degeneration}, a sufficient condition for \eqref{eq:local-to-global-spectral-sequence} to degenerate is that $\mathscr{L}$ extends to the first infinitesimal neighbourhood of $Z$ in $X$. In particular, this holds whenever the Atiyah class $\mathrm{at}_\mathscr{L}\in \Ext^1(\mathscr{L}, \mathscr{L}\otimes \Omega^1_Z)$ vanishes; for line bundles, this is equivalent to $c_1(\mathscr{L})=0\in H^1(Z, \Omega^1_Z)$, hence for $\mathscr{L}\in \Pic^0(Z)$. We refer to \cite{mladenov2025degeneration} for a more precise criterion and further sufficient conditions. 
\end{example}

Since the spectral sequence \eqref{eq:local-to-global-spectral-sequence} is compatible with the Yoneda product \cite[\textsection 1.2]{mladenov2025degeneration}, each vector space $\Ext^k(i_\ast  \mathscr{L}, i_\ast  \mathscr{L})$ carries a finite decreasing filtration $F^\ast \Ext^k(i_\ast  \mathscr{L}, i_\ast  \mathscr{L})$ whose associated graded pieces are
    \begin{equation*}
        \mathrm{gr}_F^p \Ext^{k}(i_\ast  \mathscr{L}, i_\ast  \mathscr{L})\cong \ H^{k-p}(Z,\Omega_Z^p),\quad 0\leq p\leq k.
    \end{equation*}
Multiplicativity makes these filtrations compatible with the algebra structure, so the associated graded identifies with the Dolbeault cohomology algebra, or equivalently with the Hodge-bigraded Betti cohomology algebra of $Z$:
    \begin{equation}\label{eq:Ext*-Betticohomology}
        \mathrm{gr}_F\Ext^\ast (i_\ast  \mathscr{L}, i_\ast  \mathscr{L})\cong \bigoplus_{p,q} H^q(Z, \Omega^p_Z) \cong H^\ast(Z, \mathbb{C}).
    \end{equation}

In particular, the Yoneda product in degrees $(1,2)$
    \begin{equation*}
        \Upsilon_{i_\ast  \mathscr{L}}\colon \bwed^2 \Ext^1(i_\ast  \mathscr{L}, i_\ast  \mathscr{L}) \to \Ext^2(i_\ast  \mathscr{L}, i_\ast  \mathscr{L})
    \end{equation*}
is a morphism of filtered vector spaces for the induced filtration on $\bwed^2 \Ext^1(i_\ast  \mathscr{L}, i_\ast  \mathscr{L})$. Under \eqref{eq:Ext*-Betticohomology}, its associated graded 
    \begin{equation*}
        \mathrm{gr}_F(\Upsilon_{i_\ast  \mathscr{L}})\colon \bwed^2 H^1(Z, \mathbb{C})\to H^2(Z, \mathbb{C})
    \end{equation*}
is the usual cup product on Betti cohomology, with graded pieces given by the Hodge components. For instance, the top graded piece
    \begin{equation*}
        \mathrm{gr}_F^2 (\Upsilon_{i_\ast  \mathscr{L}})\colon \bwed^2 H^1(Z, \mathscr{O}_Z)\to H^2(Z, \mathscr{O}_Z) 
    \end{equation*}
is exactly the cup product on $H^\ast(Z, \mathscr{O}_Z)$, and hence coincides with the map $\Upsilon_\mathscr{L}$ from \eqref{eq:Upsilon-line-bundles}. The next lemma records the basic comparison between $\Upsilon_{i_\ast  \mathscr{L}}$ and its associated graded.

\begin{lemma}\label{lem:Y-vs-gr(Y)}
    Under the above assumptions, the following hold:
        \begin{enumerate}[label={\upshape(\roman*)}]
            \item If the map $\mathrm{gr}_F(\Upsilon_{i_\ast  \mathscr{L}})$ is injective (respectively surjective), then $\Upsilon_{i_\ast  \mathscr{L}}$ is injective (respectively surjective). 
            \item There exists a non-zero decomposable element in $\ker(\Upsilon_{i_\ast \mathscr{L}})$ if and only if there exists a non-zero decomposable element in $\ker(\mathrm{gr}_F(\Upsilon_{i_\ast  \mathscr{L}}))$.
        \end{enumerate}
\end{lemma}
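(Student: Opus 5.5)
The plan is to treat the two parts separately. Part (i) is a general fact about finite filtrations. Part (ii) has an easy direction, by passing to principal symbols, and a harder converse, which I would reduce to the Castelnuovo--de Franchis/Catanese input already used in Proposition \ref{prop-CdF}.

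For (i), I would argue by induction on the length of the (finite, exhaustive, separated) filtrations, using that $\Upsilon_{i_\ast\mathscr{L}}$ is a filtered map. The induced filtration on $\bwed^2\Ext^1$ has $\mathrm{gr}_F\bwed^2\Ext^1\cong\bwed^2\mathrm{gr}_F\Ext^1\cong\bwed^2H^1(Z,\mathbb{C})$.
\begin{itemize}
\item Injectivity: if $0\neq w\in F^p\setminus F^{p+1}$, its symbol $\sigma_p(w)$ is non-zero. Injectivity of $\mathrm{gr}_F(\Upsilon)$ gives $\mathrm{gr}^p_F(\Upsilon)(\sigma_p(w))\neq0$, so $\Upsilon(w)\notin F^{p+1}$ and in particular $\Upsilon(w)\neq0$.
\item Surjectivity: given a target element, I would successively correct it modulo deeper and deeper filtration steps, lifting the symbol at each stage. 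This terminates because the filtrations are finite.
\end{itemize}
Alternatively, both statements follow from the five lemma applied to the short exact sequences $0\to F^{p+1}\to F^p\to\mathrm{gr}^p\to0$.

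For the forward direction of (ii), suppose $0\neq e_1\wedge e_2\in\ker(\Upsilon_{i_\ast\mathscr{L}})$. Let $W=\langle e_1,e_2\rangle$ and choose a basis of $W$ adapted to the induced filtration. This gives $e_1\in F^p\setminus F^{p+1}$ and $e_2\in F^q\setminus F^{q+1}$ with linearly independent symbols $\sigma(e_1),\sigma(e_2)$ in $\mathrm{gr}_F\Ext^1$; this is the standard Gaussian elimination on symbols. Then $e_1\wedge e_2$ lies in $F^{p+q}$ of $\bwed^2\Ext^1$, and its symbol is $\sigma(e_1)\wedge\sigma(e_2)\neq0$, a decomposable element. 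By multiplicativity, $\mathrm{gr}^{p+q}_F(\Upsilon)(\sigma(e_1)\wedge\sigma(e_2))$ is the class of $e_1\cup e_2=0$ in $\mathrm{gr}^{p+q}_F\Ext^2$. Hence $\sigma(e_1)\wedge\sigma(e_2)$ is a non-zero decomposable element of $\ker\mathrm{gr}_F(\Upsilon)$.

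The converse is the main obstacle, because symbols need not lift to kernel elements in general. Here I would use geometry. Under \eqref{eq:Ext*-Betticohomology}, $\mathrm{gr}_F(\Upsilon)$ is the Betti cup product $\bwed^2H^1(Z,\mathbb{C})\to H^2(Z,\mathbb{C})$. A non-zero decomposable element in its kernel therefore means that condition (ii) of Proposition \ref{prop-CdF} fails for $Z$. By that proposition (Catanese and Castelnuovo--de Franchis) this gives a non-constant map $f\colon Z\to C$ with $g(C)\ge2$. Pulling back two independent holomorphic $1$-forms on $C$ produces independent $\omega_1,\omega_2\in H^0(Z,\Omega^1_Z)$ with $\omega_1\wedge\omega_2=0$. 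The key observation is that $H^0(Z,\Omega^1_Z)=\mathrm{gr}^1_F\Ext^1=F^1\Ext^1$ is a genuine subspace of $\Ext^1$, since it is the deepest filtration step. Similarly $F^2\Ext^2=H^0(Z,\Omega^2_Z)$ is a genuine subspace of $\Ext^2$. Because multiplication respects the filtration, the restriction $F^1\times F^1\to F^2$ of the Yoneda product coincides with its graded version, namely the wedge product of forms. Hence $\omega_1\cup\omega_2=\omega_1\wedge\omega_2=0$ in $\Ext^2$, and $\omega_1\wedge\omega_2$ is a non-zero decomposable element of $\ker(\Upsilon_{i_\ast\mathscr{L}})$. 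The only point to check carefully is this identification of the product on the bottom pieces $F^1$ and $F^2$ with the graded product; it follows from the compatibility of the spectral sequence \eqref{eq:local-to-global-spectral-sequence} with Yoneda products.
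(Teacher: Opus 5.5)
\textbf{Verdict: the converse direction of (ii) has a genuine gap, and it is easily repaired.} The rest matches the paper. Your argument for (i), and for passing from a decomposable element of $\ker(\Upsilon_{i_\ast\mathscr{L}})$ to one of $\ker(\mathrm{gr}_F(\Upsilon_{i_\ast\mathscr{L}}))$ via symbols after an adapted change of basis, is correct and is the paper's leading-term argument. Your plan for the converse (Catanese and Castelnuovo--de Franchis, then lifting from the deepest filtration step) is also the paper's.

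The gap is your ``key observation'' that $H^0(Z,\Omega^1_Z)=F^1\Ext^1(i_\ast\mathscr{L},i_\ast\mathscr{L})$ is a subspace. In the local-to-global spectral sequence, $\mathscr{E}xt^q(i_\ast\mathscr{L},i_\ast\mathscr{L})\cong i_\ast\bwed^q N_{Z/X}\cong i_\ast\Omega^q_Z$, and the filtration on the abutment is by the degree of sheaf cohomology. The deepest step $F^k\Ext^k$ is therefore the image of the edge map $H^k(Z,\mathscr{O}_Z)=H^k(X,\mathscr{E}nd(i_\ast\mathscr{L}))\to\Ext^k$. By contrast, $H^0(Z,\Omega^k_Z)$ is a \emph{quotient}, via $\Ext^k\to H^0(X,\mathscr{E}xt^k(i_\ast\mathscr{L},i_\ast\mathscr{L}))$. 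For $k=1$ this is the familiar picture: first-order deformations of $\mathscr{L}$ on the fixed support form the subspace $H^1(Z,\mathscr{O}_Z)$, and $H^0(Z,\Omega^1_Z)\cong H^0(Z,N_{Z/X})$ records how the support moves.

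The displayed formula for $\mathrm{gr}^p_F\Ext^k$ just before the lemma has its indices interchanged, which is likely what misled you. The following sentence, identifying $\mathrm{gr}^2_F(\Upsilon_{i_\ast\mathscr{L}})$ with $\bwed^2H^1(Z,\mathscr{O}_Z)\to H^2(Z,\mathscr{O}_Z)$, is the correct version, and it is the one the paper's proof uses. So $\omega_1,\omega_2$ are not elements of $\Ext^1$. Lifts $e_1,e_2$ only give $e_1\cup e_2\in F^1\Ext^2$, and removing its components in $H^1(Z,\Omega^1_Z)$ and $H^2(Z,\mathscr{O}_Z)$ is exactly the lifting problem you flagged.

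The repair is to conjugate, which is what the paper does when it chooses the element ``in the $(0,2)$-part''. The classes $\bar\omega_1,\bar\omega_2\in H^{0,1}(Z)\cong H^1(Z,\mathscr{O}_Z)$ are linearly independent, and $\bar\omega_1\cup\bar\omega_2=\overline{\omega_1\wedge\omega_2}=0$ in $H^2(Z,\mathscr{O}_Z)$. The map $H^\ast(Z,\mathscr{O}_Z)=\Ext^\ast_Z(\mathscr{L},\mathscr{L})\to\Ext^\ast_X(i_\ast\mathscr{L},i_\ast\mathscr{L})$ induced by the functor $i_\ast$ is a homomorphism of graded algebras. It is injective in degree $1$, and its image in each degree is the deepest filtration step. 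Hence the images of $\bar\omega_1,\bar\omega_2$ are independent in $F^1\Ext^1$, and their Yoneda product is the image of $0$. So $\bar\omega_1\wedge\bar\omega_2$ is a non-zero decomposable element of $\ker(\Upsilon_{i_\ast\mathscr{L}})$. With this substitution your proof becomes the paper's.
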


\begin{proof}
    The first statement is standard filtered linear algebra: injectivity and surjectivity lift from the associated graded. We therefore focus on $\mathrm{(ii)}$. Assume there is a non-zero decomposable element in $\ker(\mathrm{gr}_F(\Upsilon_{i_\ast  \mathscr{L}}))$. By Proposition \ref{prop-CdF} and its proof, one may choose such an element in the $(0,2)$-part, namely in $\ker(\mathrm{gr}_F^2(\Upsilon_{i_\ast  \mathscr{L}}))$, and the latter identifies with $\ker(\Upsilon_{\mathscr{L}})$. Since $F^2\bwed^2 \Ext^1(i_\ast  \mathscr{L}, i_\ast  \mathscr{L})=\bwed^2 F^1 \Ext^1(i_\ast  \mathscr{L}, i_\ast  \mathscr{L})$ is the last step of the induced filtration, this decomposable element lifts canonically to a non-zero decomposable element in $\ker(\Upsilon_{i_\ast  \mathscr{L}})$. Conversely, any non-zero decomposable element of $\ker(\Upsilon_{i_\ast \mathscr{L}})$ has a well-defined leading term with respect to $F^\ast$; after a harmless change of basis in its spanning 2-plane, this leading term is a non-zero decomposable element lying in $\ker(\mathrm{gr}_F(\Upsilon_{i_\ast  \mathscr {L}}))$.
\end{proof}

Combining Theorem \ref{thm:semirigid} and Lemma \ref{lem:Y-vs-gr(Y)}, we can now easily transfer semi-rigidity from line bundles on $Z$ to the corresponding sheaves on $X$:

\begin{proposition}[Prop.\@ \ref{intro:prop-semirigid-lagrangian}]\label{prop:semirigid-lagrangian}
    Under the above assumptions, $\mathscr{L}$ is semi-rigid on $Z$ if and only if $i_\ast  \mathscr{L}$ is semi-rigid on $X$.
\end{proposition}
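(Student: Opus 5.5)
The argument is a chain of equivalences obtained by combining Theorem \ref{thm:semirigid}, Lemma \ref{lem:Y-vs-gr(Y)} and Proposition \ref{prop-CdF}.

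First, Theorem \ref{thm:semirigid} applies to $\mathscr{L}$ on $Z$. Line bundles satisfy Assumption \ref{ass:main-assumption}, as discussed before Proposition \ref{prop-CdF}. So $\mathscr{L}$ is semi-rigid if and only if $\ker(\Upsilon_{\mathscr{L}})\subseteq \bwed^2 H^1(Z,\mathscr{O}_Z)$ contains no non-zero decomposable element. By the proof of Proposition \ref{prop-CdF}, this is condition (iii) for $Z$, and by Catanese it is equivalent to condition (ii). Condition (ii) says that the Betti cup product $\bwed^2 H^1(Z,\mathbb{C})\to H^2(Z,\mathbb{C})$ has no non-zero decomposable element in its kernel: a decomposable $\eta_1\wedge\eta_2\neq 0$ is exactly a pair of linearly independent classes. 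In particular, semi-rigidity of $\mathscr{L}$ does not depend on $\mathscr{L}$.

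Second, $i_\ast\mathscr{L}$ satisfies Assumption \ref{ass:main-assumption}. Formality is part of the standing hypotheses, and smoothness of $\mathrm{Def}_{i_\ast\mathscr{L}}$ follows from \cite[Thm.\@ 8.1]{DonagiMarkman1996Spectral}. Theorem \ref{thm:semirigid} therefore says that $i_\ast\mathscr{L}$ is semi-rigid if and only if $\ker(\Upsilon_{i_\ast\mathscr{L}})$ has no non-zero decomposable element. By Lemma \ref{lem:Y-vs-gr(Y)}(ii), this holds if and only if $\ker(\mathrm{gr}_F(\Upsilon_{i_\ast\mathscr{L}}))$ has none. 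Under \eqref{eq:Ext*-Betticohomology}, $\mathrm{gr}_F(\Upsilon_{i_\ast\mathscr{L}})$ is the Betti cup product $\bwed^2 H^1(Z,\mathbb{C})\to H^2(Z,\mathbb{C})$, so the two chains of equivalences meet and the proposition follows.

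The only delicate point is the identification $\bwed^2 \mathrm{gr}_F \Ext^1 = \mathrm{gr}_F \bwed^2\Ext^1$. We need that a decomposable element of the kernel of the graded map, taken in $\bwed^2 H^1(Z,\mathbb{C})$, corresponds to a decomposable element in the graded sense used in Lemma \ref{lem:Y-vs-gr(Y)}. I would handle this as in that lemma's proof, by reducing to the $(0,2)$-component. A decomposable Betti class in the kernel yields, via Castelnuovo--de Franchis and Catanese, a pencil $Z\to C$ with $g(C)\ge 2$. Pulling back $H^{0,1}(C)$ then gives a decomposable element of $\ker\mathrm{gr}^2_F$, which identifies with $\ker(\Upsilon_{\mathscr{L}})$. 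This matches the equivalences already used, so the step should be routine.
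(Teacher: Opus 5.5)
Your proposal is correct and follows the paper's route. The paper obtains the proposition by combining Theorem \ref{thm:semirigid}, applied to both $\mathscr{L}$ and $i_\ast\mathscr{L}$, with Lemma \ref{lem:Y-vs-gr(Y)}(ii), whose proof uses Proposition \ref{prop-CdF} exactly as you do to pass between the Betti cup-product condition and $\ker(\Upsilon_{\mathscr{L}})$. Your final paragraph is redundant rather than wrong: Lemma \ref{lem:Y-vs-gr(Y)}(ii) is already stated for $\mathrm{gr}_F(\Upsilon_{i_\ast\mathscr{L}})$ as the cup product on $\bwed^2 H^1(Z,\mathbb{C})$, and the reduction to the $(0,2)$-part via an irrational pencil is precisely what its proof does.
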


\begin{example}
    Let $i\colon C\hookrightarrow X$ be the embedding of a smooth curve of genus $g\geq 1$ into a symplectic surface. Proposition \ref{prop:semirigid-lagrangian} shows that $i_\ast  \mathscr{O}_C$ is semi-rigid if and only if $g=1$, which recovers the isotropy condition $0=\vv(i_\ast  \mathscr{O}_C)^2=2g-2$. 
\end{example}

\begin{example}[Albanese primitive Lagrangians]\label{ex:Albanese-primitive-lagrangians}
    Let $i\colon  Z\hookrightarrow X$ be a smooth Lagrangian embedding into a hyper-K\"ahler manifold, and assume that $Z$ is Albanese primitive. Then Propositions \ref{prop-CdF} and \ref{prop:semirigid-lagrangian} imply that, whenever the torsion sheaf $i_\ast  \mathscr{L}$, for $\mathscr{L}\in \Pic^0(Z)$, satisfies Assumption \ref{ass:main-assumption}.(i), it is semi-rigid on $X$. Examples of Albanese primitive Lagrangians include:
        \begin{enumerate}[label={\upshape(\roman*)}]
            \item If $f\colon X\to B$ is a Lagrangian fibration and $Z$ is a smooth fibre, then $Z$ is an abelian variety, hence Albanese primitive.
            \item More generally, smooth components of generic singular fibres of a Lagrangian fibration are Albanese primitive; see \cite[Thm.\@ 1.3]{HwangOguiso2009CharFoliation} for a precise statement. 
        
            A concrete instance comes from Beauville--Mukai systems. Let $(X,h)$ be a polarised K3 surface of genus $g\geq 2$, fix integers $r\geq 2$, $\chi\in \mathbb{Z}$ with $(r, \chi)=1$, and set $\vv\coloneqq (0,rh,\chi)\in H^\ast(X, \mathbb{Z})$. Then the moduli space $M_{\vv}(X,h)$ is a smooth hyper-K\"ahler manifold equipped with the Beauville--Mukai Lagrangian fibration $f\colon M_{\vv}(X,h)\to |rh|$. For a smooth curve $C\in |h|$, the fibre over the non-reduced curve $rC\in |rh|$ is reducible \cite{DonagiEinLazarsfeld1997NilpotentCones}; one component is $Z\coloneqq U_C(r,d)$, the moduli space of stable vector bundles of rank $r$ and degree $d\coloneqq \chi-r(1-g)$ on $C$, viewed as torsion sheaves on $X$. In particular, $Z$ gives a smooth, irregular, Albanese primitive Lagrangian.
            \item Let $X$ be a K3 surface and $C\subseteq X$ be a smooth curve. For $r\geq 2$ the natural embedding $Z\coloneqq\operatorname{Sym}^r C\subseteq \operatorname{Hilb}^r X$ is Lagrangian. Moreover, the cup product $\bwed^k H^1(Z, \mathbb{C})\to H^k(Z, \mathbb{C})$ is injective for $2\leq k \leq r$ \cite[\textsection 6.3]{Macdonald1962SymC}, so $Z$ is Albanese primitive. 
            \item Let $X\coloneqq F(Y)$ be the Fano variety of lines on a smooth cubic fourfold $Y\subseteq \mathbb{P}^5$. The Lagrangian surface $Z\coloneqq F(Y_H)\subseteq X$ parametrising lines contained in a hyperplane section $Y_H\subseteq Y$ is Albanese primitive; indeed, the cup product $\bwed^2 H^1(Z, \mathbb{C})\iso H^2(Z, \mathbb{C})$ is an isomorphism \cite{collino82}.   
        \end{enumerate}
\end{example}

\subsection*{Varieties of lines}
Now we delve deeper into the last example.
Let $Y \subset \mathbb{P}^5$ be a smooth cubic fourfold and write $X\coloneqq F(Y)$ for its Fano variety of lines. It is a hyper-K\"ahler fourfold of $\mathrm{K3}^{[2]}$-type by \cite{BeauvilleDonagi1985}, and it comes with a geometric supply of (possibly singular) Lagrangian subvarieties.

Fix a hyperplane $H \subset \mathbb{P}^5$, and consider the hyperplane section $Y_H\coloneqq Y\cap H$. Let $Z\coloneqq F(Y_H)$ be the surface parametrising lines contained in $Y_H$. 
Then the inclusion $i\colon Z \hookrightarrow X$ is a Lagrangian embedding.

Set $\vv \coloneqq \vv(i_\ast  \mathscr{O}_Z)\in H^\ast(X, \mathbb{Q})$. In \cite{Bottini2024OG10Stable}, it is shown that, for a general cubic fourfold $Y$, there is a distinguished connected component $M^{\circ}_{\vv}(X,h)\subseteq M^s_{\vv}(X,h)$ through $[i_\ast  \mathscr{O}_Z]$, which is hyper-K\"ahler of OG10-type. 

Deformations of the Lagrangian surface $Z=F(Y_H)\subseteq X$ are obtained by varying the hyperplane $H\in (\mathbb{P}^5)^\vee$. Accordingly, the connected component $M^\circ_{\vv}(X,h)$ carries a natural Lagrangian fibration 
    \begin{equation*}
        M^\circ_\vv(X,h)\to (\mathbb{P}^5)^\vee, \quad \mathscr{F}\mapsto \mathrm{supp}(\mathscr{F}).
    \end{equation*}
One can think of it as a compactification of the relative Jacobian of the above family of Lagrangians. In particular, a general point of $M^\circ_{\vv}(X,h)$ is an example of a Lagrangian line bundle on $X$ as discussed above. 

A key input for us is that the sheaves parametrised by $M^\circ_\vv(X,h)$ can be arranged to satisfy Assumption \ref{ass:main-assumption}. Indeed, by \cite[Cor.\@ 6.8]{Bottini2024OG10Stable} one can choose $Y\subseteq \mathbb{P}^5$ so that there exists a derived equivalence 
    \begin{equation}\label{eq:TwistedEquivalence}
        \Db(X)\iso  \Db(X', \theta), \quad \theta\in \Br(X'),  
    \end{equation}
sending every sheaf in $M^\circ_{\vv}(X,h)$ to a $\theta$-twisted hyperholomorphic vector bundle on another hyper-K\"ahler manifold $X'$. In particular, by \cite{MeazziniOnorati2025}, we may choose $Y\subseteq \mathbb{P}^5$ so that the component $M^\circ_{\vv}(X,h)$ parametrises stable sheaves satisfying Assumption \ref{ass:main-assumption}.

\begin{theorem}[Thm.\@ \ref{intro:thm-Fano-surface}]\label{thm:Fano-surface}
    For such a component $M^\circ_{\vv}(X,h)\subseteq M^s_{\vv}(X,h)$, the following hold:
        \begin{enumerate}[label={\upshape(\roman*)}]
            \item $M^\circ_{\vv}(X,h)$ parametrises semi-rigid stable sheaves. 
            \item For all $n\geq 2$ the direct-sum morphism $\Sym^n M^\circ_{\vv}(X,h)\to M_{n\vv}(X,h)$ is the embedding of an irreducible component of dimension $10 n$. 
            \item For all $n\geq 2$, $M_{n\vv}(X,h)$ is reduced and normal along the small diagonal $M^\circ_{\vv}(X,h)\subseteq\Sym^n M^\circ_{\vv}(X,h)$.
        \end{enumerate}
\end{theorem}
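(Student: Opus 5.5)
The plan is to reduce everything to Proposition \ref{prop:ker-Upsilon-zero}: it suffices to show that $\Upsilon_{\mathscr{F}}$ is injective for every $[\mathscr{F}]\in M^\circ_{\vv}(X,h)$. Injectivity gives (i) directly, since a zero kernel contains no decomposable elements, and semi-rigidity then follows from Theorem \ref{thm:semirigid}. It also gives (iii), because Proposition \ref{prop:ker-Upsilon-zero} shows that $\Sym^n M_{\vv}(X,h)\to M_{n\vv}(X,h)$ is an isomorphism near each $[\mathscr{F}^{\oplus n}]$. For (ii), the direct-sum morphism is then an isomorphism onto an open neighbourhood of the small diagonal. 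We combine this with Lemma \ref{lem:local-structure-direct-sum}, which makes it a closed embedding near each point of the diagonal. Since $M^\circ_{\vv}$ is a connected component of $M^s_{\vv}$ and is proper, every point of $\Sym^n M^\circ_{\vv}$ degenerates to the diagonal (under the $\mathbb{G}_m$-rescaling in the local model) and its image is closed. Hence the image is an irreducible closed subset that contains an open subset of $M_{n\vv}$, so it is an irreducible component. Its dimension is $n\dim M^\circ_{\vv}=10n$, because $M^\circ_{\vv}$ is of OG10-type.

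To prove injectivity, I first treat the point $i_\ast\mathscr{O}_Z$. The algebra is formal by the discussion preceding the statement, and the local-to-global spectral sequence degenerates by Example \ref{ex:Atiyah-class-vanishing}, since $\mathscr{O}_Z\in\Pic^0(Z)$. By Lemma \ref{lem:Y-vs-gr(Y)}(i) it is then enough to show that $\mathrm{gr}_F(\Upsilon)\colon \bwed^2 H^1(Z,\mathbb{C})\to H^2(Z,\mathbb{C})$ is injective. Collino's result \cite{collino82} shows that this cup product is an isomorphism for the Fano surface $Z=F(Y_H)$. The dimensions confirm this: $b_1(Z)=10$, $\binom{10}{2}=45=b_2(Z)$, and $\dim\Ext^1=10$ matches $\dim M^\circ_{\vv}$.

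Next I extend injectivity to all of $M^\circ_{\vv}$, where the supports may be singular Lagrangians; this is the main obstacle. Here I would use that $\dim\Ext^1(\mathscr{F},\mathscr{F})=10$ is constant on the smooth component. Since $\mathrm{Def}_{\mathscr{F}}$ is smooth and symplectic, $\dim\Ext^2$ is constant as well: the Euler characteristic $\chi(\mathscr{F},\mathscr{F})$ is fixed, and Serre duality gives $\Ext^4\cong\Ext^0$ and $\Ext^3\cong(\Ext^1)^\vee$, so $\dim \Ext^2=45$ everywhere. The maps $\Upsilon_{\mathscr{F}}$ then form a morphism of vector bundles $\bwed^2\mathcal{E}xt^1\to\mathcal{E}xt^2$ of equal rank $45$ over $M^\circ_{\vv}$, at least étale-locally using a twisted universal family. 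Its degeneracy locus is therefore either empty or a divisor.

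To show it is empty, I would transport the question through the derived equivalence \eqref{eq:TwistedEquivalence} to twisted hyperholomorphic bundles on $X'$. For a hyperholomorphic bundle $E$, Verbitsky's theory identifies the Yoneda algebra in low degrees with the cohomology of a flat deformation. Projective hyperholomorphic bundles deform along twistor lines, and the Ext groups vary in a local system over $M^\circ_{\vv}$. More precisely, Meazzini--Onorati formality together with the hyperholomorphic structure gives a flat structure on $\Ext^\ast$ compatible with Yoneda products. The rank of $\Upsilon$ is then locally constant, and it equals $45$ because it does at $i_\ast\mathscr{O}_Z$. If this flat-structure argument is unavailable, the fallback is as follows. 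The determinant of $\Upsilon$ is a section of a line bundle on the compact hyper-K\"ahler manifold $M^\circ_{\vv}$. I would show this line bundle is trivial, using Serre duality, which makes $\det\bwed^2\mathcal{E}xt^1$ and $\det\mathcal{E}xt^2$ both self-dual up to torsion, together with the fact that $\Pic$ is torsion-free for OG10-type. The section is then a nonzero constant, since it is nonzero at a smooth-support point.
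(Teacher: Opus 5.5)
Your proposal is correct. It follows the paper's reduction, but it reaches the singular-support points by a different route.

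\textbf{Where you agree with the paper.} You reduce everything to Proposition \ref{prop:ker-Upsilon-zero}, so it suffices that $\ker(\Upsilon_{\mathscr F})=0$ on all of $M^\circ_{\vv}(X,h)$. At smooth-support points you argue exactly as the paper does: Collino, degeneration via Example \ref{ex:Atiyah-class-vanishing}, then Lemma \ref{lem:Y-vs-gr(Y)}(i).

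\textbf{Where you differ.} The paper handles sheaves with singular support by citing \cite[Thm.~1.3]{Bottini2024OG10Stable}. You instead propagate injectivity from one point.

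\textbf{The flat-structure idea.} Drop it; it is unsupported. Hyperholomorphic theory moves a bundle along twistor deformations of $X'$, not along the moduli space with $X'$ fixed. Pointwise formality also gives no connection on the Ext groups compatible with Yoneda products.

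\textbf{The determinant fallback.} This is a valid argument, with the following steps.
\begin{enumerate}
\item The fibre dimensions $1,10,45,10,1$ are constant on the component. Base change for the perfect complex $Rp_\ast R\mathcal{H}om(\mathcal U,\mathcal U)$ then makes the relative Ext sheaves vector bundles. This complex is untwisted, so the construction is global, not merely \'etale-local.
\item The Yoneda product becomes a global map $\bwed^2\mathcal{E}xt^1\to\mathcal{E}xt^2$ restricting to $\Upsilon_{\mathscr F}$ on fibres. Its determinant is a section of $\det\mathcal{E}xt^2\otimes(\det\mathcal{E}xt^1)^{-9}$.
\item The factor $\det\mathcal{E}xt^1$ is trivial because $\mathcal{E}xt^1\cong T_{M^\circ}$ and $K_{M^\circ}$ is trivial. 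This, not Serre duality, is the right justification: Serre duality only pairs $\mathcal{E}xt^1$ with $\mathcal{E}xt^3$.
\item Relative Serre duality gives $\mathcal{E}xt^2\cong(\mathcal{E}xt^2)^\vee$, so the line bundle is $2$-torsion.
\item You do not need $\Pic$ to be torsion-free. The square of the section is a constant on the compact connected $M^\circ$, and it is non-zero at $[i_\ast\mathscr O_Z]$.
\end{enumerate}

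\textbf{What each route buys.} Your route is self-contained. It shows that $\Upsilon$ is an isomorphism either at every point of the component or at none, with no analysis of sheaves on singular Lagrangians. However, it relies on the numerical coincidence $\binom{10}{2}=45=\dim\Ext^2$ and on global properties of $M^\circ$ (compactness, trivial canonical class). The paper's external input gives injectivity directly, without these.

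\textbf{A minor point in (ii).} The claim that every point of $\Sym^n M^\circ_{\vv}$ ``degenerates to the diagonal under the $\mathbb{G}_m$-rescaling'' is not meaningful globally, since that rescaling exists only in the local model. You do not need it. The image is closed by properness and irreducible, and it contains the open neighbourhood supplied by Proposition \ref{prop:ker-Upsilon-zero}. Hence it is an irreducible component of dimension $10n$.
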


\begin{proof}
    We apply Proposition \ref{prop:ker-Upsilon-zero}, so we need that every stable sheaf $\mathscr{F}$ in $M^\circ_{\vv}(X,h)$ is semi-rigid and, in fact, satisfies $\ker(\Upsilon_{\mathscr{F}})=0$. 
    
    First, line bundles $\mathscr{L}$ on smooth deformations of $Z$ satisfy $\ker(\Upsilon_{\mathscr{L}})=0$ by Example \ref{ex:Albanese-primitive-lagrangians}.(iv). For topologically trivial line bundles, the local-to-global spectral sequence \eqref{eq:local-to-global-spectral-sequence} degenerates by Example \ref{ex:Atiyah-class-vanishing}, and Lemma \ref{lem:Y-vs-gr(Y)} then upgrades the injectivity of $\Upsilon_{\mathscr{L}}$ on the associated graded to the injectivity of $\Upsilon_{i_\ast  \mathscr{L}}$. More generally, injectivity of $\Upsilon_{\mathscr{F}}$ is proved in  \cite[Thm.\@ 1.3]{Bottini2024OG10Stable}.
\end{proof}

\begin{remark}
    Owing to our reliance on the formality assumption, Theorem \ref{thm:Fano-surface} is currently restricted to those cubic fourfolds $Y \subset \mathbb{P}^5$ that admit the derived equivalence \eqref{eq:TwistedEquivalence}. While these form a dense subset in the moduli space of smooth cubic fourfolds, we expect the formality condition, and consequently the conclusions of the theorem, to hold in general.
\end{remark}

Assume now that the cubic fourfold $Y$ is itself a hyperplane section of a general cubic fivefold $Y \subseteq \widetilde{Y} \subseteq \mathbb{P}^6$.
The variety of planes $Z'\coloneqq F_2(\widetilde{Y})$ is a smooth surface, and if the setting is general enough, it comes with a natural map 
\begin{equation}\label{eq:i_Z}
    j \colon Z' \to X, \quad \Pi \mapsto \Pi \cap  Y. 
\end{equation}
In \cite{IlievManivel2008}, it is shown that $j$ is a Lagrangian immersion, birational onto its image, and the class of the image is computed to be
\begin{equation}\label{eq:63Times}
[j(Z')] = 63[Z]\in H^{4}(X, \mathbb{Z}).
\end{equation}
In \cite{liu2025irreducible}, the authors construct a singular irreducible symplectic variety by compactifying the relative Jacobian of the family of surfaces $Z'$ (relying on the techniques of \cite{sacca2024compactifying}). 
A natural question is whether this compactification admits a modular interpretation analogous to the one given in \cite{Bottini2024OG10Stable} for the LSV variety.

To address this, the first step is to compute the Mukai vector of the pushforward $j_\ast\mathscr{O}_{Z'}$. 
The most direct approach utilises the theory of \emph{atomic} (or \emph{cohomologically $1$-obstructed}) objects, introduced by Beckmann and Markman \cite{Beckmann2025Atomic,Markman2024StableRank1}. Rather than delving into the full definition of atomic objects, it suffices for our purposes to know that both $i_\ast \mathscr{O}_Z$ and $j_\ast \mathscr{O}_{Z'}$ are atomic (cf.\ \cite{Beckmann2025Atomic,Markman2024StableRank1,guo2025atomic}) and to rely on the following characterisation.

Recall that a \emph{Lagrangian immersion} is a finite unramified morphism $i \colon Z \to X$ with Lagrangian image. 

 \begin{theorem}[{{\cite[Theorem 1.8]{Beckmann2025Atomic}, \cite[Theorem 4.10 and Proposition 4.11]{guo2025atomic}}}]\label{thm:AtomicAndMukaiVector}
     Let $i \colon Z \to X$ be a Lagrangian immersion in a hyper-K\"ahler manifold $X$. 
     Then $i_\ast  \mathscr{O}_Z$ is atomic if and only if 
    \begin{enumerate}[label={\upshape(\roman*)}] 
        \item The pullback $i^\ast \colon H^2(X,\mathbb{Q}) \to H^2(Z,\mathbb{Q})$ has rank $1$.
        \item The first Chern class $c_1(Z)$ lies in the image $\operatorname{Im}i^\ast$. 
    \end{enumerate}
    Moreover, if $\lambda \in \NS(X)_{\mathbb{Q}}$ is a generator of $(\ker i^\ast)^{\perp}$ such that $c_1(Z) = i^\ast\lambda$, we have 
    \[
        \vv(i_\ast  \mathscr{O}_Z) = \exp(\textstyle \frac{\lambda}{2})i_\ast [Z]\in H^\ast(X, \mathbb{Q}).
    \]
 \end{theorem}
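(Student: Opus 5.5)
The plan splits into two parts: the formula for $\vv(i_\ast\mathscr{O}_Z)$, which is a direct Grothendieck--Riemann--Roch computation, and the atomicity criterion, which I would derive from the obstruction-theoretic characterisation of atomic objects.

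\emph{Mukai vector.} By Grothendieck--Riemann--Roch for the finite morphism $i$, we have $\mathrm{ch}(i_\ast\mathscr{O}_Z)\,\mathrm{td}(X)=i_\ast\mathrm{td}(Z)$. Hence
\[
\vv(i_\ast\mathscr{O}_Z)=\mathrm{ch}(i_\ast\mathscr{O}_Z)\sqrt{\mathrm{td}(X)}=i_\ast\bigl(\mathrm{td}(T_Z)\,i^\ast\mathrm{td}(X)^{-1/2}\bigr).
\]
Since $i$ is a Lagrangian immersion, the symplectic form gives an isomorphism $N_{Z/X}\cong\Omega_Z$. This yields a class-level splitting $i^\ast T_X\sim T_Z\oplus T_Z^\vee$, so that
\[
\mathrm{td}(T_Z)\,\bigl(\mathrm{td}(T_Z)\,\mathrm{td}(T_Z^\vee)\bigr)^{-1/2}=\bigl(\mathrm{td}(T_Z)/\mathrm{td}(T_Z^\vee)\bigr)^{1/2}=\exp\bigl(c_1(Z)/2\bigr),
\]
using the standard identity $\mathrm{td}(E)/\mathrm{td}(E^\vee)=e^{c_1(E)}$. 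Writing $c_1(Z)=i^\ast\lambda$, the projection formula then gives $\vv(i_\ast\mathscr{O}_Z)=\exp(\lambda/2)\,i_\ast[Z]$. This part is routine.

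\emph{Atomicity criterion.} I would use Beckmann's characterisation: an object $E$ is atomic (1-obstructed) if and only if the obstruction map
\[
H^2(X,\mathbb{Q})\cong HT^2\text{-directions}\to \Ext^2(E,E),
\]
given by contracting with the Atiyah class and restricted to the relevant Mukai-lattice directions, has kernel of codimension exactly $1$. Equivalently, the annihilator of $\vv(E)$ in the LLV algebra has the expected size. For $E=i_\ast\mathscr{O}_Z$, I would follow the Buchweitz--Flenner semiregularity picture. For a first-order deformation of $X$ in direction $\xi\in H^1(X,T_X)\cong H^{1,1}(X)$, corresponding via $\sigma$ to a class $\alpha$, the obstruction to deforming $Z$ (together with $\mathscr{O}_Z$) is governed by $i^\ast\alpha\in H^2(Z)$. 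The extra directions in the extended Mukai lattice, namely the $B$-field and the $\mathbb{Q}e\oplus\mathbb{Q}f$ part, pair against $c_1(Z)$ through the twist by $\exp(\lambda/2)$ computed above.

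The codimension-$1$ condition should then translate into two requirements. First, $\ker(i^\ast)\subseteq H^2(X,\mathbb{Q})$ must have codimension $1$, which is condition (i). Second, the shift by $c_1(Z)$ must be absorbable by a multiple of the generator, so that no additional obstruction direction appears; this is condition (ii). Concretely, I would test the LLV annihilator of $\exp(\lambda/2)\,i_\ast[Z]$: conjugating by $\exp(\lambda/2)$ reduces the question to the annihilator of $i_\ast[Z]$, which is controlled by $\ker i^\ast$.

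\emph{Main obstacle.} The hardest step is the converse direction, showing that atomicity forces (ii). If $c_1(Z)\notin\operatorname{Im} i^\ast$, then $\vv$ is not of the form $\exp(\mu)\,i_\ast[Z]$ for any $\mu\in H^2(X)$. I would need to show that its LLV orbit has the wrong dimension, i.e.\ that the annihilator drops. I would try first to compute the action of the $\mathfrak{so}$-elements $e_\mu$ and $e_\mu^\vee$ on $\vv$ degree by degree, using the Lagrangian condition $\sigma|_Z=0$ to kill cross terms.
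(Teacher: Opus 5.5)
The paper does not prove this statement; it quotes it from the cited works, so your argument has to stand on its own. The Mukai-vector half does. $N_{Z/X}\cong\Omega_Z$, the identity $\mathrm{td}(E)/\mathrm{td}(E^\vee)=e^{c_1(E)}$ and the projection formula give $\vv(i_\ast\mathscr{O}_Z)=i_\ast(e^{c_1(Z)/2})=e^{\lambda/2}\,i_\ast[Z]$, and this uses only $c_1(Z)=i^\ast\lambda$.

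The atomicity criterion, however, is not proved.

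\emph{The obstruction-map route.} You start from an asserted equivalence between atomicity, a rank-one obstruction map, and an annihilator ``of the expected size''. Atomicity means $\mathrm{Ann}(\vv)=\mathrm{Ann}(\tilde v)$ in the LLV algebra for some $0\neq\tilde v\in\tilde H(X,\mathbb{Q})=\mathbb{Q}\alpha\oplus H^2(X,\mathbb{Q})\oplus\mathbb{Q}\beta$. The general result you can quote is atomic $\Rightarrow$ 1-obstructed; you use it as an equivalence without justification, so this route can at best give the ``only if'' direction. Even then you must compute the obstruction map of $i_\ast\mathscr{O}_Z$ on $HT^2(X)=H^0(\bwed^2T_X)\oplus H^1(T_X)\oplus H^2(\mathscr{O}_X)$. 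This is a $b_2$-dimensional space with no ``$B$-field'' or ``$\mathbb{Q}e\oplus\mathbb{Q}f$'' directions. The $H^1(T_X)$-part recovers $i^\ast$ on $H^{1,1}(X)$ and the $H^2(\mathscr{O}_X)$-part vanishes. So condition (ii) can only come from the image of the Poisson direction $H^0(\bwed^2T_X)$, which must be identified with a class governed by $\tfrac12 c_1(Z)$ (a quantisation-type obstruction). Your sketch asserts the outcome of this computation without doing it.

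\emph{The ``if'' direction via LLV.} Your reduction is the right one: $\exp(e_{\lambda/2})$ lies in the LLV group, so it suffices that $i_\ast[Z]$ is atomic. But ``controlled by $\ker i^\ast$'' is the whole content. Write $\dim X=2n$, fix a K\"ahler class $\omega$, and let $\lambda_0$ span $(\ker i^\ast)^\perp$. You must show $\mathrm{Ann}(i_\ast[Z])=\mathrm{Ann}(\lambda_0)$, which requires three things.
(a) $\mu\cdot i_\ast[Z]=i_\ast i^\ast\mu=0$ only if $i^\ast\mu=0$. This is not formal, since $i_\ast$ is far from injective on $H^2(Z)$ (already $b_2(F(Y_H))=45$). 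It follows from Hodge--Riemann on $Z$ for $i^\ast\omega$ after pairing with $\omega^{n-1}$ and $\mu\,\omega^{n-2}$.
(b) $\Lambda_\mu i_\ast[Z]=0$ for $\mu\in\ker i^\ast$, by the weight-zero $\mathfrak{sl}_2$ argument.
(c) Equality rather than inclusion: maximality of $\mathfrak{so}(\lambda_0^\perp)$ when $q(\lambda_0)\neq0$, and a separate argument when $\lambda_0$ is isotropic.

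\emph{The ``only if'' direction.} You leave this open, and ``the orbit has the wrong dimension'' does not supply the two steps it needs.

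First, a normal form. Use that $\vv$ lives in degrees $\geq 2n$ with lowest term $i_\ast[Z]$ and $\omega\cdot i_\ast[Z]\neq0$ to show that $\tilde v$ has no $\alpha$-component and is not a multiple of $\beta$. Then $\tilde v=\exp(e_\mu)\gamma$ with $\mu,\gamma\in H^2(X,\mathbb{Q})$, so $\exp(-e_\mu)\vv$ is killed by the grading operator and hence equals $i_\ast[Z]$. Combined with (a), this gives $\ker i^\ast=\gamma^\perp$, which is (i).

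Second, $i_\ast(e^{c_1(Z)/2})=i_\ast(e^{i^\ast\mu})$ does not by itself give $c_1(Z)=2i^\ast\mu$, again because $i_\ast$ is not injective. Set $\kappa\coloneqq\tfrac12 c_1(Z)-i^\ast\mu$.
The degree-$(2n+2)$ part gives $i_\ast\kappa=0$, so $\kappa$ is $i^\ast\omega$-primitive.
The degree-$(2n+4)$ part, paired with $\omega^{n-2}$, gives $\int_Z\kappa^2(i^\ast\omega)^{n-2}=0$.
For $n\geq2$, Hodge--Riemann then forces $\kappa=0$.

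Without these steps, neither implication of the criterion is established.
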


\begin{lemma}[{{\cite{Beckmann2025Atomic,Markman2024StableRank1}}}]\label{lem:atomicity3fold}
    The sheaf $i_\ast \mathscr{O}_Z$ is atomic, and we have 
    $$
     \vv(i_\ast  \mathscr{O}_Z) = \exp(-h/2) [Z]\in H^\ast(X, \mathbb{Q}).$$
\end{lemma}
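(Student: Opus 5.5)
The plan is to apply Theorem \ref{thm:AtomicAndMukaiVector} to the Lagrangian embedding $i\colon Z=F(Y_H)\hookrightarrow X=F(Y)$. This leaves three things to do. First, show that $i^\ast\colon H^2(X,\mathbb{Q})\to H^2(Z,\mathbb{Q})$ has rank $1$. Second, identify a class $\lambda$ spanning $(\ker i^\ast)^\perp$ with $c_1(Z)=i^\ast\lambda$. Third, check that $\lambda=-h$, where $h$ is the Plücker polarisation. The Mukai vector formula $\vv(i_\ast\mathscr{O}_Z)=\exp(-h/2)[Z]$ then follows directly.

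For the rank condition, I would use the cylinder (Abel–Jacobi) map. By Beauville–Donagi, the incidence correspondence $P\subseteq F(Y)\times Y$ induces an isomorphism of Hodge structures
\[
\alpha\colon H^4(Y,\mathbb{Q})\iso H^2(X,\mathbb{Q}),
\]
and $\alpha(h_Y^2)$ is proportional to the Plücker class $h$. The restriction of $\alpha(\beta)$ to $Z$ is computed by the same correspondence for $Y_H$, namely by $\beta|_{Y_H}\in H^4(Y_H,\mathbb{Q})$ pushed through the cylinder map $H^4(Y_H)\to H^2(F(Y_H))$. Now $H^4(Y_H,\mathbb{Q})$ is a single Lefschetz line, because $Y_H$ is a smooth cubic threefold, so $H^4(Y_H,\mathbb{Q})\cong\mathbb{Q}$. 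Hence the image of $i^\ast$ is spanned by $h|_Z$, which is nonzero since $h$ is ample. This gives rank $1$, with $(\ker i^\ast)^\perp$ spanned by $h$ under the Beauville–Bogomolov form.

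The key geometric input is that the primitive part $H^4(Y)_{\mathrm{prim}}=h_Y^\perp$ restricts to zero on $Y_H$. An alternative route is to observe that $i^\ast$ kills the primitive classes $h^\perp\subseteq H^2(X)$, because $[Z]$ is proportional to $h^2-c_2(X)/\dots$. This orthogonality is the step I expect to be the main obstacle to write cleanly, so I will use the cylinder-map argument, which is cleaner.

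For the second condition, Lagrangianity gives $N_{Z/X}\cong\Omega^1_Z$, so $K_Z=\det\Omega_Z$ satisfies $2K_Z=\det N = K_Z$-twist data. More simply, the classical Clemens–Griffiths computation says that for the Fano surface of lines on a cubic threefold, $K_Z$ equals the restriction of the Plücker class: $K_Z=\mathscr{O}(1)|_Z$, i.e.\ $K_Z=h|_Z$. Thus $c_1(Z)=-h|_Z=i^\ast(-h)$, and $\lambda=-h$ generates $(\ker i^\ast)^\perp$. Theorem \ref{thm:AtomicAndMukaiVector} then gives atomicity, together with $\vv(i_\ast\mathscr{O}_Z)=\exp(-h/2)\,i_\ast[Z]$, which we write as $\exp(-h/2)[Z]$.
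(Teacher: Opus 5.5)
Your argument is correct, but for condition (i) it takes a genuinely different route from the paper.

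\textbf{Condition (i): the two routes.} The paper specialises. Since the statement is cohomological, it assumes $Y$ very general, so that $\NS(X)_{\mathbb{Q}}=\mathbb{Q}h$. Lagrangianity then forces $i^\ast$ to kill the transcendental part $T(X)_{\mathbb{Q}}=h^\perp$. This gives rank one and, at the same time, $\ker i^\ast=h^\perp$, i.e.\ $(\ker i^\ast)^\perp=\mathbb{Q}h$.

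Your cylinder-map argument uses base change of the universal line to $Z$, so that $i^\ast\circ\alpha$ factors through $H^4(Y_H,\mathbb{Q})\cong\mathbb{Q}$. It works directly for every smooth $Y$ with smooth $Y_H$, with no deformation step. This is a real gain, since the lemma is ultimately applied to special cubics admitting the twisted derived equivalence.

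\textbf{What you still need to justify.} The price is that $(\ker i^\ast)^\perp=\mathbb{Q}h$, which you only assert, is no longer automatic. You need the Beauville--Donagi statement that $\alpha$ maps $H^4(Y,\mathbb{Q})_{\mathrm{prim}}$ onto $h^\perp\subseteq H^2(X,\mathbb{Q})$. Combine it with your remark that primitive classes restrict to zero on $Y_H$: indeed $\beta\cdot h=0$, and the Gysin map $H^4(Y_H,\mathbb{Q})\to H^6(Y,\mathbb{Q})$ is injective. Then $h^\perp\subseteq\ker i^\ast$, and equality follows because both are hyperplanes.

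\textbf{Condition (ii).} Citing Clemens--Griffiths for $K_Z\cong\mathscr{O}(1)|_Z$ is equivalent to the paper's adjunction argument. There, $Z$ is the zero locus of a regular section of $\mathscr{S}^\vee|_X$ and $K_X$ is trivial.

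You should, however, delete the sentence about Lagrangianity. The isomorphism $N_{Z/X}\cong\Omega^1_Z$ gives $\det N_{Z/X}\cong K_Z$, not $2K_Z$. That is just adjunction with $K_X=0$ and says nothing about $K_Z$.
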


\begin{proof}
    We want to verify the conditions in Theorem \ref{thm:AtomicAndMukaiVector}. Condition (i) is easily checked as follows. 
    Because it is a cohomological statement, we may assume $Y$ to be very general, so that $\mathrm{NS}(X) = \mathbb{Z}.h$.
    Since $Z$ is an immersed Lagrangian subvariety, the restriction map 
    \[ 
    i^\ast \colon H^2(X,\mathbb{Q}) \to H^2(Z,\mathbb{Q})
    \]
    vanishes on the transcendental part $T(X)_{\mathbb{Q}} \subset H^2(X,\mathbb{Q})$, so only the Néron–Severi contributes to the image.
    Since $\NS(X)_{\mathbb{Q}}$ is $1$-dimensional and generated by an ample class, the image $\operatorname{Im}i^\ast$ is $1$-dimensional.
    
    To compute the Mukai vector, notice that $Z$ is cut out by a regular section of $\mathscr{S}^{\vee}$, where $\mathscr{S}$ denotes the restriction to $X$ of the rank $2$ tautological bundle on $\mathrm{Gr}(2,6)$. The adjunction formula gives 
\begin{equation}\label{eq:CanonicalOfHyperplane}
    c_1(Z) = - i^\ast h \in H^2(Z,\mathbb{Z}). 
\end{equation}
where $h \coloneqq \det(\mathscr{S}^{\vee}) \in \mathrm{NS}(X)$ is the Pl\"ucker polarisation. 
The Mukai vector is then given by Theorem \ref{thm:AtomicAndMukaiVector}.
\end{proof}

The following is a slight refinement of  \cite[Proposition 7.5]{guo2025atomic}.

\begin{lemma}\label{lem:AtomicityOfFivefold}
    The sheaf $j_\ast\mathscr{O}_{Z'}$ is atomic, and we have 
    \[
    \vv(j_\ast\mathscr{O}_{Z'}) = \exp(-3h/2) [j(Z')]\in H^\ast(X, \mathbb{Q}). 
    \]
\end{lemma}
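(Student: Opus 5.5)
We will apply Theorem \ref{thm:AtomicAndMukaiVector} to the Lagrangian immersion $j\colon Z'\to X$ of \cite{IlievManivel2008}, mirroring the proof of Lemma \ref{lem:atomicity3fold}. Two conditions must be checked, and then $c_1(Z')$ must be identified as a multiple of $j^\ast h$.

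Condition (i) goes exactly as before. Atomicity and the Mukai vector are cohomological and hence deformation invariant in the family of pairs $(Y\subseteq\widetilde Y)$. We may therefore take $Y$ very general, so that $\NS(X)_{\mathbb Q}=\mathbb Q h$. Because $j$ is a Lagrangian immersion, $j^\ast$ kills the transcendental lattice $T(X)_{\mathbb Q}$, since the restriction of $\sigma$ vanishes and $T(X)$ is the smallest Hodge substructure containing $\sigma$. Hence $\operatorname{Im} j^\ast$ is spanned by $j^\ast h$. This class is non-zero because $h$ is ample and $j$ is finite, so $j^\ast h$ is ample on $Z'$. Thus the rank is $1$, and $(\ker j^\ast)^\perp=\mathbb Q h$.

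For condition (ii) and the Mukai vector, the key step is to show $c_1(Z')=-3\,j^\ast h$, equivalently $K_{Z'}=3\,j^\ast h$. The plan is as follows.
\begin{itemize}
\item $F_2(\widetilde Y)\subseteq \Gr(3,7)$ is the zero locus of a regular section of $\Sym^3\mathscr U^\vee$, where $\mathscr U$ is the rank $3$ tautological bundle; the expected dimension is $12-10=2$, which gives regularity for general $\widetilde Y$.
\item Adjunction then gives $K_{Z'}=\bigl(K_{\Gr(3,7)}+c_1(\Sym^3\mathscr U^\vee)\bigr)|_{Z'}=(-7+10)\,\sigma_1|_{Z'}=3\,\sigma_1|_{Z'}$, using $c_1(\Sym^3\mathscr U^\vee)=\binom{5}{4}\cdot 2\,\sigma_1/\!\ldots$. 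Concretely, $c_1(\Sym^k E)=\binom{k+r-1}{r}c_1(E)$ for $r=\rk E$, which gives $\binom{5}{3}=10$.
\item It remains to compare $\sigma_1|_{Z'}$ with $j^\ast h$. The map $j$ sends a plane $\Pi=\mathbb P(U)$ to the line $\mathbb P(U\cap W)$, where $W$ is the $6$-dimensional space with $\mathbb P(W)=H\supseteq Y$. There is an exact sequence $0\to U\cap W\to U\to \mathbb C^7/W\to 0$. Writing $j^\ast\mathscr S$ for the restriction, we get $\det U=\det(j^\ast\mathscr S)\otimes\mathscr O$ with trivial quotient, and therefore $\sigma_1|_{Z'}=j^\ast h$.
\end{itemize}
This yields $c_1(Z')=j^\ast(-3h)$. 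Theorem \ref{thm:AtomicAndMukaiVector} then gives atomicity and $\vv(j_\ast\mathscr O_{Z'})=\exp(-3h/2)\,j_\ast[Z']$, with $j_\ast[Z']=[j(Z')]$ since $j$ is birational onto its image.

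The main obstacle is this last identification. One must make sure that, for general $\widetilde Y$, no plane in $\widetilde Y$ lies inside $H$, so that $U\cap W$ has dimension $2$ everywhere. This holds because $F_2(Y)=\emptyset$ for a general cubic fourfold, and it is also what makes $j$ a morphism. One must also verify that the rank-$1$ quotient $\mathbb C^7/W$ is really trivial as a bundle on $Z'$. Regularity of the defining section, giving $Z'$ smooth of the expected dimension, is quoted from \cite{IlievManivel2008}.
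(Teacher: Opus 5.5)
Your proposal is correct and follows the paper's proof. Condition (i) is handled by taking $Y$ very general. Adjunction for the zero locus of a section of $\Sym^3\mathscr{U}^\vee$ on $\Gr(3,7)$ gives $K_{Z'}=3\,\sigma_1|_{Z'}$. The fact that $Z'$ misses $\Gr(3,6)$ (because $Y$ contains no plane) identifies $\sigma_1|_{Z'}$ with $j^\ast h$, and Theorem \ref{thm:AtomicAndMukaiVector} finishes the argument.

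Your tautological sequence $0\to U\cap W\to U\to \mathbb{C}^7/W\to 0$ is just the bundle-theoretic form of the paper's remark that the intersection map is a linear projection in Pl\"ucker coordinates. The quotient there is a constant line, so no further triviality check is needed. The garbled first formula for $c_1(\Sym^3\mathscr{U}^\vee)$ should be deleted in favour of $c_1(\Sym^3\mathscr{U}^\vee)=10\,\sigma_1$.
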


\begin{proof}
    We argue as in the above lemma. 
    Condition (i) is checked in the same way by assuming $Y$ to be very general. For (ii), we compute the first Chern class using the adjunction formula. 
    Let $h'$ be the Pl\"ucker polarisation on $\mathrm{Gr}(3,7)$.
    The canonical bundle of $\mathrm{Gr}(3,7)$ is $\omega_{\mathrm{Gr}(3,7)} \cong \mathscr{O}_{\mathrm{Gr}(3,7)}(-7h')$.
    The surface $Z' \subseteq \mathrm{Gr}(3,7)$ is a section of the rank $10$ bundle $\Sym^3\mathscr{S}^{\vee}$, where $\mathscr{S}$ is the rank $3$ tautological bundle on $\mathrm{Gr}(3,7)$. 
    Its determinant is $\mathscr{O}_{\mathrm{Gr}(3,7)}(10h')$, therefore
    \begin{equation}\label{eq:CanonicalOfSurface}
        \omega_{Z'} \cong \mathscr{O}_{Z'}(3h')\in \Pic(Z').
    \end{equation}

    If $Y$ is general enough, the surface $Z' \subset \mathrm{Gr}(3,7)$ does not intersect $\Gr(3,6)$, as otherwise $Y$ would contain a plane. 
    Therefore, we have a commutative diagram
        \[\begin{tikzcd}
	        {Z'} & {X} \\
	        {\mathrm{Gr}(3,7)\setminus\mathrm{Gr}(3,6)} & {\mathrm{Gr}(2,6)}
	        \arrow["{j}", from=1-1, to=1-2]
	        \arrow[hook, from=1-1, to=2-1]
	        \arrow[hook, from=1-2, to=2-2]
	        \arrow["\pi", from=2-1, to=2-2]
        \end{tikzcd}\]
    where $\pi$ is given by intersection with a $6$-dimensional space. 
    The map $\pi$ is a linear projection under the Pl\"ucker embedding, therefore $\pi^\ast(h) = h'$.   
    Combining this with \eqref{eq:CanonicalOfSurface}, it follows that $c_1(Z') = j^\ast(-3h)$.
    Using Theorem \ref{thm:AtomicAndMukaiVector}, we deduce $\vv(j_\ast \mathscr{O}_{Z'}) = \exp(-3h/2)[j(Z')]$.
\end{proof}

\begin{corollary}\label{cor:ReducibilityFano}
   If the cubic $Y\subseteq \mathbb{P}^5$ is chosen as in Theorem \ref{thm:Fano-surface}, then the moduli space $M_{63\vv}(X,h)$ is reducible.  
\end{corollary}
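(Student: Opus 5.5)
We will exhibit two distinct irreducible components of $M_{63\vv}(X,h)$ of different dimensions. The first is the split component: by Theorem~\ref{thm:Fano-surface}(ii) with $n=63$, the direct-sum morphism $\Sym^{63}M^\circ_{\vv}(X,h)\to M_{63\vv}(X,h)$ embeds an irreducible component of dimension $10\cdot 63=630$. Every point of this component is strictly polystable, since its image consists of direct sums of $63$ sheaves.

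The second component passes through $[j_\ast\mathscr{O}_{Z'}]$. We first check that this point lies in $M_{63\vv}(X,h)$, which amounts to matching Mukai vectors. By Lemmas~\ref{lem:atomicity3fold} and~\ref{lem:AtomicityOfFivefold} together with \eqref{eq:63Times},
\[
\vv(j_\ast\mathscr{O}_{Z'})=\exp(-3h/2)\,[j(Z')]=63\exp(-3h/2)\,[Z].
\]
This does not literally equal $63\exp(-h/2)[Z]$. The discrepancy has to be absorbed, either by twisting $j_\ast\mathscr{O}_{Z'}$ by the line bundle $\mathscr{O}_X(h)$, or by replacing $\vv$ with $\vv(i_\ast\mathscr{O}_Z(-h))$. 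We would use the relation $\exp(-3h/2)=\exp(-h)\exp(-h/2)$ and the projection formula, and note that twisting by $\mathscr{O}_X(h)$ is an isomorphism of moduli spaces preserving stability. This bookkeeping is the step we expect to need the most care; we would first try to show that the statement in the introduction implicitly uses the twisted sheaf $j_\ast\mathscr{O}_{Z'}\otimes\mathscr{O}_X(h)$, or equivalently $\vv(i_\ast\mathscr{O}_Z(-h))$. The semi-rigidity results are insensitive to this twist.

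Next we show that $j_\ast\mathscr{O}_{Z'}$ is stable. Since $j$ is finite and birational onto its image, $j_\ast\mathscr{O}_{Z'}$ is pure of dimension $2$. Its support $j(Z')$ is irreducible with multiplicity one, so any subsheaf with the same Hilbert polynomial slope behaviour is either zero or of full rank on the support. Because $\mathscr{O}_{Z'}$ is a rank-one sheaf on an integral surface, the sheaf is stable for every polarisation. We then use the fact that a Lagrangian immersion has smooth symplectic deformations, by \cite{DonagiMarkman1996Spectral} or the atomic/Lagrangian results cited, with $\dim\Ext^1=42$ as stated in the introduction. This makes $[j_\ast\mathscr{O}_{Z'}]$ a point of $M^s_{63\vv}(X,h)$ lying on a component of dimension $42$; alternatively we would compute the dimension via $b_1(Z')$ and $h^{0}(N)$ through the local-to-global spectral sequence.

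Finally, the two components are distinct. The first consists of strictly polystable points and has dimension $630$, while the second contains a stable point and has dimension $42\neq 630$. A $42$-dimensional irreducible closed subset cannot contain a point lying only on a $630$-dimensional component of smooth type, so $[j_\ast\mathscr{O}_{Z'}]$ lies on a component different from the split one. Hence $M_{63\vv}(X,h)$ is reducible.
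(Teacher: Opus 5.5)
Your proposal is correct and follows the paper's proof. The paper also uses the $630$-dimensional split component from Theorem~\ref{thm:Fano-surface} and exactly your twist $j_\ast\mathscr{O}_{Z'}\otimes\mathscr{O}_X(h)$, so that Lemmas~\ref{lem:atomicity3fold} and~\ref{lem:AtomicityOfFivefold} give $\exp(h)\cdot 63\exp(-3h/2)[Z]=63\vv$. It likewise shows stability because the sheaf has rank one on its integral support, and it places the point on a different component via the $42$-dimensional deformation space.

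A few clean-ups. Carry the twist through your later steps, since the untwisted $j_\ast\mathscr{O}_{Z'}$ does not lie in $M_{63\vv}(X,h)$. Cite \cite{guo2025atomic} rather than \cite{DonagiMarkman1996Spectral} for the $42$-dimensional count, because the latter concerns embedded Lagrangians. Your stable-versus-strictly-polystable observation already gives distinctness without any dimension count.
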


\begin{proof}
By Theorem \ref{thm:Fano-surface}, the direct-sum morphism $\Sym^{63}M^\circ_{\vv}(X,h)\to M_{63\vv}(X,h)$ is the embedding of an irreducible component of dimension $630$.
On the other hand, the sheaf $j_\ast \mathscr{O}_{Z'}(h) \coloneqq j_\ast \mathscr{O}_{Z'} \otimes \mathscr{O}_X(h)$ is torsion-free of rank $1$ on its integral support, therefore it is stable with respect to any polarisation. 
Since $[j(Z')] = 63[Z] \in H^4(X,\mathbb{Z})$, Lemmas \ref{lem:atomicity3fold} and \ref{lem:AtomicityOfFivefold} imply that
    \begin{equation}
        \vv(j_\ast \mathscr{O}_{Z'}(h))=63 \vv(i_\ast  \mathscr{O}_Z)\in H^\ast(X, \mathbb{Q}).
    \end{equation}
Moreover, $j_\ast\mathscr{O}_{Z'}(h)$ has a $42$-dimensional deformation space by  \cite[Corollary 7.6]{guo2025atomic}. Hence it lies on another irreducible component of $M_{63\vv}(X,h)$ different from the split component. 
\end{proof}

\begin{remark}
   The 42-dimensional component $M^{\dagger}_{63\vv}(X,h)$ singled out above is birational to the irreducible symplectic variety constructed in \cite{liu2025irreducible}. However, it remains an open question whether these two varieties are in fact isomorphic. A related and intriguing problem is to describe the boundary points of this component, specifically, whether they parametrise stable or strictly polystable sheaves.
   A more approachable question is whether the component $M^{\dagger}_{63\vv}(X,h)$ and $\Sym^{63} M^\circ_{\vv}(X,h)$ intersect. 
   If they do intersect, it follows from Theorem \ref{thm:Fano-surface} that the intersection is disjoint from the small diagonal of $\Sym^{63} M^\circ_{\vv}(X,h)$.
\end{remark}

\pagestyle{plain}
\bibliographystyle{alpha}
\bibliography{references}
\end{document}